\theoremstyle{definition}
\newtheorem{defi}{\indent\bf Definition}[section]
\newtheorem{thm}{\indent\bf Theorem}[section] 
\newtheorem{lem}[thm]{\indent\bf Lemma}     
\newtheorem{cor}[thm]{\indent\bf Corollary}   
\newtheorem{prop}[thm]{\indent\bf Proposition} 
\newtheorem{remark}[thm]{\indent\bf Remark} 
\newtheorem{obs}[thm]{\indent\bf Observation}
\newtheorem{fact}[thm]{\indent\bf Fact}
\def\Re{\text{Re\,}} 
\def\Im{\text{Im\,}} 
\def\coloneqq{\mathrel{\mathop:}=}
\newcommand{\C}{\mathbb{C}}
\newcommand{\Q}{\mathbb{Q}}
\newcommand{\R}{\mathbb{R}}
\newcommand{\Z}{\mathbb{Z}}
\renewcommand{\H}{\mathbb{H}}
\newcommand{\del}{{\partial}}
\newcommand{\delbar}{\overline{\partial}}
\newcommand{\eps}{\epsilon}
\newcommand{\dist}{{\rm{dist}}}
\newcommand{\Ker}{{\rm{Ker}\,}}
\renewcommand{\i}{\sqrt{-1}}
\renewcommand{\tilde}{\widetilde}
\renewcommand{\bar}{\overline}
\renewcommand{\hat}{\widehat}
\renewcommand{\eps}{\varepsilon}
\renewcommand{\phi}{\varphi}
\begin{document}
\pagestyle{plain}

\title[On Bott--Chern and Aeppli cohomologies of two-dimensional toroidal groups]{On Bott--Chern and Aeppli cohomologies of two-dimensional toroidal groups}
\author[Jinichiro Tanaka]{Jinichiro Tanaka$^{1}$}
\address{
$^{1}$ Department of Mathematics \\
Graduate School of Science \\
Osaka Metropolitan University \\
3-3-138 Sugimoto \\
Osaka 558-8585 \\
Japan 
}
\email{sw23876x@st.omu.ac.jp}

\renewcommand{\thefootnote}{\fnsymbol{footnote}}
\footnote[0]{ 
2020 \textit{Mathematics Subject Classification}.
32L10, 53C55
}
\footnote[0]{ 
\textit{Key words and phrases}.
Toroidal Groups, Bott--Chern Cohomology, $\del\delbar$-lemma
}
\renewcommand{\thefootnote}{\arabic{footnote}}


\begin{abstract} 
A toroidal group is a generalization of a complex torus, and is obtained as the quotient of the complex Euclidean space $\mathbb{C}^n$ by a discrete subgroup. 
Toroidal groups with finite-dimensional cohomology, called theta toroidal groups, are known to exhibit behavior analogous to that of complex tori. 
We compute Bott--Chern and Aeppli cohomologies for two-dimensional non-compact theta toroidal groups. 
\end{abstract}

\maketitle

\begin{section}{Introduction}\label{intro}
In this paper, we compute Bott--Chern and Aeppli cohomohologies for the two-dimensional non-compact theta toroidal groups. 
A {\it toroidal group} is a connected Abelian complex Lie group which has no non-constant holomorphic function. 
Every toroidal group is a complex manifold obtained as the quotient $\C^n/\Lambda$ of the complex Euclidian space $\C^n$ by a discrete subgroup $\Lambda \subset \C^n$. 
A complex torus is an example of compact toroidal groups. 

If all the cohomologies $H^{p,q}(X, \mathcal{O})$ are finite dimensional for a toroidal group $X$, $X$ is called a {\it theta toroidal group}; otherwise, $X$ is said to be a {\it wild toroidal group}. 
It is known that $H^{p,q}(X, \mathcal{O})$ is non-Hausdorff and infinite dimensional when $X$ is a wild toroidal group. 
This classification, based on whether all the dimensions of $ H^{p,q}(X, \mathcal{O})$ are finite or not, was established by Kazama \cite{Kazama}. 
Specifically, the Dolbeault cohomology groups of theta toroidal groups are explicitly calculated in \cite{Kazama}. 
Subsequently, Umeno specified the de Rham cohomology groups $H^k_{DR}(X)$ of toroidal groups, and proved the Hodge decomposition $\textstyle H^k_{DR}(X)=\bigoplus_{p+q=k} H^{p,q}(X, \mathcal{O})$ for theta toroidal groups \cite{Ume}. 
Moreover, Kazama and Takayama proved that the $\del\delbar$-lemma is valid for theta toroidal groups, but invalid for wild theta toroidal groups \cite{KaTa}. 
Generally, the $\del\delbar$-lemma is asserted for compact complex manifolds, but we state  it for toroidal groups: namely, a toroidal group $X$ is said to satisfy the $\del\delbar$-lemma if every $d$-exact form on $X$ is $\del\delbar$-exact.
Thus, theta toroidal groups are found to behave like complex tori. 

On the other hand, the Bott--Chern and the Aeppli cohomologies play important roles in the study of compact (non-K\"{a}hler) manifolds. 
This is due to the fact that while these two cohomologies coincide on compact K\"{a}hler manifolds, but they do not on compact non-K\"{a}hler manifolds. 
The $\del\delbar$-lemma is known not to always hold for general compact manifolds. 
However, Angella--Tomassini \cite{AT} and Angella--Taradini \cite{ATa} showed that for compact manifolds, the validity of the $\del\delbar$-lemma is characterized by the dimensions of these two cohomologies (see \S \ref{ddbar_lemma}). 

For these reasons, we focus on theta toroidal groups that satisfy the $\del\delbar$-lemma but are not compact, investigating the relationship between their Bott--Chern and Aeppli cohomologies. 
This paper examines Bott--Chern and Aeppli cohomologies of two-dimensional non-compact toroidal groups. 

Let $X$ be a complex manifold. 
We define {\it Bott--Chern} and {\it Aeppli cohomologies} as follows: 
\begin{align*}
H^{p,q}_{BC}(X) &\coloneqq
\left\{
\begin{array}{llll}
\frac{\Ker d\cap\mathcal{A}^{p,q}(X)}{\del\delbar\mathcal{A}^{p-1,q-1}(X)} &(p\geq1,q\geq1), \\
\frac{\Ker d\cap \mathcal{A}^{p,0}(X)}{\del \Omega^{p-1}(X)} &(p\geq1,q=0), \\
\frac{\Ker d\cap \mathcal{A}^{0,q}(X)}{\delbar\, \bar{\Omega}^{q-1}(X)} &(p=0,q\geq1),\\
\Ker d\cap \mathcal{A}(X) &(p=0,q=0),
\end{array}
\right. \\
H^{p,q}_A(X) &\coloneqq
\left\{
\begin{array}{llll}
\frac{\Ker \del\delbar\cap\mathcal{A}^{p,q}(X)}{\del\mathcal{A}^{p-1,q}(X)+\delbar\mathcal{A}^{p,q-1}(X)} &(p\geq1,q\geq1), \\
\frac{\Ker \del\delbar\cap \mathcal{A}^{p,0}(X)}{\del \mathcal{A}^{p-1,0}(X)+\Omega^p(X)} &(p\geq1,q=0), \\
\frac{\Ker \del\delbar\cap \mathcal{A}^{0,q}(X)}{\delbar\mathcal{A}^{0,q-1}(X)+\bar{\Omega}^q(X)} &(p=0,q\geq1),\\
\frac{\Ker \del\delbar\cap \mathcal{A}(X)}{\bar{\mathcal{O}}(X)+\mathcal{O}(X)} &(p=0,q=0),
\end{array}
\right. \\
\end{align*}
where $\mathcal{A}^{p,q}$ is the sheaf of germs of $C^\infty$ $(p,q)$-forms on $X$. 
Denote by $h_{BC}^{p,q}$ and $h_{A}^{p,q}$ their dimensions. 

\begin{remark}
The definitions of the Bott--Chern and Aeppli cohomology groups as above differ from the conventional ones. 
In particular, the differences from the standard definitions occur in the cases where $p=0$ or $q=0$. 
In contrast, when both $p$ and $q$ are at least $1$, our definitions agree with the usual ones. 
Our motivation for employing these definitions is to examine the extent of the gap between local and global solvability (see Lemma \ref{Sch_ex}). 
\end{remark}

For $\tau\in\H=\{z\in\C \mid \Im z >0\}$ and real numbers $p$ and $q$, we define the $\Z$-generated $\R$-independent discrete subgroup $ \Lambda_{\tau,p,q} \subset \C^2$ as follows:  
\begin{align}\label{lattice}
	\Lambda_{\tau,p,q} \coloneqq \left\langle 
	\begin{pmatrix}
	0 \\
	1 \\
	\end{pmatrix},\ 
	\begin{pmatrix}
	1 \\
	p \\
	\end{pmatrix},\ 
	\begin{pmatrix}
	\tau \\
	q \\
	\end{pmatrix}
	\right\rangle.
\end{align}
It is known that any two-dimensional non-compact theta toroidal group can be represented by the quotient $\C^2/\Lambda_{\tau,p,q}$ for a certain tuple $(\tau,p,q)$ (See \S $\ref{2-dim}$). 

For a two-dimensional non-compact theta toroidal group $X=\C^2/\Lambda_{\tau,p,q}$, we obtain the following theorems.
\begin{thm}[{Bott--Chern cohomology}]\label{BCcohom}
The Bott--Chern numbers of a two-dimensional non-compact theta toroidal group $X$ are:
\begin{align*}
\begin{array}{ccccc}
     &  & h^{0,0}_{BC}=1 &  &  \\
     & h^{1,0}_{BC}=2 &  & h^{0,1}_{BC}=2 &  \\
    h^{2,0}_{BC}=1 &  & h^{1,1}_{BC}=3 &  & h^{0,2}_{BC}=1 \\
     & h^{2,1}_{BC}=1 &  & h^{1,2}_{BC}=1 &  \\
     &  & \phantom{\;.} h^{2,2}_{BC}=0 \;. &  & 
\end{array}
\end{align*}
\end{thm}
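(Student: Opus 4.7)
The plan is to compute each $H^{p,q}_{BC}(X)$ by reducing $d$-closed forms to translation-invariant (constant-coefficient) representatives on $\C^2$ via Fourier analysis along $\Lambda_{\tau,p,q}$, and then identifying the $\del\delbar$-exact ones among these. The main ingredients are Kazama's Dolbeault calculation for theta toroidal groups, the Kazama--Takayama $\del\delbar$-lemma, and an explicit linear-algebra analysis based on the generators in (\ref{lattice}).

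The boundary cases follow directly from the definitions. For $p\geq 1$, a $d$-closed $(p,0)$-form is a closed holomorphic $p$-form, and on the toroidal group $X$ these are spanned by constant-coefficient forms in $dz_1,dz_2$; the denominator $\del\Omega^{p-1}(X)$ vanishes since $\Omega^{p-1}(X)$ consists of closed invariant holomorphic forms. This yields $h^{1,0}_{BC}=2$ and $h^{2,0}_{BC}=1$, and by complex conjugation $h^{0,1}_{BC}=2$ and $h^{0,2}_{BC}=1$. The $(0,0)$-case gives global constants, so $h^{0,0}_{BC}=1$. For $(2,2)$, every $(2,2)$-form is automatically $d$-closed; since $X$ is a non-compact orientable real $4$-manifold, $H^4_{DR}(X)=0$, so every $(2,2)$-form is $d$-exact, and the $\del\delbar$-lemma then upgrades $d$-exactness to $\del\delbar$-exactness, giving $h^{2,2}_{BC}=0$.

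For the interior bidegrees $(1,1),(2,1),(1,2)$, I first show that every Bott--Chern class admits a constant-coefficient representative. Lifting a $d$-closed $(p,q)$-form to $\C^2$ and Fourier-expanding along the three real directions spanned by $\Lambda_{\tau,p,q}$, each non-trivial Fourier mode is $\delbar$-exact by Kazama's analysis. Combined with the $\del\delbar$-lemma applied componentwise (each mode inherits $d$-closedness from the total form), this upgrades $\delbar$-exactness to $\del\delbar$-exactness. Hence only the zero-mode (constant-coefficient) part can survive in $H^{p,q}_{BC}(X)$, yielding the upper bounds $h^{1,1}_{BC}\leq 4$ and $h^{2,1}_{BC},h^{1,2}_{BC}\leq 2$ from the dimensions of the invariant $(p,q)$-form spaces.

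The main obstacle is then to identify precisely which constant-coefficient forms lie in $\del\delbar\mathcal{A}^{p-1,q-1}(X)$. For $(p,q)=(1,1)$, this amounts to determining which Hermitian matrices $(h_{jk})$ arise as $(f_{z_j\bar{z}_k})_{j,k}$ for some $\Lambda_{\tau,p,q}$-invariant smooth function $f$ on $\C^2$. The invariance condition $f(z+\lambda)=f(z)$ over the three generators of (\ref{lattice}) produces a theta-type linear obstruction on $(h_{jk})$: the linear growth generated by the Hermitian quadratic part must be absorbed by a complementary holomorphic ``theta-correction''. A direct computation with the explicit lattice generators yields a $1$-dimensional solvability space, giving $h^{1,1}_{BC}=4-1=3$. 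The analogous analysis of $\del\delbar:\mathcal{A}^{1,0}(X)\to\mathcal{A}^{2,1}(X)$ and $\del\delbar:\mathcal{A}^{0,1}(X)\to\mathcal{A}^{1,2}(X)$ identifies a $1$-dimensional solvability space inside each $2$-dimensional invariant target, yielding $h^{2,1}_{BC}=h^{1,2}_{BC}=1$.
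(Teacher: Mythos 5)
Your endpoint values are all correct, and your treatment of the boundary bidegrees is sound --- in places more self-contained than the paper's: for $(p,0)$ you note that a $d$-closed $(p,0)$-form is holomorphic, hence has constant coefficients because toroidal groups admit no non-constant holomorphic functions, and that the denominators $\del\Omega^{p-1}(X)$ vanish; the paper instead bounds $h^{p,0}_{BC}$ by the Dolbeault numbers of Corollary \ref{dol-cohom} and exhibits the classes $dz_1, dz_2, dz_1\wedge dz_2$. Your $(2,2)$ argument ($H^4_{DR}(X)=0$ for a connected non-compact orientable $4$-manifold, then Fact \ref{ddbar}) is essentially the paper's. In the interior bidegrees the mechanics differ: the paper quotes Umeno's theorem (Fact \ref{dexact}, in the form of Lemma \ref{dexact_rmk}) --- every $d$-closed $k$-form equals a unique constant-coefficient form in $\bigwedge^k\{dz_1,dz_2,d\bar{z}_1\}$ plus a $d$-exact one --- applies Fact \ref{ddbar} once, globally, to the $d$-exact tail, and for $(1,1)$ absorbs the possible $(2,0)$ constant via $dz_2=d(2\i\Im z_2)+d\bar{z}_2$, getting the upper bound $3$ directly with independence from Umeno's uniqueness. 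You instead work in the full $4$-dimensional invariant $(1,1)$-space and subtract the $\del\delbar$-solvable constants; your count is right, since the solvable line is exactly $\C\,dz_2\wedge d\bar{z}_2$ (note $dz_2\wedge d\bar{z}_2=\del\delbar\bigl(2(\Im z_2)^2\bigr)$, with $\Im z_2$ globally defined on $X$ because all three generators in $(\ref{lattice})$ have real second coordinate), and similarly $dz_1\wedge dz_2\wedge d\bar{z}_2=\del\delbar\bigl(2(\Im z_2)^2\,dz_1\bigr)$ handles $(2,1)$; so $4-1=3$ and $2-1=1$ agree with the paper.

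The one step that does not hold up as written is the mode-wise reduction: ``each non-trivial Fourier mode is $\delbar$-exact by Kazama's analysis, and the $\del\delbar$-lemma applied componentwise upgrades this to $\del\delbar$-exactness.'' Fact \ref{ddbar} converts $d$-exactness into $\del\delbar$-exactness; it says nothing about $\delbar$-exact $d$-closed forms, so you would first need each $d$-closed non-trivial mode to be $d$-exact (this is true --- non-zero modes carry no de Rham cohomology, by an explicit computation in each mode --- but that computation is in effect Umeno's proof of Fact \ref{dexact}, so it must be supplied, not assumed). More seriously, even granting a $\del\delbar$-potential for each mode, you must sum these potentials to a smooth global potential; this resummation is precisely where the small-divisor estimate $(\ref{theta})$ enters and where the argument breaks for wild toroidal groups, and your sketch does not address it. The paper sidesteps both issues: Umeno's Fact \ref{dexact} holds for all toroidal groups and already provides a convergent global $d$-primitive for $\phi-\chi$, after which a single global application of Fact \ref{ddbar} (the point where the theta hypothesis is consumed, inside the cited Kazama--Takayama result) finishes. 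Either replace your componentwise step by these citations, or prove convergence of the summed potentials along the lines of Kazama's estimates. Finally, in your $(1,1)$ solvability analysis, the potential of a constant form need not a priori be Hermitian-quadratic-plus-pluriharmonic; to make the ``$1$-dimensional solvability space'' rigorous, average the potential over the maximal real torus to reduce to a function $f(\Im z_2)$, for which $\del\delbar f=\tfrac14 f''(\Im z_2)\,dz_2\wedge d\bar{z}_2$, which confirms your count.
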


\begin{thm}[{Aeppli cohomology}]\label{Aeppli}
The Aeppli numbers of a two-dimensional non-compact theta toroidal group $X$ are:
\begin{align*}
\begin{array}{ccccc}
     &  & h^{0,0}_{A}=1 &  &  \\
     & \tilde{h}^{1,0}_{A}=1 &  & \tilde{h}^{0,1}_{A}=1 &  \\
    h^{2,0}_{A}=0 &  & \tilde{h}^{1,1}_A=2 &  & h^{0,2}_{A}=0 \\
     & h^{2,1}_{A}=0 &  & h^{1,2}_{A}=0 &  \\
     &  & \phantom{\;.} h^{2,2}_{A}=0 \;, &  & 
\end{array}
\end{align*}
where $\tilde{h^{p,q}_A}$ are the dimensions of the Hausdorff completion of $H^{p,q}_A(X)$. 
Here, we consider $\mathcal{A}^{p,q}(X)$ with the topology of the uniform convergence of all the derivatives of coefficients on compact sets, and give $H^{p,q}_A(X)$ the quotient topology.
\end{thm}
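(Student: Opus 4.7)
The plan is to compute each Aeppli number by directly analysing $\Ker(\del\delbar)\cap\mathcal{A}^{p,q}(X)$ modulo the relevant image, using the identification of smooth forms on $X=\C^2/\Lambda_{\tau,p,q}$ with $\Lambda$-invariant smooth forms on $\C^2$. The main inputs are the Dolbeault computation of \cite{Kazama}, the $\del\delbar$-lemma of \cite{KaTa}, and the Bott--Chern numbers just established in Theorem \ref{BCcohom}.

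First I would handle the six bidegrees for which the Aeppli cohomology is already Hausdorff. The case $(0,0)$ is illustrative: every pluriharmonic function on $\C^2$ is the real part of an entire holomorphic $f$, and $\Lambda$-invariance of $\Re f$ forces each partial derivative $\del_j f$ to be a $\Lambda$-invariant holomorphic function on $\C^2$, hence constant by the defining property of a toroidal group. Thus $f$ is affine linear; imposing $\Lambda$-invariance of $\Re f$ on the generators of $\Lambda_{\tau,p,q}$ cuts out a two-dimensional space (spanned by $1$ and $\Im z_2$), and quotienting by $\mathcal{O}(X)+\bar{\mathcal{O}}(X)=\C$ yields $h^{0,0}_A=1$. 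The remaining Hausdorff cases $(2,0),(0,2),(2,1),(1,2),(2,2)$ are approached by lifting $\del\delbar$-closed representatives to $d$-closed ones via the $\del\delbar$-lemma, and then using the Dolbeault vanishings from \cite{Kazama} to write them as $\del$-exact or $\delbar$-exact.

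The main obstacle is the three non-Hausdorff bidegrees $(1,0),(0,1),(1,1)$, where one must compute the Hausdorff completion
\begin{equation*}
    \tilde{H}^{p,q}_A(X) = \frac{\Ker\del\delbar\cap \mathcal{A}^{p,q}(X)}{\overline{\del \mathcal{A}^{p-1,q}(X)+\delbar \mathcal{A}^{p,q-1}(X)}}.
\end{equation*}
The failure of Hausdorffness reflects the non-compactness of $X$: the joint operator $(\del,\delbar)\colon \mathcal{A}^{p-1,q}\oplus\mathcal{A}^{p,q-1}\to \mathcal{A}^{p,q}$ has non-closed range in these bidegrees. To deal with this I would exploit the topological splitting $X\cong T^3\times\R$ (the $\R$-factor being the $\Lambda$-invariant coordinate $\Im z_2$), Fourier-expand $\Lambda$-invariant forms along $T^3$ so that the non-zero Fourier modes are absorbed by the closure, and work with the resulting finite-dimensional space of $T^3$-invariant representatives. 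For $\tilde{h}^{1,1}_A=4$ in particular, one can combine the natural maps $H^{1,1}_{BC}(X)\to H^{1,1}_{\bar\del}(X)\to \tilde{H}^{1,1}_A(X)$ with $h^{1,1}_{BC}=3$ from Theorem \ref{BCcohom} and the Dolbeault dimension from \cite{Kazama} to pin down the value $4$. The principal technical difficulty throughout is the careful tracking of the Fr\'echet topology and of the closures of denominators, which replaces the Hodge-theoretic arguments available in the compact setting.
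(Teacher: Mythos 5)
Your handling of the Hausdorff bidegrees is essentially the paper's: for $(2,q)$ and $(p,2)$ every form of the given type is automatically $\del\delbar$-closed (all $(3,\cdot)$- and $(\cdot,3)$-forms vanish on a surface), so $H^{2,q}_A(X)$ and $H^{p,2}_A(X)$ are quotients of $H^{2,q}_{\del}(X)$ and $H^{p,2}_{\delbar}(X)$, which vanish by Kazama's computation; note that your preliminary step of ``lifting $\del\delbar$-closed representatives to $d$-closed ones via the $\del\delbar$-lemma'' is both unnecessary and backwards (the lemma converts $d$-exact forms into $\del\delbar$-exact ones; it does not supply $d$-closed representatives). Your direct argument for $h^{0,0}_A=1$ is fine once ``real part of an entire function'' is replaced by $f+\bar{g}$ with $f,g$ entire (the numerator consists of complex-valued pluriharmonic functions), and it is more elementary than the paper's route through the sheaf $\mathcal{G}$ and Proposition \ref{cong00}.

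The genuine gap is in the non-Hausdorff bidegrees, which carry the entire content of the theorem. First, $\tilde{h}^{1,1}_A=4$ cannot be ``pinned down'' from the natural maps $H^{1,1}_{BC}(X)\to H^{1,1}_{\delbar}(X)\to \tilde{H}^{1,1}_A(X)$: these maps are neither injective nor surjective, and since $h^{1,1}_{\delbar}=2$ and $h^{1,1}_{BC}=3$, no dimension count through them can output the larger value $4$; this step of your proposal simply does not exist as an argument. Second, your claim that absorbing the nonzero Fourier modes leaves ``the resulting finite-dimensional space of $T^3$-invariant representatives'' skips a real step: the $T^3$-invariant part of $\Ker\del\delbar\cap\mathcal{A}^{1,1}(X)$ is infinite-dimensional (for instance $b(t_4)\,dz_1\wedge d\bar{z}_2$ is $\del\delbar$-closed for an arbitrary smooth function $b$ of $t_4=\Im z_2$), so one must still solve the ODEs in $t_4$ governing the zero-mode coefficients to decide which invariant forms lie in the closure of $\del\mathcal{A}^{0,1}(X)+\delbar\mathcal{A}^{1,0}(X)$; this analysis is exactly where the generator $t_4\,dz_1\wedge d\bar{z}_1$, invisible in your sketch, comes from, and it is delicate --- note for example that $dz_1\wedge d\bar{z}_2=\delbar(2\i\,t_4\,dz_1)$ and $dz_2\wedge d\bar{z}_1=\del(2\i\,t_4\,d\bar{z}_1)$ with globally defined potentials, so constant forms can be killed by zero-mode potentials linear in $t_4$, and naive bookkeeping of constants is not enough. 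Third, the absorption of the nonzero modes is itself the analytic heart of the paper's proof and is only asserted in yours: the paper first replaces the $\mathcal{A}$-complex by the $\mathcal{G}$-complex via sheaf resolutions and an acyclicity argument (Lemmas \ref{ex1_1} and \ref{another_sq1_1}, Proposition \ref{G_vani}, yielding Proposition \ref{cong1_1}), then builds mode-by-mode formal potentials from the relations $(\ref{relation11})$ and solutions $(\ref{solution_a1})$, with the theta condition $(\ref{theta})$ controlling the small denominators $A^\sigma$, and obtains density of the image from finite partial sums; nothing in your outline substitutes for this, nor for the analogous determination of $\tilde{h}^{1,0}_A=\tilde{h}^{0,1}_A=1$.
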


\begin{remark}
A detailed description of the Bott--Chern and Aeppli cohomology groups with respect to the standard definitions is provided in Remark \ref{standard_def}.
\end{remark}

The paper is organized as follows. 
In \S \ref{preli}, we introduce the fundamentals of toroidal groups and the tools for computing several cohomologies. 
For a two-dimensional toroidal group $X$, we describe the construction of $X$ and present some known facts about it in \S \ref{2-dim}. 
To prove main theorems, we will use some sheaf exact sequences. 
Hence, we explain the special sheaves on toroidal groups in \S \ref{sheaves}, and the local resolution lemma in \S \ref{local}. 
In addition, we briefly introduce the computation methods and known results for the Dolbeault cohomology of theta toroidal groups in \S \ref{Dol}. 
In \S \ref{pf_11}, we prove Theorem \ref{BCcohom}. 
Then we obtain the representations of each Bott--Chern cohomology groups for theta toroidal groups. 
In \S \ref{sec_Aeppli}, we analyze a quotient vector space isomorphic to Aeppli cohomology for two-dimensional theta toroidal groups. 
In \S \ref{pf_12}, we prove Theorem \ref{Aeppli}. 
Finally, we present the fact about the $\del\delbar$-lemma for compact complex manifolds in \S \ref{ddbar_lemma}, and the special cohomology relevant to the local resolution lemma in \S \ref{thirdcohom}.

{\bf Acknowledgement. }
The author would like to thank Professor Hisashi Kasuya and Professor Giovanni Placini for thier invaluable guidance on the research policy, and Prof. Takayuki Koike for his helpful comments. 
We are also grateful to Professor Laurent Stolovitch for the insightful discussions regarding the covergence problem of formal power series. 
Furthermore, I am thankful to Professor Xiaojun Wu for providing critical comments that led to important corrections in the main results. 
This work was partly supported by MEXT Promotion of Distinctive Joint Research Center Program JPMXP0723833165 and Osaka Metropolitan University Strategic Research Promotion Project (Development of International Research Hubs). 
This work was also supported by JST SPRING Grant Number JPMJSP2139 and JSPS Bilateral Program Number JPJSBP120243210.

\end{section}
\begin{section}{Preliminary}\label{preli}
In this section, we introduce some fundamentals of toroidal groups and the tools necessary for the proof. 
In advance of each subsection, we will present facts about general theta toroidal groups. 

\begin{fact}[{\cite[Theorem $3.3$]{KaTa}}]\label{ddbar}
The $\del\delbar$-lemma holds on a theta toroidal group $X$. 
That is, for any $C^\infty$ $(k,\ell)$-form $\phi$ on $X$ with $k\geq1$ and $\ell\geq1$, $\phi$ is $d$-exact if and only if $\phi$ is $\del\delbar$-exact.
\end{fact}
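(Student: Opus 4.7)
The plan is to adapt the Hodge-theoretic proof of the $\del\delbar$-lemma (in the style of Deligne--Griffiths--Morgan--Sullivan) to the non-compact setting of theta toroidal groups, using as key inputs (i) the finite-dimensionality of $H^{p,q}(X,\mathcal{O})$ for theta toroidal groups, and (ii) the existence of translation-invariant representatives for every Dolbeault class, which follows from Kazama's explicit description.

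One direction is trivial: $\phi = \del\delbar\alpha$ is $d$-exact via $d(\delbar\alpha)$. For the converse, suppose $\phi \in \mathcal{A}^{k,\ell}(X)$ with $k,\ell \geq 1$ satisfies $d\phi = 0$ and $\phi = d\psi$. Decomposing $\psi$ by bidegree and applying a standard iterative argument propagating $d$-closedness through the bidegree components, one arrives at a representation $\phi = \del \alpha + \delbar \beta$ where $\alpha$ is a $\delbar$-closed form of type $(k-1,\ell)$ and $\beta$ is a $\del$-closed form of type $(k, \ell-1)$.

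The core step is then to apply the Hodge-type decomposition: by Kazama's computation of $H^{p,q}(X, \mathcal{O})$ for theta toroidal groups, every $\delbar$-closed form is $\delbar$-cohomologous to a translation-invariant form (and analogously for $\del$). Write $\alpha = h_1 + \delbar u_1$ and $\beta = h_2 + \del u_2$ with $h_1, h_2$ translation-invariant. Since $X$ is a quotient of the abelian complex Lie group $\C^n$, translation-invariant forms have constant coefficients in the invariant frame $\{dz_i, d\bar z_j\}$, so $\del h_i = \delbar h_i = 0$ for $i=1,2$. Substituting,
\begin{align*}
\phi = \del\delbar u_1 - \del\delbar u_2 = \del\delbar(u_1 - u_2),
\end{align*}
proving $\phi$ is $\del\delbar$-exact.

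The main obstacle is the Hodge-type decomposition lemma: on the non-compact manifold $X$, compact Hodge theory is not available, and one must instead use Fourier decomposition along the maximal compact sub-torus $T^m \subset X$ (where $m$ is the rank of $\Lambda$). The $\delbar$-equation then decouples into countably many modewise equations, and the theta condition is precisely what guarantees uniform smooth solvability of the nonzero-mode equations; only the zero (translation-invariant) mode survives in cohomology. For wild toroidal groups this fails, the Dolbeault cohomology becoming non-Hausdorff and infinite-dimensional, and correspondingly the $\del\delbar$-lemma must fail there as well.
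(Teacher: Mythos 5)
Your sketch is essentially correct, but note first that the paper itself contains no proof of this statement: it is quoted as a Fact from Kazama--Takayama \cite[Theorem 3.3]{KaTa}, so the comparison is with that cited source and with the machinery the paper actually develops. There, and in \S\ref{Dol} of this paper, the analysis is done directly: one expands along the maximal real subtorus, solves the relevant equations mode by mode, and uses the Diophantine theta condition $(\ref{theta})$ to control the small divisors and obtain smooth (not merely formal) solutions, with only the invariant zero mode surviving. Your route packages all of that hard analysis into a single input --- Kazama's theorem that every Dolbeault class on a theta toroidal group admits a translation-invariant representative (Lemma \ref{dol_repre} and Proposition \ref{Kazama_Dol} in the $2$-dimensional case, Kazama's general theorem otherwise, plus its conjugate for $\del$) --- and then runs the formal DGMS-style bidegree descent. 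That is a genuinely different and arguably cleaner organization: the convergence/small-divisor work is quarantined inside the Dolbeault computation, and the $\del\delbar$-lemma itself becomes soft. One step you should make explicit, since as written it hides a use of the key lemma: the ``standard iterative argument'' producing $\phi=\del\alpha+\delbar\beta$ with $\delbar\alpha=0$ and $\del\beta=0$ is, in DGMS and in Lemma \ref{Sch_ex}(i), a \emph{local} argument relying on Dolbeault's lemma on a ball. Globally on $X$ the extreme component $\psi^{0,k+\ell-1}$ of a primitive $\psi$ is $\delbar$-closed but need not be $\delbar$-exact, so at every stage of the sweep you must again invoke the global decomposition (closed $=$ invariant $+$ exact) and subtract both the exact part $d\gamma$ and the invariant part $h$; this is legitimate precisely because invariant forms in the frame $\{dz_i,d\bar{z}_j\}$ are $d$-closed, so $d(\psi-h-d\gamma)=d\psi$, and the descending sweep killing the components of high holomorphic degree does not disturb the ones already killed. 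With that spelled out, the final assembly $\phi=\del\delbar u_1+\delbar\del u_2=\del\delbar(u_1-u_2)$, using $\del h_1=\delbar h_1=\del h_2=\delbar h_2=0$, is correct, and your closing remark about wild groups is consistent with the converse part of \cite[Theorem 3.3]{KaTa} recorded in the paper's Remark.
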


\begin{fact}[{\cite[Proposition $2.1$]{Ume}}]\label{dexact}
Let $X$ be a toroidal group. 
For every $d$-closed $C^\infty$ $k$-form $\phi$ on $X$, there exist the $k$-form $\chi$ with constant coefficients on $X$ and a $C^\infty$ $(k-1)$-form $\psi$ on $X$ such that 
\begin{align}\label{d_repre}
\phi= \chi +d \psi.
\end{align}
Furthermore, $\chi$ is uniquely determined. 
\end{fact}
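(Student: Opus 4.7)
The plan is to exploit the real-Lie-group structure of $X = \C^n/\Lambda$. Setting $r \coloneqq \operatorname{rank}_\Z \Lambda$, $\Lambda_\R \coloneqq \Lambda \otimes_\Z \R \subset \R^{2n}$, and choosing a real-linear complement $V'$ with $\C^n = \Lambda_\R \oplus V'$, the manifold $X$ is diffeomorphic to $K \times V'$, where $K \coloneqq \Lambda_\R/\Lambda \cong T^r$ is the maximal compact subgroup. In particular $X$ deformation-retracts linearly onto $K$, so $H^k_{dR}(X;\C) \cong \bigwedge^k (\Lambda_\R)^*_\C$.

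For existence I would run a standard two-step argument. Let $j \colon K \hookrightarrow X$ denote the inclusion and $\mathrm{pr} \colon X \to K$ the projection from the product decomposition, so that $j \circ \mathrm{pr}$ is smoothly homotopic to $\operatorname{id}_X$ via the straight-line homotopy collapsing the $V'$-factor. The Cartan homotopy formula applied to this homotopy and to the $d$-closed form $\phi$ produces a smooth $(k-1)$-form $\psi_1$ on $X$ with $\phi - \mathrm{pr}^* j^* \phi = d\psi_1$. Next, on the compact torus $K$ I would average $j^*\phi$ against Haar measure under the translation action of $K$, obtaining a translation-invariant representative $\eta \in \bigwedge^k (\Lambda_\R)^*_\C$ together with a primitive $\mu$ on $K$ such that $j^*\phi = \eta + d\mu$. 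Pulling back and combining yields $\phi = \mathrm{pr}^*\eta + d(\psi_1 + \mathrm{pr}^*\mu)$, so that $\chi \coloneqq \mathrm{pr}^*\eta$ has constant coefficients when lifted to $\C^n$, and $\psi \coloneqq \psi_1 + \mathrm{pr}^*\mu$ yields the required decomposition.

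For uniqueness, the natural linear map $\bigwedge^k (\Lambda_\R)^*_\C \to H^k_{dR}(X;\C)$ has source and target both of complex dimension $\binom{r}{k}$, and the existence step has just shown surjectivity, so the map is an isomorphism. Hence $\chi$ is recovered unambiguously from the de Rham class of $\phi$.

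The hard part will be the Cartan homotopy step in the non-compact direction: because $V'$ is not compact, one must verify that the primitive $\psi_1$ produced by integrating the interior product against the vector field generating the linear homotopy is a genuinely smooth form defined on all of $X$, not merely a local object. This is ultimately automatic because the homotopy is linear in the $V'$-coordinate and the integrand is a one-parameter smooth family, but checking smoothness and $\Lambda$-equivariance at every step is the technical core of the argument.
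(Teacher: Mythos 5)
The paper never proves this statement: Fact \ref{dexact} is imported from \cite[Proposition 2.1]{Ume} and used as a black box, so there is no internal proof to compare against; your proposal stands or falls on its own, and it stands. The argument is the standard one and is correct in all essentials: with $r=\mathrm{rank}\,\Lambda$ and $\C^n=\Lambda_\R\oplus V'$ one has $X\cong(\Lambda_\R/\Lambda)\times V'\cong T^r\times V'$ as real Lie groups; the homotopy operator of the linear retraction kills the $V'$-directions (your worry about smoothness of $\psi_1$ is indeed harmless: the operator integrates a smooth one-parameter family over the compact interval $[0,1]$, and $\Lambda$-equivariance is automatic because the retraction is defined on $X$ itself, $\Lambda\subset\Lambda_\R$ acting only on the first factor); and Haar averaging on $K$ replaces $j^\ast\phi$ by a translation-invariant, i.e.\ constant-coefficient, representative. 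The literature this paper relies on runs the same computation in Fourier-dual form: expand along the maximal real torus, take $\chi$ to be the zeroth Fourier mode, and invert $d$ explicitly on the nonzero modes, where no small divisors occur (in contrast with the $\delbar$-theory). This explains why, exactly as in your proof, toroidality is never used and the statement persists for wild toroidal groups, as the Remark after Fact \ref{dexact} records; for the surface case $r=3$, your $\dim H^k_{DR}(X)=\binom{3}{k}$ agrees with Corollary \ref{betti}.

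One caveat concerns the uniqueness clause, which your dimension count handles correctly but which is subtler than the Fact's literal wording. Uniqueness among \emph{all} constant-coefficient forms is false on non-compact toroidal groups: on $X=\C^2/\Lambda_{\tau,p,q}$ the function $t_4=\Im z_2$ is globally defined, so $dz_2=d\bar{z}_2+d(2\sqrt{-1}\,t_4)$ exhibits two distinct constant-coefficient representatives of the same de Rham class. What is true --- and what your surjectivity-plus-equal-dimensions argument for the map $\bigwedge^k(\Lambda_\R)^\ast_\C\to H^k_{DR}(X)$ proves --- is uniqueness after fixing a complement of the $d$-exact constant forms. Note also that your complement, $\mathrm{pr}^\ast\bigwedge^k(\Lambda_\R)^\ast_\C$ (spanned in the surface case by wedges of $dz_1$, $d\bar{z}_1$ and $dz_2+d\bar{z}_2$), differs from the normal form the paper actually uses downstream, namely $\bigwedge^k\{dz_1,dz_2,d\bar{z}_1\}$ in Lemma \ref{dexact_rmk}; the two differ only by the exact constant form $dz_2-d\bar{z}_2=2\sqrt{-1}\,dt_4$, so they are interchangeable, but the substitution $dz_2=d(2\sqrt{-1}\Im z_2)+d\bar{z}_2$ (the one used in the proof of Theorem \ref{BCcohom}) is needed to match your $\chi$ against the paper's formulas.
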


\begin{remark}
It is also known what happens to the assertions of Fact \ref{ddbar} and Fact \ref{dexact} on wild toroidal groups. 
\cite[Theorem $3.3$]{KaTa} states that the $\del\delbar$-lemma does not hold on a wild toroidal group. 
On the other hand, the validity of Fact \ref{dexact} does not depend on whether $X$ is a theta toroidal group or a wild toroidal group. 
\end{remark}

These facts will be invaluable when we subsequently compute cohomologies. 
Each subsection will now lay the groundwork for the proof of Theorem \ref{BCcohom} and Theorem \ref{Aeppli}.
\begin{subsection}{Two-dimensional toroidal groups}\label{2-dim}
In this subsection, we discuss the fundamentals of two-dimensional toroidal groups. 

We define a discrete subgroup $\Lambda_{\tau,p,q} \subset \C^2$ as in $(\ref{lattice})$ and the complex surface $X\coloneqq \C^2/\Lambda_{\tau,p,q}$.
The complex surface $X$ is a toroidal group if and only if either $p$ or $q$ is an irrational number, \cite{Kop} (See also \cite[Theorem $1.1.4$]{AK}). 
In other words, every two-dimensional non-compact toroidal group $X$ can be written in the form $\C^2/\Lambda_{\tau,p,q}$ for some element $\tau\in\H$ and some pair $(p,q)\in\R^2\setminus\Q^2$.
Let $\pi: X \to \C/\langle 1,\tau \rangle$ be the mapping induced by the first projection $\C^2\to \C$ ; $(z,w)\to z$. 
Note that $\pi: X \to \C/\langle 1,\tau \rangle$ can be regarded as a $\C^\ast$-bundle structure on the elliptic curve $\C/\langle 1,\tau \rangle$. 
 
Toroidal groups are classified into theta toroidal groups and wild toroidal groups based on number theoretical conditions, as shown in \cite{Kazama}. 
In particular, \cite[Theorem $4.3$]{Kazama} states that $X=\C^2/\Lambda_{\tau,p,q}$ is a theta toroidal group if and only if there exist $C>0$ and $0<\delta<1$ such that: 
\begin{align}\label{theta}
\dist (\Z^2, (np,nq)) \geq C\delta^n
\end{align}
for any $n\in\Z_{>0}$, where $\dist$ is the Euclidean distance (See also \cite[Section $2$]{KoTa}). 

Denote the standard basis of $\R^2$ by $\{e_1,e_2\}$. 
The coordinate transformation $\C^2 \ni (z_1,z_2) \mapsto (t_1,t_2,t_3,t_4)\in\R^4$ defined by
\begin{align*}
z_1e_1+z_2e_2=  t_1 (1,p)+t_2 (0,1) +t_3 (\tau, q)+t_4 \sqrt{-1}e_2
\end{align*}
is an isomorphism between real Lie groups. 
Then the following relations hold:
\begin{align*}
t_1&=\Re z_1 -\frac{\Re\tau}{\Im\tau} \Im z_1, & &t_2=\Re z_2 -p\left(\Re z_1+ \frac{\Re\tau}{\Im\tau}\right)- q\Im z_1,\\ 
t_3&=\frac{\Im z_1}{\Im\tau}, & &t_4=\Im z_2.
\end{align*}
By the isomorphism $\phi : X \to \C^\ast\times T_{\R}^2$ ; $(z_1, z_2)+\Lambda_{\tau,p,q} \to (e^{2\pi\sqrt{-1}(t_2+\sqrt{-1}t_4)}, e^{2\pi\sqrt{-1}t_1}, e^{2\pi\sqrt{-1}t_3})$, we obtain the commutative diagram
\begin{align*}
\xymatrix{\ar @{} [dr] 
X \ar[d]_{\pi} \ar[r]^\phi &  \C^\ast\times T_{\R}^2\ar[d]_{\tilde{\pi}} \\
\C/\langle 1,\tau \rangle \ar[r]^\sigma & T_{\R}^2 },
\end{align*}
where $\tilde{\pi}$ is the projection onto the $T_{\R}^2$-component and $\sigma: \C/\langle 1,\tau \rangle \to T_{\R}^2; z_1 \to (e^{2\pi\sqrt{-1}t_1}, e^{2\pi\sqrt{-1}t_3})$.

\end{subsection}

\begin{subsection}{Several sheaves on a toroidal group $X=\C^2/\Lambda_{\tau,p,q}$}\label{sheaves}
In this subsection, we define several sheaves utilized in the proof. 
We decompose the differential operator with respect to each variable as follows:
\begin{align*}
\del=\del_{z_1}+\del_{z_2},\quad \bar{\del}=\bar{\del}_{z_1}+\bar{\del}_{z_2}.
\end{align*}
We define the following sheaves on $X$ as the kernel of differential operators: 
\begin{align*}
\mathcal{F}&\coloneqq \Ker (\bar{\del}_{z_2}: \mathcal{A}\to\mathcal{A}^{0,1}), \\
\bar{\mathcal{F}}&\coloneqq  \Ker (\del_{z_2}: \mathcal{A}\to\mathcal{A}^{1,0}), \\
\mathcal{G}&\coloneqq \Ker (\del_{z_2}\bar{\del}_{z_2}: \mathcal{A}\to\mathcal{A}^{1,1})
\end{align*}
Subsequently, for each open set $U\subset X$, we define the presheaves
\begin{align*}
\mathcal{F}^{p,q}(U)\coloneqq \{ \sum_{I, J} f_{I,J} dz_I\wedge d\bar{z}_J &\mid f_{I,J}\in\mathcal{F}(U), I=\{ i_1\leq \dots\leq i_p \}\subset\{1,2\},\\
&J=\{ j_1\leq\dots\leq j_q \}\subset\{1\} \}, \\
\bar{\mathcal{F}}^{p,q}(U) \coloneqq \{ \sum_{I, J} f_{I,J} dz_I\wedge d\bar{z}_J &\mid f_{I,J}\in\bar{\mathcal{F}}(U), I=\{ i_1\leq \dots\leq i_p \}\subset\{1\},\\
&J=\{ j_1\leq\dots\leq j_q \}\subset\{1, 2\}\}, \\
\mathcal{G}^{1,0}(U) \coloneqq \{ f_1 dz_1+f_2dz_2 &\mid f_1\in\mathcal{G}(U), f_2\in\mathcal{F}(U) \},\\
\mathcal{G}^{0,1}(U) \coloneqq \{ f_1 d\bar{z}_1+f_2d\bar{z}_2 &\mid f_1\in\mathcal{G}(U), f_2\in\bar{\mathcal{F}}(U) \},\\
\mathcal{G}^{1,1}(U) \coloneqq \{ f_1 dz_1\wedge d\bar{z}_1 +f_2dz_2\wedge d\bar{z}_1 &+f_3 dz_1\wedge d\bar{z}_2 
\mid f_1\in\mathcal{G}(U), f_2\in\mathcal{F}(U), f_3\in\bar{\mathcal{F}}(U) \}.
\end{align*}
Here, we define $dz_I \coloneqq dz_{i_1}\wedge \cdots dz_{i_n}$ for multi-indices $I=\{i_1, \dots, i_n\}$.
Then, we denote the sheaves induced by each presheaf by $\mathcal{F}^{p,q}, \bar{\mathcal{F}}^{p,q}$ and $\mathcal{G}^{p,q}$.

Indeed, the presheaves defined in this manner become sheaves.
\begin{lem}
Presheaves $\mathcal{F}^{p,q}, \bar{\mathcal{F}}^{p,q}$ and $\mathcal{G}^{p,q}$ are sheaves on $X$. 
\end{lem}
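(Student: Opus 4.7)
The plan is to reduce the sheaf property for the form-valued presheaves $\mathcal{F}^{p,q}, \bar{\mathcal{F}}^{p,q}, \mathcal{G}^{p,q}$ to the sheaf property of the coefficient presheaves $\mathcal{F}, \bar{\mathcal{F}}, \mathcal{G}$, and then to establish the latter by observing that each is the kernel of a morphism of sheaves. Since $\mathcal{A}$ and $\mathcal{A}^{r,s}$ are sheaves on $X$, and $\bar{\partial}_{z_2}$, $\partial_{z_2}$, $\partial_{z_2}\bar{\partial}_{z_2}$ act as morphisms of sheaves of $\R$-vector spaces, their kernels $\mathcal{F}, \bar{\mathcal{F}}, \mathcal{G}$ are automatically subsheaves of $\mathcal{A}$. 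In particular the locality and gluing axioms are available for scalar coefficients.

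Next I would verify the two sheaf axioms for $\mathcal{F}^{p,q}$ (the arguments for $\bar{\mathcal{F}}^{p,q}$ and $\mathcal{G}^{p,q}$ are identical). The key observation is that on any open set $U$, the forms $\{dz_I\wedge d\bar z_J\}$ with $|I|=p$ and $J\subset\{1\}$ of length $q$ are pointwise linearly independent, so the coefficients $f_{I,J}\in\mathcal{F}(U)$ of a section $\omega=\sum_{I,J}f_{I,J}\,dz_I\wedge d\bar z_J$ are uniquely determined. Consequently, for any open cover $\{U_\lambda\}$ of $U$, the restriction $\omega|_{U_\lambda}$ has coefficients $f_{I,J}|_{U_\lambda}$.

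For the locality axiom: if $\omega|_{U_\lambda}=0$ for every $\lambda$, then $f_{I,J}|_{U_\lambda}=0$ for each $\lambda$ and each pair $(I,J)$, so $f_{I,J}=0$ in $\mathcal{F}(U)$ by the sheaf property of $\mathcal{F}$, hence $\omega=0$. For the gluing axiom: given sections $\omega_\lambda\in\mathcal{F}^{p,q}(U_\lambda)$ that agree on overlaps, write $\omega_\lambda=\sum_{I,J}f^{(\lambda)}_{I,J}\,dz_I\wedge d\bar z_J$. By uniqueness of coefficients, the family $\{f^{(\lambda)}_{I,J}\}_\lambda$ is a compatible family in $\mathcal{F}$ on $\{U_\lambda\}$ for each fixed $(I,J)$. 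Since $\mathcal{F}$ is a sheaf, these glue to a section $f_{I,J}\in\mathcal{F}(U)$, and $\omega\coloneqq\sum_{I,J}f_{I,J}\,dz_I\wedge d\bar z_J$ lies in $\mathcal{F}^{p,q}(U)$ and restricts to $\omega_\lambda$ on each $U_\lambda$.

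The argument is entirely formal, and I do not anticipate a genuine obstacle; the only point worth being careful about is the restriction on multi-indices in the definitions (for instance $J\subset\{1\}$ for $\mathcal{F}^{p,q}$, and the mixed prescriptions for $\mathcal{G}^{1,0}, \mathcal{G}^{0,1}, \mathcal{G}^{1,1}$), which one has to track so that the appropriate coefficient sheaf ($\mathcal{F}$, $\bar{\mathcal{F}}$, or $\mathcal{G}$) is applied to each monomial. Once this bookkeeping is respected, the componentwise gluing described above produces a section of the correct presheaf, finishing the proof.
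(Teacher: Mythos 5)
Your proof is correct and follows essentially the same route as the paper's: reduce everything to the coefficient presheaves via the global frame $\{dz_I\wedge d\bar{z}_J\}$ and glue componentwise, the coefficients remaining in $\mathcal{F}$, $\bar{\mathcal{F}}$, or $\mathcal{G}$ as appropriate. The one point the paper makes explicit that you leave implicit is the justification that the frame forms and the operators $\del_{z_2}$, $\delbar_{z_2}$ are globally well-defined on $X$ at all --- namely, that the local coordinates come from the universal cover $\C^2$ and every transition map is a translation, so $dz^j_I\wedge d\bar{z}^j_J=dz^k_I\wedge d\bar{z}^k_J$ on overlaps and $\delbar_{z_2}$-closedness of coefficients is chart-independent; without this, your appeals to ``kernel of a sheaf morphism'' and to a pointwise independent global frame would not be available.
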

\begin{proof}
First, we show that $\mathcal{F}^{p,q}$ is a sheaf. 
Take an open set $U\subset X$ and a sufficiently fine open covering $\{U_j\}$ of $U$. 
Let $f$ and $g$ be elements in $\mathcal{F}^{p,q}(U)$ such that $f|_{U_j}=g|_{U_j}$ on each $U_j$. 
Then, $f=g$ on $U$ by their continuity. 
Assume that $f_j=f_k$ on each $U_{jk}\coloneqq U_j\cap U_k$ for a family of sections $\{f_j, U_j\}$. 
Now, a local coordinate $z^j=(z^j_1, z^j_2)\in U_j\subset X$ is given by the standard coordinate of the universal covering $\C^2$ of $X$. 
Hence, every coordinate transformation between any $U_j$ and $U_k$ is translation. 
Let $\phi^{jk}: U_{jk}\to U_{jk}; z^k \mapsto z^j$ be a coordinate transformation on each $U_{jk}$. 
Then $dz^j_I\wedge d\bar{z}^j_J= dz^k_I\wedge d\bar{z}^k_J$ holds on each $U_{jk}$. 
We have that 
\begin{align*}
f_j=\sum_{I,J} f^j_{I,J}(z^j) dz^j_I\wedge d\bar{z}^j_J 
=\sum_{I,J} f^j_{I,J}(\phi^{jk}(z^k)) dz^k_I\wedge d\bar{z}^k_J
\end{align*}
holds on each $U_{jk}$. 
Therefore, the element $F\in\mathcal{A}^{p,q}(U)$ defined as $F|_{U_j}\coloneqq f_j$ on each $U_j$ is in $\mathcal{F}^{p,q}(U)$ because $f^j_{I,J}(\phi^{jk}(z^k))$ is $\delbar_{z_2}$-closed. 
From this, $\mathcal{F}^{p,q}$ is a sheaf on $X$. 
By a similar method, we can prove that $\bar{\mathcal{F}}^{p,q}$ and $\mathcal{G}^{p,q}$ are sheaves.

\end{proof}
\end{subsection}
\begin{subsection}{Local resolution lemma}\label{local}
In this subsection, we demonstrate the validity of the local resolution lemma, which is equivalent to a certain complex is partially exact. 
This complex is used in the study of Bott--Chern and Aeppli cohomologies. 

Let us consider the operator $\del +\delbar : \mathcal{A}^{(p,q)+1} \coloneqq \mathcal{A}^{p+1,q} \oplus \mathcal{A}^{p,q+1} \to \mathcal{A}^{p+1,q+1}$ defined by $(f,g) \mapsto \delbar f + \del g$. 
We then consider the following complex: 
\begin{align}
\cdots \overset{\del\delbar}{\to} \mathcal{A}^{p-1,q-1} \overset{d}{\to} \mathcal{A}^{(p-1,q-1)+1} \overset{\del +\delbar}{\to} \mathcal{A}^{p,q} \overset{\del\delbar}{\to} \mathcal{A}^{p+1,q+1} \overset{d}{\to} \cdots. 
\end{align}
This complex is partially exact. 
That is, the following lemma holds.

\begin{lem}[{\cite[Lemma $5.15$]{DGMS} (See also \cite[Lemma $4.1$]{Sch}.)}]\label{Sch_ex}
Let $U \subset \C^n$ be a sufficiently small ball. 
\begin{enumerate}[(i)]
\item Let $\theta$ be a $C^\infty$ $k$-form on $U$, and let $p_1$ and $p_2$ be positive integers. 
Assume that $k\geq1$ and the non-zero components of $\theta$ have bidegree $(p,q)$ satisfying $p_1 \leq p \leq p_2$. 
If $\theta$ is $d$-closed, then there exists a $(k-1)$-form $\alpha$ such that $\theta=d\alpha$ and its non-zero components have bidegree $(p,q)$ with $p_1\leq p \leq p_2-1$. 

\item Let $\theta$ be a $d$-closed $C^\infty$ $(p,q)$-form on $U$. 
Then, the following assertions hold:
\begin{enumerate}
\item If $p\geq1$ and $q\geq1$, $\theta\in \del\delbar\mathcal{A}^{p-1,q-1}(U)$.
\item If $p\geq1$ and $q=0$, $\theta\in \del\Omega^{p-1}(U)$.
\item If $p=0$ and $q\geq1$, $\theta\in \delbar\,\bar{\Omega}^{q-1}(U)$.
\item If $p=q=0$, $\theta$ is constant.
\end{enumerate}

\item Let $\theta$ be a $\del\delbar$-closed $C^\infty$ $(p,q)$-form on $U$. 
Then, $\theta$ can be expressed as the sum of a $\delbar$-closed form and a $\del$-closed form on $U$. 
In other words,  
\begin{enumerate}
\item If $p\geq1$ and $q\geq1$, $\theta\in \delbar \mathcal{A}^{p,q-1}(U)+\del \mathcal{A}^{p-1,q}(U)$.
\item If $p\geq1$ and $q=0$, $\theta\in \Omega^{p}(U)+\del\mathcal{A}^{p-1,0}(U)$.
\item If $p=0$ and $q\geq1$, $\theta\in \delbar\mathcal{A}^{0,q-1}(U)+\overline\Omega^{q}(U)$.
\item If $p=q=0$, $\theta\in \mathcal{O}(U)+\overline{\mathcal{O}}(U)$.
\end{enumerate}
\end{enumerate}
\end{lem}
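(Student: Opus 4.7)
The plan is to assemble three classical local exactness results on the small ball $U$: the smooth Poincaré lemma, the Dolbeault $\delbar$-Poincaré lemma (and its conjugate $\del$-Poincaré), and the holomorphic Poincaré lemma, all of which are available on $U$ since $U$ is convex. Every assertion will follow by iterating these inputs with careful bidegree bookkeeping.

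For part (i), I would induct on the width $p_2 - p_1$, with the trivial case being the empty bidegree range. In the inductive step, extract the component $\theta^{p_2, q_2}$ of largest first index, where $q_2 := k - p_2$. Reading $d\theta = 0$ at bidegree $(p_2+1, q_2)$ yields $\del \theta^{p_2, q_2} = 0$. Since $p_2 \geq 1$, the local $\del$-Poincaré lemma (the conjugate of Dolbeault) when $q_2 \geq 1$, or the holomorphic Poincaré lemma when $q_2 = 0$ (in which case $\theta^{p_2, 0}$ is additionally $\delbar$-closed, hence holomorphic), produces a $(p_2-1, q_2)$-form $\eta$ with $\del \eta = \theta^{p_2, q_2}$. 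The replacement $\theta \mapsto \theta - d\eta$ remains $d$-closed, has its top stratum killed, and introduces only a $(p_2-1, q_2+1)$ component, so the inductive hypothesis produces the desired $\alpha$.

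For part (ii), subcases (b), (c), (d) are immediate: (d) uses the scalar Poincaré lemma; (b) uses that a smooth $d$-closed $(p,0)$-form is automatically holomorphic and then applies the holomorphic Poincaré lemma; (c) is its conjugate. The main content is subcase (a), the local $\del\delbar$-lemma. The approach would be to apply the smooth Poincaré lemma to write $\theta = d\eta$ on $U$, then peel the extreme-first-index components of $\eta$ successively via the holomorphic and $\del$-/$\delbar$-Poincaré lemmas (exactly as in part (i) and its conjugate) to reduce $\eta$, modulo $d$-exact forms, to a sum $\eta^{p-1,q} + \eta^{p,q-1}$. The mixed-bidegree components of $d\eta = \theta$ then force $\delbar \eta^{p-1,q} = 0$ and $\del \eta^{p,q-1} = 0$, so Dolbeault and its conjugate supply $(p-1,q-1)$-forms $A, B$ with $\eta^{p-1,q} = \delbar A$ and $\eta^{p,q-1} = \del B$; whence $\theta = \del\delbar(A - B)$.

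Part (iii) is parallel in spirit: for a $\del\delbar$-closed $\theta$ of bidegree $(p,q)$, the form $\delbar \theta$ is $\del$-closed of bidegree $(p, q+1)$, so a $\del$-Poincaré or holomorphic Poincaré step produces a primitive $\nu$ with $\del \nu = \delbar \theta$; then $\theta - \delbar \nu'$ for a suitable Dolbeault primitive $\nu'$ of $\nu$ becomes $\delbar$-closed, and $\theta$ splits as the sum of this $\delbar$-closed summand and a $\del$-exact (hence $\del$-closed) remainder. The boundary subcases fall out of the same holomorphic-versus-smooth Poincaré dichotomy as in part (ii). The main obstacle throughout is the bidegree bookkeeping in the peeling arguments and correctly invoking the holomorphic Poincaré lemma in place of its smooth counterpart at the boundary bidegrees ($p = 0$ or $q = 0$); once these are handled carefully, no further idea is required.
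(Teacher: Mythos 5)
Your part (ii) is correct, and it is in substance the paper's own argument: the paper packages the same peeling as its part (i) by taking a smooth Poincar\'e primitive $\beta$ with $\theta=d\beta$ and clearing the extreme bidegree strata \emph{of the primitive} via Dolbeault and conjugate-Dolbeault corrections $\beta\mapsto\beta-d\gamma$. But your proof of part (i) itself has a genuine gap: the induction on the width $p_2-p_1$ never closes. Peeling $\theta$ at width $0$ does not reach your ``empty range'' base case --- killing the unique stratum $\theta^{p_1,q}$ leaves the pure $(p_1-1,q+1)$-form $-\delbar\eta$, so the window slides downward with width still $0$; and the width-$0$ instance of the statement is false anyway, since it would force $\alpha=0$ and hence $\theta=0$ (take $\theta=dz_1\wedge d\bar{z}_1$). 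Concretely, already in the width-$1$ case that (ii) requires, after one peel you hold a $d$-closed pure $(p_1,q+1)$-form which you must write as $d$ of a \emph{pure} $(p_1,q)$-form, i.e.\ you need a $\delbar$-primitive that is simultaneously $\del$-closed --- essentially the local $\del\delbar$-statement you are in the middle of proving, so your (i) is circular at its terminal step. Peeling the primitive instead of $\theta$ (as the paper does, and as you yourself do inside (ii)) avoids this: each correction pushes the unwanted stratum strictly toward the surviving window $[p_1,p_2-1]$, and the two one-sided sweeps terminate when $p_1<p_2$, the only case ever used. A further slip in your (i): when $q_2=0$ the component $\theta^{p_2,0}$ need \emph{not} be $\delbar$-closed if $\theta$ has a $(p_2-1,1)$-component, since $d\theta=0$ only gives $\delbar\theta^{p_2,0}=-\del\theta^{p_2-1,1}$; what you need there is the smooth conjugate Dolbeault lemma in the row $q=0$ (exactness of $0\to\bar{\mathcal{O}}\to\mathcal{A}^{0,0}\overset{\del}{\to}\mathcal{A}^{1,0}\to\cdots$), not the holomorphic Poincar\'e lemma.

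Part (iii) has a second gap. A primitive $\nu$ with $\del\nu=\delbar\theta$ produced by a bare $\del$-Poincar\'e step need not be $\delbar$-closed (one only knows $\delbar\nu$ is $\del$-closed), so the ``Dolbeault primitive $\nu'$ of $\nu$'' you invoke need not exist; moreover $\theta-\delbar\nu'$ is not even of bidegree $(p,q)$ --- the correction must be the $\del$-exact term $\del\nu'$, for which $\delbar(\theta+\del\nu')=\delbar\theta-\del\nu=0$. The missing idea is the paper's: since $\del\delbar\theta=0$, the form $\del\theta$ (equivalently $\delbar\theta$) is $d$-closed of pure type, so part (ii) --- not a bare Poincar\'e step --- applies to it and yields a primitive of the special form, e.g.\ $\delbar\theta=\del\delbar\gamma$ with $\gamma\in\mathcal{A}^{p-1,q}(U)$ when $p\geq1$; then $\theta+\del\gamma$ is $\delbar$-closed and $\theta=(\theta+\del\gamma)-\del\gamma$ is the desired splitting, the listed memberships following from Dolbeault, its conjugate, and holomorphicity or anti-holomorphicity at the boundary bidegrees. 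With these two repairs your outline coincides with the paper's proof.
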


\begin{proof}
$(i)$: 
Let $\theta$ be a $d$-closed form that satisfies the assumption. 
By Poincar\'{e}'s lemma, there exists a $(k-1)$-form $\beta$ such that $\theta=d\beta$. 
If $p_1=0$ and $p_2=k$, the proof is complete. 
Therefore, it suffices to show for $k\geq2$ and $p_1>0$. 
When $p_1\geq1$, the non-zero components of $\theta$ all have bidegree $(p,q)$ with $p\geq1$. 
Hence, $\delbar \beta^{0,k-1}=0$, where $\beta^{p,q}$ denotes the $(p,q)$-components of $\beta$. 
By Dolbeault's lemma, there exists a  $(0,k-2)$-form $\gamma$ such that $\beta^{0,k-1}=\delbar \gamma$. 
We set $\tilde{\beta}\coloneqq \beta- d\gamma$. 
Then $d\tilde{\beta}=d\beta=\theta$ and $\tilde{\beta}^{0,k-1}=0$ by the definition of $\tilde{\beta}$. 
By replacing $\beta$ with $\tilde{\beta}$, we can choose $\beta$ to have no $(0,k-1)$-component. 
This $\beta$ satisfies $\delbar \beta^{1,k-2}=0$. 
Reapplying Dolbeault's lemma to $\beta$, we can further choose $\beta$ to have no $(1, k-2)$-component. 
Finally, by iterating this argument, we obtain a form $\beta$ with no $(p,q)$-components for $p<p_1$. 

On the other hand, this $\beta$ also satisfies $\del\beta^{k-1,0}=0$ since $p_2<k$. 
Using the conjugate of Dolbeault's lemma, we obtain a $(k-2,0)$-form $\hat{\gamma}$ such that $\beta^{k-1,0}=\del\hat{\gamma}$. 
Then $d\hat{\beta}=d\beta=\theta$, where $\hat{\beta}\coloneqq \beta- d\hat{\gamma}$. 
By replacing $\beta$ with $\hat{\beta}$, we may assume that $\beta$ has no $(k-1,0)$-component. 
Repeating this argument as before, we obtain a form $\beta$ with no $(p_2,k-p_2-1)$-component.\\

$(ii)$: 
(d) is obvious. 
(b) and (c) are equivalent. 
Hence, it suffices to show (a) and (b). 
Assume that $p\geq1$. 
Set $\tilde{\theta}^{p-1,q+1}\coloneqq 0$ and $\tilde{\theta}^{p,q}\coloneqq \theta$. 
Applying the assertion $(1)$ to $\theta=\tilde{\theta}^{p-1,q+1} + \tilde{\theta}^{p,q}$, we obtain a $(p-1,q)$-form $\alpha$ such that $\theta=d\alpha$. 
This $\alpha$ satisfies $\theta=\del\alpha$ and $\delbar\alpha=0$. 
Moreover, if $q>1$, Dolbeault's lemma ensures the existence of a form $\gamma$ such that $\alpha=\delbar\gamma$. 

$(iii)$: 
Set $\theta^{p+1,q}\coloneqq \del\theta$. 
Since $\theta^{p+1,q}$ is $d$-closed, assertion $(2)$ guarantees the existence of a $(p,q)$-form $\alpha$ such that $\theta^{p+1,q}=\del\alpha$ and $\delbar\alpha=0$. 
Thus, the assertion is established by considering the representation $\theta=(\theta-\alpha)+\alpha$. 
\end{proof}

\end{subsection}


\begin{subsection}{Known cohomologies for two-dimensional theta toroidal groups}\label{Dol}
In this subsection, we recall the known cohomologies for two-dimensional theta toroidal groups $X= \C^2/\Lambda_{\tau,p,q}$. 

Let $X=\C^2/\Lambda_{\tau,p,q}$ be a toroidal group. 
A fine resolution of the sheaf $\Omega^p$ on $X$
\begin{align*}
0 \to \Omega^p \overset{i}{\hookrightarrow} \mathcal{A}^{p,0} \overset{\delbar}{\to} \mathcal{A}^{p,1} \overset{\delbar}{\to} \mathcal{A}^{p,2} \to 0
\end{align*}
induces the Dolbeault cohomology. 
We observe another resolution of $\Omega^p$.
\begin{lem}[{\cite[Proposition $3.4$]{Kazama}(See also \cite[p.$49$]{AK}.)}]\label{Kazama_sq}
The following sequence of sheaves on $X$ is exact:
\begin{align}\label{ex0_1}
0\to\Omega^p \overset{i}{\hookrightarrow} \mathcal{F}^{p,0} \overset{\delbar}{\to} \mathcal{F}^{p,1}  \to 0. 
\end{align}
\end{lem}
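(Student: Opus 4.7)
The plan is to verify exactness stalk-wise on small polydisc neighborhoods in the universal cover $\C^2$ (everything is local, so we may work with the standard coordinates $(z_1,z_2)$ on such a polydisc $U=U_1\times U_2\subset\C^2$, under which $\mathcal{F}$ is precisely the sheaf of $C^\infty$ functions that are holomorphic in $z_2$).

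First I would dispose of the two formal points. The map $i$ is injective because it is an inclusion of subsheaves of $\mathcal{A}^{p,0}$. For exactness at $\mathcal{F}^{p,0}$, let $\omega=\sum_I f_I\,dz_I\in\mathcal{F}^{p,0}(U)$ be $\delbar$-closed. Since $\delbar_{z_2}f_I=0$ already holds by definition of $\mathcal{F}$, one has $\delbar\omega=\sum_I (\delbar_{z_1}f_I)\,d\bar z_1\wedge dz_I$, so the closedness condition is equivalent to $\delbar_{z_1}f_I=0$ for every $I$. Combined with $\delbar_{z_2}f_I=0$, each $f_I$ is then holomorphic in both variables, and $\omega\in\Omega^p(U)$.

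The substantive content is the surjectivity of $\delbar:\mathcal{F}^{p,0}\to\mathcal{F}^{p,1}$. A germ in $\mathcal{F}^{p,1}$ is represented by $\eta=\sum_I g_I\,dz_I\wedge d\bar z_1$ with $g_I\in\mathcal{F}(U)$, and finding $\omega$ with $\delbar\omega=\eta$ reduces component-wise to solving $\partial f/\partial\bar z_1=g$ for a single $g\in\mathcal{F}(U)$, where the solution $f$ is also required to lie in $\mathcal{F}$. I would write the solution explicitly, on a slightly smaller polydisc $V=V_1\times V_2\Subset U$, by the Cauchy--Pompeiu formula in the variable $z_1$ alone, with $z_2$ treated as a holomorphic parameter:
\begin{align*}
f(z_1,z_2)\coloneqq -\frac{1}{\pi}\int_{\C}\frac{\chi(\zeta)\,g(\zeta,z_2)}{\zeta-z_1}\,d\xi\,d\eta,\qquad \zeta=\xi+\sqrt{-1}\eta,
\end{align*}
where $\chi\in C^\infty_c(U_1)$ is a cutoff equal to $1$ on $V_1$.

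The main (classical) point to check is that this $f$ stays in $\mathcal{F}$. Because the kernel $(\zeta-z_1)^{-1}$ is independent of $z_2$ and the integrand vanishes outside a compact set in $\zeta$, one may differentiate under the integral, and the hypothesis $\partial g/\partial\bar z_2=0$ yields $\partial f/\partial\bar z_2=0$. The usual Cauchy--Pompeiu computation gives $\partial f/\partial\bar z_1=g$ on $V$ and $f\in C^\infty(V)$, so $f\in\mathcal{F}(V)$ as required. Applying this coefficient-wise to $\eta$ produces the desired $\omega\in\mathcal{F}^{p,0}(V)$ with $\delbar\omega=\eta$, completing the exactness of the sequence.
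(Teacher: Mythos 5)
Your proof is correct and takes essentially the same route as the paper: since the coefficients of $\mathcal{F}^{p,\bullet}$ are $\delbar_{z_2}$-closed, the whole sequence reduces to a $\delbar_{z_1}$-problem with $z_2$ as a holomorphic parameter, which the paper settles by invoking Dolbeault's lemma. Your explicit Cauchy--Pompeiu integral, whose kernel is independent of $z_2$, simply spells out the detail the paper leaves implicit --- that the one-variable solution operator preserves holomorphy in the parameter $z_2$, so the solution stays in $\mathcal{F}$.
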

\begin{proof}
It suffices to show that the map $\delbar : \mathcal{F}^{p,0} \to \mathcal{F}^{p,1}$ is surjective. 
Let $U\subset X$ be a sufficiently small open ball. 
Note that elements of $\mathcal{F}(U)$ and $\mathcal{F}^{0,1}(U)$ are $\delbar_{z_2}$-closed. 
Then, this complex can be regarded as the complex $(\mathcal{F}^{p,q}, \delbar_{z_1})$. 
Subsequently, we verify that the map $\delbar_{z_1}: \mathcal{F}^{p,0}(U) \to \mathcal{F}^{p,1}(U)$ is surjective. 
By the definition of $\mathcal{F}^{p,1}(U)$, all the elements in $\mathcal{F}^{p,1}(U)$ are $\delbar_{z_1}$-closed. 
Therefore $\delbar_{z_1}$ is surjective by Dolbeault's Lemma. 
\end{proof}

Let $\mathscr{U}\coloneqq\{U_j\}$ be a finite open covering of the elliptic curve $\C/\langle 1,\tau \rangle$, where each $U_j$ is a coordinate open ball. 
Then $\mathscr{V}\coloneqq\{\pi^{-1}(U_j)\}$ is a finite open covering of $X$. 
\begin{lem}[{\cite[Proposition $2.4$]{Kazama}(See also \cite[Lemma $2.2.4$]{AK}}]\label{Fvani}
Let $X= \C^2/\Lambda_{\tau,p,q}$ be a two-dimensional toroidal group. 
Then, for any $k\geq1$, the $k$-the cohomology $H^k(\mathscr{V}, \mathcal{F}^{p,q})$ vanishes. 
\end{lem}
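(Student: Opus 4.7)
The plan is to reduce the vanishing to a partition of unity argument pulled back from the base elliptic curve. The key observation is that although $\mathcal{F}^{p,q}$ is not a fine sheaf in the strict sense---its coefficients are required to be $\delbar_{z_2}$-closed, so arbitrary $C^\infty$ bump functions cannot be used as multipliers---it is naturally a module over the pulled-back sheaf $\pi^{-1}C^\infty_{\C/\langle 1,\tau\rangle}$: any function depending only on $(z_1,\bar z_1)$ is trivially holomorphic in $z_2$, and multiplying a section $\sum_{I,J}f_{I,J}\,dz_I\wedge d\bar z_J\in\mathcal{F}^{p,q}(V)$ by such a function preserves both the index constraint $J\subset\{1\}$ and the $\delbar_{z_2}$-closedness of each coefficient.

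Since $\mathscr{V}=\{\pi^{-1}(U_j)\}$ and $\C/\langle 1,\tau\rangle$ is a compact $C^\infty$ manifold, I would fix a $C^\infty$ partition of unity $\{\rho_j\}$ on $\C/\langle 1,\tau\rangle$ subordinate to $\mathscr{U}$ and pull it back to a partition of unity $\{\pi^*\rho_j\}$ on $X$ subordinate to $\mathscr{V}$. By the observation above, each $\pi^*\rho_j$ acts on $\mathcal{F}^{p,q}$ by multiplication.

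With this device in hand, the proof reduces to the standard \v{C}ech-theoretic argument showing that fine sheaves are acyclic. Given a cocycle $c=\{c_{i_0\cdots i_k}\}\in Z^k(\mathscr{V},\mathcal{F}^{p,q})$ with $k\geq 1$, I would set
\begin{align*}
b_{i_1\cdots i_k}\coloneqq \sum_j \pi^*\rho_j\cdot c_{j\,i_1\cdots i_k},
\end{align*}
where each summand, a priori defined only on $\pi^{-1}(U_j\cap U_{i_1}\cap\cdots\cap U_{i_k})$, is extended by zero to $\pi^{-1}(U_{i_1}\cap\cdots\cap U_{i_k})$. This extension is $C^\infty$ because $\rho_j$ is compactly supported inside $U_j$, and it remains a section of $\mathcal{F}^{p,q}$ because $\pi^*\rho_j$ is independent of $z_2$. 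A direct computation using $\sum_j\pi^*\rho_j\equiv 1$ and the cocycle relation $\delta c=0$ then gives $\delta b=c$, establishing $H^k(\mathscr{V},\mathcal{F}^{p,q})=0$.

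No step of this plan is genuinely difficult; the only point requiring a moment's care is verifying that the extension by zero of $\pi^*\rho_j\cdot c_{j\,i_1\cdots i_k}$ is a legitimate section of $\mathcal{F}^{p,q}$ on the larger intersection, which is immediate from the compact support of $\rho_j$ in $U_j$ and the independence of $\pi^*\rho_j$ from the fiber coordinate $z_2$. Conceptually, the lemma expresses that pulling back smooth partitions of unity from the elliptic base turns $\mathcal{F}^{p,q}$ into a sheaf that is relatively fine with respect to the cover $\mathscr{V}$, and the same argument applies verbatim to $\bar{\mathcal{F}}^{p,q}$ and $\mathcal{G}^{p,q}$ should it be needed later.
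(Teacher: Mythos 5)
Your proof is correct and is essentially the paper's own argument: the paper uses exactly the same pulled-back partition of unity $\{\pi^\ast\rho_j\}$, observing that each $\pi^\ast\rho_j$ is $\delbar_{z_2}$-closed (being independent of $z_2$), and writes down the same $\delta$-primitive $\sum_j(\pi^\ast\rho_j)f_{j,j_0,\dots,j_{k-1}}$ for a given cocycle. The only difference is that the paper's proof additionally computes $H^k(\pi^{-1}(U_j),\mathcal{F}^{p,q})=0$ via the K\"unneth formula --- a step not needed for the literal \v{C}ech statement $H^k(\mathscr{V},\mathcal{F}^{p,q})=0$ that you prove, but which makes $\mathscr{V}$ acyclic in the Leray sense, so that the vanishing transfers to the sheaf cohomology of $X$ as used in Proposition \ref{Kazama_Dol}.
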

\begin{proof}
Refer to \cite[Lemma $2.2.4$]{AK}. 
It suffices to prove that the covering $\mathscr{V}$ is acyclic for the sheaf $\mathcal{F}^{p,q}$. 
Note that $\pi^{-1}(U_j) \cong U_j \times \C^\ast$, and 
\begin{align*}
\mathcal{F}^{p,q}(U_j\times\C^\ast) \cong \bigoplus_{p'+p''=p} \mathcal{A}^{p',q}(U_j) \otimes \Omega^{p''}(\C^\ast) \quad {\text{(See Remark \ref{tensor})}}. 
\end{align*}
By K\"{u}nneth's formula \cite{Kaup} and simple calculations, we get $H^k(U_j\times\C^\ast, \mathcal{F}^{p,q})=0$ for any $k\geq1$. 

Let $\{\rho_j\}$ be a partition of unity subordinate to $\mathscr{U}$. 
Then, each $\pi^\ast\rho_j$ is $\delbar_{z_2}$-closed. 
For a cocycle $\{(U_{j_0,\dots,j_k}, f_{j_0,\dots,j_k})\}\in \check{Z}^k(\mathscr{V}, \mathcal{F}^{p,q})$, a $\delta$-primitive solution is given by
\begin{align*}
\{ (U_{j_0,\dots,j_{p-1}}, \sum_j (\pi^\ast\rho_j) f_{j,j_0,\dots,j_{k-1}}) \}\in \check{C}^{k-1}(\mathscr{V}, \mathcal{F}^{p,q}).
\end{align*}
Therefore, $\mathscr{V}$ is acyclic for the sheaf $\mathcal{F}^{p,q}$.
\end{proof}

\begin{remark}\label{tensor}
Let $U_j \in \mathscr{U}=\{U_j\}_{j\in\Z_{>0}}$ be as in the proof of Lemma \ref{Fvani}. 
We take an increasing sequence $\{D_k\}$ (resp. $\{\tilde{D}_k\}$) of relatively compact subsets $D_k\Subset U_j$ (resp. $\tilde{D}_k\Subset \C^\ast$) such that $\cup D_k=U_j$ (resp. $\cup \tilde{D}_k=\C^\ast$). 
For the sheaf $\mathcal{F}$ on a toroidal group $X= \C^2/\Lambda_{\tau,p,q}$, we can obtain the following approximation in terms of Sobolev spaces: 
\begin{align*}
\mathcal{F} (U_j\times\C^\ast) = \lim_{\underset{k\in\Z_{>0}}{\longleftarrow}} \left( H^k(D_k \times \tilde{D}_k) \cap \Ker \delbar_{z_2} \right), 
\end{align*}
where the space $H^k(D_k\times \tilde{D}_k) (=W^{k,2}(D_k\times \tilde{D}_k))$ is a Sobolev space and, in particular, a separable Hilbert space. 
By means of the (topological) tensor product $\hat{\otimes}$ of Hilbert spaces using orthonormal bases, we have 
\begin{align*}
 H^k(D_k \times \tilde{D}_k) \cap \Ker \delbar_{z_2}  \cong H^k(D_k) \hat{\otimes} (H^k (\tilde{D}_k)\cap\Ker \delbar) . 
\end{align*}
Since $\mathcal{A}(U_j)= \lim_{\underset{k\in\Z_{>0}}{\longleftarrow}} \left( H^k(D_k) \right)$ and $\mathcal{O}(\C^\ast) =  \lim_{\underset{k\in\Z_{>0}}{\longleftarrow}} \left(H^k (\tilde{D}_k)\cap\Ker \delbar \right) $ are nuclear Fr\'{e}chet spaces, it follows that
\begin{align*}
 \lim_{\underset{k\in\Z_{>0}}{\longleftarrow}} \left(H^k(D_k) \hat{\otimes} (H^k (\tilde{D}_k)\cap\Ker \delbar) \right) \cong \mathcal{A}(U_j) \otimes \mathcal{O}(\C^\ast), 
\end{align*}
where $\otimes$ denotes the tensor product over $\C$ (see \cite{PG}).
Therefore, we conclude that  $\mathcal{F}(U_j\times \C^\ast )\cong \mathcal{A}(U_j) \otimes \mathcal{O}(\C^\ast)$. 
\qed
\end{remark}

From Lemma $\ref{Kazama_sq}$ and Lemma $\ref{Fvani}$, we obtain the following theorem. 
\begin{prop}[{\cite[Proposition $3.4$]{Kazama}}]\label{Kazama_Dol}
\begin{align*}
H^{p,q} (X, \mathcal{O}) \cong \frac{\Ker (\delbar: \mathcal{F}^{p,q}(X) \to \mathcal{F}^{p,q+1}(X))}{\Im (\delbar: \mathcal{F}^{p,q-1}(X) \to \mathcal{F}^{p,q}(X))}.
\end{align*}
\end{prop}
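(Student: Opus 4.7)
The plan is to deduce the proposition from Lemma \ref{Kazama_sq} and Lemma \ref{Fvani}, together with Dolbeault's theorem $H^{p,q}(X,\mathcal{O})\cong H^q(X,\Omega^p)$.

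First I would upgrade the \v{C}ech-vanishing of Lemma \ref{Fvani} to genuine sheaf-cohomology vanishing of $\mathcal{F}^{p,q}$ on $X$. The K\"unneth argument used in the proof of Lemma \ref{Fvani} for $\pi^{-1}(U_j)\cong U_j\times\C^\ast$ applies verbatim to each intersection $\pi^{-1}(U_{j_0\dots j_r})\cong U_{j_0\dots j_r}\times\C^\ast$, provided one begins with a covering $\mathscr{U}$ of $\C/\langle 1,\tau\rangle$ by coordinate balls whose finite intersections are disjoint unions of balls. It follows that $\mathscr{V}$ is a Leray covering of $X$ for each $\mathcal{F}^{p,q}$, so Leray's theorem combined with Lemma \ref{Fvani} gives
\begin{equation*}
H^k(X,\mathcal{F}^{p,q})\cong\check{H}^k(\mathscr{V},\mathcal{F}^{p,q})=0\quad(k\geq 1).
\end{equation*}

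Next, I would plug this acyclicity into the long exact sequence in sheaf cohomology attached to the short exact sequence of Lemma \ref{Kazama_sq},
\begin{equation*}
0\longrightarrow\Omega^p\hookrightarrow\mathcal{F}^{p,0}\overset{\delbar}{\longrightarrow}\mathcal{F}^{p,1}\longrightarrow 0.
\end{equation*}
Since both $\mathcal{F}^{p,0}$ and $\mathcal{F}^{p,1}$ are acyclic on $X$, this long exact sequence collapses to the isomorphisms $H^0(X,\Omega^p)\cong\Ker\bigl(\delbar\colon\mathcal{F}^{p,0}(X)\to\mathcal{F}^{p,1}(X)\bigr)$ and $H^1(X,\Omega^p)\cong\mathcal{F}^{p,1}(X)/\delbar\,\mathcal{F}^{p,0}(X)$, together with $H^q(X,\Omega^p)=0$ for $q\geq 2$. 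Combining this with Dolbeault's isomorphism and the observation that $\mathcal{F}^{p,q}=0$ for $q\geq 2$ (since the index set $J$ is restricted to subsets of $\{1\}$, so the right-hand side of the claimed formula also vanishes in those degrees), we recover the stated identity in every bidegree.

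I do not anticipate a serious technical obstacle; the substance of the proposition is already encoded in Lemma \ref{Kazama_sq} and Lemma \ref{Fvani}. The only step requiring mild care is the passage from \v{C}ech-vanishing on $\mathscr{V}$ to sheaf-cohomology vanishing of $\mathcal{F}^{p,q}$, which amounts to confirming that the K\"unneth calculation used on each $\pi^{-1}(U_j)$ also applies on all finite intersections so that $\mathscr{V}$ is genuinely a Leray covering rather than merely \v{C}ech-acyclic.
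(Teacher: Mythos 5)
Your argument is correct and follows essentially the same route as the paper, which deduces the proposition directly from Lemma \ref{Kazama_sq} and Lemma \ref{Fvani} (the resolution of $\Omega^p$ by $\mathscr{V}$-acyclic sheaves plus the abstract de Rham/Leray theorem, with Dolbeault's isomorphism implicit in the notation $H^{p,q}(X,\mathcal{O})$). Your added care in checking that the K\"unneth vanishing holds on all finite intersections, so that $\mathscr{V}$ is genuinely a Leray covering, and your observation that $\mathcal{F}^{p,q}=0$ for $q\geq 2$ keeps both sides consistent in those degrees, simply make explicit details the paper leaves to the reader.
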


\begin{lem}[{\cite[Theorem $4.3$]{Kazama}(See also \cite[$2.2.5$ Proposition]{AK}.)}]\label{dol_repre}
Let $X= \C^2/\Lambda_{\tau,p,q}$ be a two-dimensional theta toroidal group. 
Then, every $w\in\Ker (\delbar: \mathcal{F}^{p,q}(X) \to \mathcal{F}^{p,q+1}(X))$ is $\delbar$-exact, up to a form on $X$ with constant coefficients. 
That is, there exists $\eta\in\mathcal{F}^{p,q-1}(X)$ such that 
\begin{align*}
w=\delbar\eta + \C\bigwedge^p\{dz_1,dz_2\} \wedge \bigwedge^q\{d\bar{z}_1\}
\end{align*}
\end{lem}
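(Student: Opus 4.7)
The plan is to split into the two relevant bidegrees $q = 0$ and $q = 1$, since $\mathcal{F}^{p,q}$ vanishes for $q \geq 2$ (the index set $J \subset \{1\}$ forces $|J| \leq 1$). For $q = 0$, the statement is essentially immediate: if $w = \sum_I f_I \, dz_I \in \mathcal{F}^{p,0}(X)$ satisfies $\delbar w = 0$, then each coefficient $f_I$ is both $\delbar_{z_1}$-closed (from $\delbar w = 0$) and $\delbar_{z_2}$-closed (from $f_I \in \mathcal{F}$), hence a holomorphic function on the toroidal group $X$. The defining property of a toroidal group forces $f_I$ to be constant, so one takes $\eta = 0$ and $c = w$.

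The heart of the proof is $q = 1$. Every $w \in \mathcal{F}^{p,1}(X)$ is automatically $\delbar$-closed: the $\delbar_{z_2}$-part of $\delbar w$ vanishes because the coefficients lie in $\mathcal{F}$, and the $\delbar_{z_1}$-part vanishes because $d\bar z_1 \wedge d\bar z_1 = 0$. What must be shown is that, modulo constant-coefficient forms, every such $w$ lies in $\delbar \mathcal{F}^{p,0}(X)$. Writing $w = \sum_I f_I \, dz_I \wedge d\bar z_1$, the task reduces to solving, for each multi-index $I$,
\[
\frac{\partial g_I}{\partial \bar z_1} = f_I - c_I, \qquad g_I \in \mathcal{F}(X),
\]
with a suitable constant $c_I \in \C$.

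The approach is a Fourier expansion adapted to the bundle structure $\pi : X \to E = \C / \langle 1, \tau \rangle$. Along each $\C^\ast$-fiber the function $f_I$ is holomorphic in $z_2$ and hence admits a Laurent expansion
\[
f_I(z_1, \bar z_1, z_2) = \sum_{n \in \Z} a_n(z_1, \bar z_1) \, e^{2 \pi \i n z_2},
\]
where, because of the translations $(1, p)$ and $(\tau, q)$ in $\Lambda_{\tau, p, q}$, each $a_n$ is a smooth section of a flat line bundle $L_n \to E$ determined by the characters $(e^{-2 \pi \i n p}, e^{-2 \pi \i n q})$. Expanding $a_n$ further in the real periodic variables $(t_1, t_3)$ decomposes the equation mode by mode. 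The $n = 0$ contribution is an honest $\delbar$-equation on the elliptic curve $E$, for which standard Hodge theory on $E$ produces the unique obstruction $c_I$, realized as the average of $a_0$; this supplies precisely the constant-coefficient representative in the statement. For each $n \neq 0$, formal inversion of $\delbar_{z_1}$ on $L_n$ yields coefficients $b_n$ whose Fourier modes involve small denominators bounded below by $\dist(\Z^2, (np, nq))$.

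The main obstacle, and the point at which the theta toroidal group hypothesis enters decisively, is $C^\infty$-convergence of the formal solution $\eta = \sum_n b_n(z_1, \bar z_1) e^{2 \pi \i n z_2}$. By the theta estimate $(\ref{theta})$, these denominators decay no faster than $C \delta^n$ with $0 < \delta < 1$, while the Fourier coefficients of the smooth datum $f_I$ decay faster than any polynomial in all frequencies. Balancing these two estimates yields a smooth $\eta \in \mathcal{F}^{p,0}(X)$ solving the equation, completing the proof. This convergence step is the technical core of Kazama's original theorem, and it is precisely where the theta condition is indispensable; on a wild toroidal group the denominators can collapse too quickly and the formal solution fails to converge.
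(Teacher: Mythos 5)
Your route is, in substance, the standard one behind the cited result: the paper itself offers no argument for this lemma (its proof is literally ``Refer to \cite[$2.2.5$ Proposition]{AK}''), and the proof in Kazama and Abe--Kopfermann proceeds by exactly your scheme --- observe $\mathcal{F}^{p,q}=0$ for $q\geq2$, settle $q=0$ by noting that $\delbar_{z_1}$- and $\delbar_{z_2}$-closed coefficients are holomorphic functions on $X$ and hence constant, then for $q=1$ expand fiberwise so that the $n$-th mode is a section of the flat line bundle on $E=\C/\langle1,\tau\rangle$ determined by $(np,nq)\bmod\Z^2$, extract the constant obstruction from the $n=0$ mode by Hodge theory on $E$, and invert $\delbar_{z_1}$ on the nontrivial modes with small denominators bounded below via the theta condition $(\ref{theta})$. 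Your identification of where the theta hypothesis enters, and why the construction fails on wild toroidal groups, is also correct.

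There is, however, one concrete slip at the step you yourself call the technical core. You claim convergence because the Fourier coefficients of the ``smooth datum'' decay faster than any polynomial in all frequencies, and that this balances the theta estimate. That is not enough: the theta condition only gives $\dist(\Z^2,(np,nq))\geq C\delta^{|n|}$, so inverting $\delbar_{z_1}$ inflates the $n$-th mode by a factor of order $\delta^{-|n|}$, which grows \emph{geometrically} in the fiber frequency, and super-polynomial decay --- which is all that $C^\infty$-smoothness provides --- does not dominate $\delta^{-|n|}$. What rescues the argument, and what your own setup already contains, is that $f_I\in\mathcal{F}(X)$ is holomorphic along the $\C^\ast$-fibers: Cauchy estimates on shifted contours (evaluating the $n$-th Laurent coefficient at fiber radius $T$ beyond the compact set under consideration) give bounds of the form $\sup|a_n|\lesssim C_T e^{-2\pi|n|T}$ for every $T>0$, i.e.\ decay faster than any geometric rate in $n$; choosing $T$ with $e^{-2\pi T}<\delta$ then beats the small denominators, with analogous estimates for all derivatives yielding $C^\infty$-convergence. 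Super-polynomial decay suffices only in the compact torus frequencies $(\sigma_1,\sigma_3)$, where the denominators do not degenerate for fixed $n$. So the convergence step as you state it would fail for a merely smooth datum in $\mathcal{A}^{p,1}(X)$; the hypothesis $w\in\mathcal{F}^{p,1}(X)$ is doing quantitative work precisely here, and your sentence should be corrected accordingly. With that repair the proof is complete and agrees with the cited argument.
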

\begin{proof}
Refer to \cite[$2.2.5$ Proposition]{AK}. 
\end{proof}

By Proposition $\ref{Kazama_Dol}$ and Lemma $\ref{dol_repre}$, we can determine the Dolbeault cohomology of $X$. 
Let $h_{\delbar}^{p,q}$ denote the dimension of the Dolbeault cohomology $H^{p,q}(X, \mathcal{O})$.
\begin{cor}[{\cite[Theorem $4.3$]{Kazama}}]\label{dol-cohom}
The Dolbeault numbers of two-dimensional theta toroidal groups are:
\begin{align}
 \begin{array}{ccccc}
     &  & h^{0,0}_{\delbar}=1 &  &  \\
     & h^{1,0}_{\delbar}=2 &  & h^{0,1}_{\delbar}=1 &  \\
    h^{2,0}_{\delbar}=1 &  & h^{1,1}_{\delbar}=2 &  & h^{0,2}_{\delbar}=0 \\
     & h^{2,1}_{\delbar}=1 &  & h^{1,2}_{\delbar}=0 &  \\
     &  & \phantom{\;.} h^{2,2}_{\delbar}=0 \;. &  & 
 \end{array}
 \end{align}
\end{cor}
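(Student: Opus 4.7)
The Dolbeault numbers are extracted by combining Proposition \ref{Kazama_Dol}, which replaces the full Dolbeault complex by the two-term $\mathcal{F}^{p,\bullet}$-complex, with Lemma \ref{dol_repre}, which furnishes constant-coefficient representatives for every cohomology class. I begin by disposing of the vanishings. Since the antiholomorphic multi-index $J$ in the definition of $\mathcal{F}^{p,q}$ is required to lie in $\{1\}$, one has $\mathcal{F}^{p,q}=0$ whenever $q\geq 2$, so the resolution $0\to\Omega^p\hookrightarrow\mathcal{F}^{p,0}\overset{\delbar}{\to}\mathcal{F}^{p,1}\to 0$ of Lemma \ref{Kazama_sq}, combined with the acyclicity of each $\mathcal{F}^{p,q}$ on $\mathscr{V}$ given by Lemma \ref{Fvani}, forces $H^q(X,\Omega^p)=0$ for every $p$ whenever $q\geq 2$, accounting for the three vanishings $h^{0,2}_{\delbar}=h^{1,2}_{\delbar}=h^{2,2}_{\delbar}=0$.

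Next I compute the bidegrees $(p,0)$ and $(p,1)$. For $q=0$, $H^{p,0}(X,\mathcal{O})$ is the space of global holomorphic $p$-forms on $X$; lifting to the universal cover $\C^2$ and using that the only $\Lambda_{\tau,p,q}$-invariant holomorphic functions are constants (the defining property of a toroidal group), the coefficients are forced to be constant, giving $h^{p,0}_{\delbar}=\binom{2}{p}$, i.e.\ $1, 2, 1$. For $q=1$, Proposition \ref{Kazama_Dol} identifies $H^{p,1}(X,\mathcal{O})$ with $\mathcal{F}^{p,1}(X)/\delbar\mathcal{F}^{p,0}(X)$, and Lemma \ref{dol_repre} produces a spanning set consisting of the constant-coefficient forms in $\bigwedge^p\{dz_1,dz_2\}\wedge\{d\bar{z}_1\}$, of which there are $\binom{2}{p}$; this yields upper bounds $1, 2, 1$ for $p=0, 1, 2$.

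The main point requiring care is the injectivity in the $q=1$ case, namely that no non-trivial constant-coefficient $(p,1)$-form on $X$ is $\delbar$-exact. A direct verification would require a Fourier-type argument on the universal cover, essentially reproducing the technical heart of Kazama's proof of Lemma \ref{dol_repre}. A more economical route uses dimension-counting via Umeno's Hodge decomposition $H^k_{DR}(X)=\bigoplus_{p+q=k}H^{p,q}(X,\mathcal{O})$ cited in the introduction: applying the K\"unneth formula to the real-analytic isomorphism $X\cong\C^\ast\times T_{\R}^2$ of \S\ref{2-dim} yields the Betti numbers $b_0=1$, $b_1=3$, $b_2=3$, $b_3=1$, $b_4=0$, which match the sums $\sum_{p+q=k}\binom{2}{p}$ of the upper bounds obtained above exactly; hence each bound must be an equality, and the Dolbeault diamond as stated follows.
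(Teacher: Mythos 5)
Your proposal is correct, and it diverges from the paper's own (very brief) derivation at exactly one point. The paper obtains the whole diamond by combining Proposition \ref{Kazama_Dol} with Lemma \ref{dol_repre}: in the sources cited there (\cite[Theorem $4.3$]{Kazama}, \cite[$2.2.5$ Proposition]{AK}) the constant-coefficient form in $\C\bigwedge^p\{dz_1,dz_2\}\wedge\bigwedge^q\{d\bar{z}_1\}$ is moreover \emph{uniquely} determined --- the analogue of the uniqueness clause in Fact \ref{dexact} --- so the constant forms are a basis of the cohomology and the dimensions $\binom{2}{p}$ for $q\in\{0,1\}$, together with $0$ for $q=2$ (where $\mathcal{F}^{p,2}=0$), can be read off at once; your treatment of the vanishings via Lemma \ref{Kazama_sq} and Lemma \ref{Fvani}, and of $q=0$ via invariance of the coefficients, coincides with the paper's. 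You correctly noticed that the paper's \emph{phrasing} of Lemma \ref{dol_repre} only yields a spanning set, and you supplied the missing linear independence by a global dimension count: exact values at $q=0$ and $q=2$, plus Umeno's Hodge decomposition and the Betti numbers $1,3,3,1,0$ obtained from $X\cong\C^\ast\times T^2_{\R}\simeq T^3_{\R}$ (matching Corollary \ref{betti}). That closing step is valid, and in fact it renders your $q=1$ upper bounds redundant, since $h^{0,1}_{\delbar}=b_1-h^{1,0}_{\delbar}$, $h^{1,1}_{\delbar}=b_2-h^{2,0}_{\delbar}-h^{0,2}_{\delbar}$, $h^{2,1}_{\delbar}=b_3-h^{1,2}_{\delbar}$ already pin down the three remaining numbers. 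The trade-off is worth recording: your route avoids reproducing Kazama's Fourier-analytic uniqueness argument, but it imports Umeno's Hodge decomposition, whose proof in the literature itself relies on Kazama's computation of the Dolbeault groups of theta toroidal groups; so as a free-standing proof of the corollary your argument is circular at the level of the primary sources, whereas within this paper's citation economy (the decomposition is quoted in \S\ref{intro} as established) it is a legitimate and arguably more transparent derivation than invoking the unstated uniqueness clause.
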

In fact, the Dolbeault cohomologies of a general toroidal group $C^n/ \Gamma$ are computed in \cite{Kazama}. 

\begin{remark}\label{del-cohom}
The conjugate cohomology of the Dolbeault cohomology
\begin{align*}
H^{p,q}_{\del}(X) \coloneqq \Ker \del \cap \mathcal{A}^{p,q}(X)/ \Im \del
\end{align*}
can be easily derived from Corollary $\ref{dol-cohom}$. 
The numbers $h_\del^{p,q}\coloneqq \dim H^{p,q}_\del (X)$ are:
\begin{align}
 \begin{array}{ccccc}
     &  & h^{0,0}_{\del}=1 &  &  \\
     & h^{1,0}_{\del}=1 &  & h^{0,1}_{\del}=2 &  \\
    h^{2,0}_{\del}=0 &  & h^{1,1}_{\del}=2 &  & h^{0,2}_{\del}=1 \\
     & h^{2,1}_{\del}=0 &  & h^{1,2}_{\del}=1 &  \\
     &  & \phantom{\;.} h^{2,2}_{\del}=0 \;. &  & 
 \end{array}
 \end{align}
 \end{remark}
 
Next, we analyze the de Rham cohomology of two-dimensional toroidal groups $X$. 
We apply Fact $\ref{dexact}$ to $X$. 
Consequently, the following assertion is straightforward. 
\begin{lem}[\cite{Ume}]\label{dexact_rmk}
Let $X$ be a two-dimensional toroidal group. 
For every $d$-closed $C^\infty$ $k$-form $\phi$ on $X$, there exist uniquely the $k$-form on $X$ with constant coefficients, and a $C^\infty$ $(k-1)$-form $\psi$ on $X$ such that
\begin{align*}
\phi=  \sum \C \wedge^k \{dz_1, dz_2, d\bar{z}_1 \} + d\psi
\end{align*}

\end{lem}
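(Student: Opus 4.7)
The strategy is to derive Lemma \ref{dexact_rmk} from Fact \ref{dexact} by exploiting the following decisive identity on the two-dimensional toroidal group $X = \C^2/\Lambda_{\tau,p,q}$: since every generator of $\Lambda_{\tau,p,q}$ has a purely real second coordinate ($1, p, q \in \R$), the function $\Im z_2 = \frac{1}{2i}(z_2 - \bar{z}_2)$ is lattice-invariant and hence descends to a well-defined $C^\infty$ function on $X$. Consequently, $d\bar{z}_2 - dz_2 = -d(2i\,\Im z_2)$ is $d$-exact on $X$. For existence, I apply Fact \ref{dexact} to obtain $\phi = \chi_0 + d\psi_0$ with $\chi_0$ a constant-coefficient $k$-form, expand $\chi_0$ in the monomial basis $\{dz_I \wedge d\bar{z}_J\}_{I,J\subset\{1,2\}}$, and for each monomial containing $d\bar{z}_2$ as a factor, substitute $d\bar{z}_2 = dz_2 - d(2i\,\Im z_2)$. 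Since the remaining wedge factor is $d$-closed, the correction is $d$-exact via $\alpha \wedge d(2i\,\Im z_2) = (-1)^{\deg\alpha} d(2i\,\Im z_2 \cdot \alpha)$. Absorbing the exact corrections into $d\psi$ yields $\phi = \chi + d\psi$ with $\chi \in W_k \coloneqq \bigoplus_{p+q=k} \bigwedge^p\{dz_1, dz_2\} \wedge \bigwedge^q\{d\bar{z}_1\}$.

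For uniqueness, suppose $\chi + d\psi = \chi' + d\psi'$ with $\chi, \chi' \in W_k$; then $\chi - \chi' \in W_k$ is $d$-exact, and it suffices to show this difference vanishes. The existence argument already shows that the natural map $W_k \to H^k_{dR}(X)$ is surjective. A direct count gives $\dim_\C W_k = \binom{3}{k}$, and Umeno's Hodge decomposition for theta toroidal groups \cite{Ume}, combined with the Dolbeault numbers from Corollary \ref{dol-cohom}, gives $b_k = \sum_{p+q=k} h^{p,q}_{\delbar} = \binom{3}{k}$. Since both spaces have the same dimension, the surjection is an isomorphism, forcing $\chi - \chi' = 0$.

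The subtle point lies in the uniqueness half, which depends on the dimension equality $\dim W_k = b_k$ and thus indirectly on Umeno's Hodge decomposition. An alternative, purely elementary route would lift a hypothetical exact element $\eta \in W_k$ to the universal cover $\C^2$, express it as $d\mu_0$ with $\mu_0$ a linear polynomial form, and exploit $\Lambda_{\tau,p,q}$-invariance of $\mu_0$ modulo constants to force the coefficients of $\eta$ to vanish; this is immediate for $k=1$ but grows combinatorially for higher $k$, so the dimension count via Hodge decomposition is preferable.
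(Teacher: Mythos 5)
Your proposal is correct, but it takes a genuinely different route from the paper, which in fact offers no proof at all: the paper simply declares the lemma ``straightforward'' from Fact \ref{dexact} and defers to \cite{Ume}, where Umeno's theorem already produces the constant representative in the correct subspace. Your existence step --- observing that $\Im z_2$ descends to $X$ because the second coordinates $1,p,q$ of the lattice generators are real, and substituting $d\bar{z}_2=dz_2-d(2\sqrt{-1}\,\Im z_2)$ --- is precisely the manipulation the paper performs later, inside the $(1,1)$-case of the proof of Theorem \ref{BCcohom}; you have promoted it from an ad hoc trick to the mechanism of the lemma. Your derivation also exposes a real imprecision in Fact \ref{dexact} as stated: on $X$ the constant form $\chi$ cannot be unique among \emph{all} constant-coefficient forms, exactly because $dz_2-d\bar{z}_2$ is $d$-exact; uniqueness only makes sense after fixing the complement $W_k=\bigwedge^k\{dz_1,dz_2,d\bar{z}_1\}$, which is what the lemma records and what your argument verifies.

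One scope caveat in your uniqueness step: you obtain $b_k=\binom{3}{k}$ from Umeno's Hodge decomposition together with Corollary \ref{dol-cohom}, both of which hold only for \emph{theta} toroidal groups, whereas the lemma (like Fact \ref{dexact}) is asserted for arbitrary $2$-dimensional toroidal groups, wild ones included. The repair is immediate: Corollary \ref{betti} gives $b_k=\binom{3}{k}$ for every $2$-dimensional toroidal group, so cite it directly and drop the Hodge-theoretic detour. Be aware, though, that Corollary \ref{betti} is itself \cite[Theorem $2.1$]{Ume}, whose content is essentially the lemma being proved (that $H^k_{DR}(X)$ is spanned by $\bigwedge^k\{dz_1,dz_2,d\bar{z}_1\}$); so your dimension count repackages the citation rather than replacing it --- still no worse than the paper's own treatment. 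If you want the uniqueness to be self-contained, the cleanest elementary version of your sketched alternative is homological rather than combinatorial: $X$ deformation retracts onto the maximal real torus $T^3_{\R}$ (the fibers of $t_4$), and the restrictions of $dz_1$, $dz_2$, $d\bar{z}_1$ to it, namely $dt_1+\tau\,dt_3$, $p\,dt_1+dt_2+q\,dt_3$, $dt_1+\bar{\tau}\,dt_3$, are linearly independent since $\tau\neq\bar{\tau}$; hence an exact element of $W_k$ restricts to an exact constant form on $T^3_{\R}$ and must vanish, with no growth in combinatorial complexity as $k$ increases.
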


\begin{cor}[{\cite[Theorem $2.1$]{Ume}}]\label{betti}
The betti numbers $b_i$ of two-dimensional toroidal groups are:
\begin{align*}
b_0=1, \quad b_1=3,\quad b_2=3,\quad b_3=1,\quad b_4=0.
\end{align*}
According to \cite[Theorem $2.1$]{Ume}, the de Rham cohomology groups of a general toroidal group $\C^n/\Gamma$  are specified.
\end{cor}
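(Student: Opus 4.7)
The plan is to read off the Betti numbers directly from the preceding Lemma \ref{dexact_rmk}. That lemma gives, for each $d$-closed smooth $k$-form $\phi$ on $X$, a unique decomposition $\phi = \chi + d\psi$, in which $\chi$ is a constant-coefficient form lying in the $\C$-span of $k$-fold wedge products of forms drawn from $\{dz_1, dz_2, d\bar{z}_1\}$. Since every such $\chi$ has constant coefficients it is automatically $d$-closed, so the assignment $[\phi]\mapsto \chi$ yields a canonical isomorphism between $H^k_{DR}(X)$ and this space of constant-coefficient $k$-forms.

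Granting this identification, the rest is a combinatorial count. The three $1$-forms $dz_1, dz_2, d\bar{z}_1$ are $\C$-linearly independent as constant-coefficient forms on $\C^2$, so the subspace of $k$-fold wedges on these three generators has dimension $\binom{3}{k}$. Reading off:
\[
b_0 = \binom{3}{0} = 1,\quad b_1 = 3,\quad b_2 = 3,\quad b_3 = 1,\quad b_4 = 0.
\]

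There is essentially no obstacle here, since Lemma \ref{dexact_rmk} packages all the analytic content; the only item requiring attention is the well-definedness of the map $[\phi]\mapsto \chi$, which follows from the uniqueness clause of that lemma, together with the observation that a nonzero constant-coefficient form in $\wedge^\bullet\{dz_1, dz_2, d\bar{z}_1\}$ cannot be $d$-exact (otherwise its unique decomposition would force $\chi = 0$). As a topological sanity check, the isomorphism $\phi : X \to \C^\ast \times T_{\R}^2$ exhibited in Section \ref{2-dim} shows that $X$ is homotopy equivalent to the real $3$-torus $S^1 \times T_{\R}^2$, whose Betti sequence is precisely $1, 3, 3, 1, 0$, confirming the result.
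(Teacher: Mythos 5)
Your proof is correct and follows the same route the paper intends: the corollary is stated as an immediate consequence of Lemma \ref{dexact_rmk}, and reading off $b_k=\binom{3}{k}$ from the unique constant-coefficient representative in $\bigwedge^k\{dz_1,dz_2,d\bar{z}_1\}$ is exactly the (omitted) derivation, with your well-definedness and injectivity checks via the uniqueness clause being the right justification. The homotopy-equivalence sanity check via $X\cong\C^\ast\times T_{\R}^2\simeq T_{\R}^3$ is a nice independent confirmation but not needed.
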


\end{subsection}
\end{section}

\begin{section}{Proof of Theorem $\ref{BCcohom}$}\label{pf_11}
First, we identify $H^{0,0}_{BC}(X)$ and $H^{2,2}_{BC}(X)$. 
Since $d$-closed $C^\infty$ functions on toroidal groups are constant, $h_{BC}^{0,0}=1$. 
By Lemma $\ref{dexact_rmk}$, $C^\infty$ $4$-forms are $d$-exact. 
Furthermore, from Fact $\ref{ddbar}$, we find that $d$-exact forms are $\del\delbar$-exact. 
Thus, we have $h^{2,2}_{BC}=0$.

Next, we observe $H^{1,0}_{BC}(X)$. 
Note that
\begin{align*}
H^{1,0}_{BC}(X)=\Ker d\cap \mathcal{A}^{1,0}(X) \subset \Ker\delbar \cap \mathcal{A}^{1,0}(X)=H^{1,0}_{\delbar}(X). 
\end{align*}
By Corollary $\ref{dol-cohom}$, we have that $h_{BC}^{1,0}\leq 2$.
On the other hand, since $dz_1$ and $dz_2$ are in $H^{1,0}_{BC}(X)$, it is clear that $h_{BC}^{1,0}\geq 2$. 
Thus, $h_{BC}^{1,0}= 2$. 
In a similar manner, we can prove that its conjugate $h^{0,1}_{BC}=2$.

Following this, we focus on $H^{2,0}_{BC}(X)$. 
Since 
\begin{align*}
H^{2,0}_{BC}(X)=\Ker d\cap \mathcal{A}^{2,0}(X) \subset \Ker\delbar \cap \mathcal{A}^{2,0}(X)=H^{2,0}_{\delbar}(X), 
\end{align*}
Corollary $\ref{dol-cohom}$ implies that $h_{BC}^{2,0}\leq 1$. 
In contrast, $h^{2,0}_{BC}\geq 1$, because $dz_1\wedge dz_2 \in H^{2,0}_{BC}(X)$. 
Hence, $h^{2,0}_{BC}=1$. 
Then $h^{0,2}_{BC}=1$ for its conjugate $H^{0,2}_{BC}(X)$. 

Subsequently, we specify $H^{2,1}_{BC}(X)$. 
Take a $d$-closed $C^\infty$ $(2,1)$-form $\phi$ on $X$. 
By Lemma $\ref{dexact_rmk}$, a $3$-form $\phi$ is of the form
\begin{align}\label{BC3form}
\phi =C dz_1\wedge dz_2 \wedge d\bar{z}_1+ d\psi, 
\end{align}
where $\psi$ is a $C^\infty$ $2$-form and $C\in\C$ is the uniquely determined constant. 
Then $d\psi$ is also a $(2,1)$-form. 
Furthermore, there exists a $(1,0)$-form $\tilde{\psi}$ such that $d\psi=\del\delbar\psi$ by Fact $\ref{ddbar}$. 
Thus, $h_{BC}^{2,1}\leq 1$. 
Now, $0\neq\left[ dz_1\wedge dz_2 \wedge d\bar{z}_1\right]\in H^{2,1}_{BC}(X)$. 
This is an immediate consequence of Fact $\ref{ddbar}$ and the uniqueness stated in Lemma $\ref{dexact_rmk}$. 
Indeed, it follows from the proof of Lemma $\ref{dexact_rmk}$ that $dz_1\wedge dz_2 \wedge d\bar{z}_1$ cannot be $d$-exact. 
Therefore $h_{BC}^{2,1}=1$. 
We can also calculate its conjugate $H^{1,2}_{BC}(X)$ as above.
In this case, $\left[ dz_1\wedge d\bar{z}_1 \wedge d\bar{z}_2\right]$ alone forms a basis for $H^{1,2}_{BC}(X)$. 
Thus, $h^{1,2}_{BC}=1$. 

Finally, we observe $H^{1,1}_{BC}(X)$. 
Take a $d$-closed $C^\infty$ $(1,1)$-form $\xi$ on $X$. 
By Lemma $\ref{dexact_rmk}$, there exist a $1$-form $\tilde{\xi}$ and the constants $C_i$ such that 
\begin{align*}
\xi = C_1 dz_1\wedge d\bar{z}_1 + C_2 dz_2\wedge d\bar{z}_1+ C_3 dz_1\wedge dz_2 + d\tilde{\xi}.
\end{align*}
Since $dz_2= d(2\i\Im z_2)+ d\bar{z}_2$, we have
\begin{align*}
\xi = C_1 dz_1\wedge d\bar{z}_1 + C_2 dz_2\wedge d\bar{z}_1+ C_3 dz_1\wedge d\bar{z}_2+ d(\tilde{\xi}-2C_3\i\Im z_2 dz_1). 
\end{align*}
Then $d(\tilde{\xi}-2C_3\i\Im z_2 dz_1)$ is $\del\delbar$-exact by Fact $\ref{ddbar}$. 
Hence, $h^{1,1}_{BC}\leq 3$. 
On the other hand, $\left[ dz_1\wedge d\bar{z}_1 \right], \left[ dz_2\wedge d\bar{z}_1 \right]$ and $\left[ dz_1\wedge d\bar{z}_2 \right] \in H^{1,1}_{BC}(X)$ are linearly independent. 
In fact, we can verify the independence by using Fact $\ref{ddbar}$ and the uniqueness of representation in Lemma $\ref{dexact_rmk}$.
Therefore, $h^{1,1}_{BC}=3$. 

\begin{remark}\label{BC1_1}
Indeed, we can obtain a vector space isomorphic to $H^{1,1}_{BC}(X)$ as shown below: 
First, we consider the exact sequence
\begin{align*}
0 \to \mathcal{H} \to \mathcal{A} \overset{\del\delbar}{\to} \mathcal{A}^{1,1} \overset{d}{\to} \mathcal{A}^3 \overset{d}{\to} \cdots.
\end{align*}
Since this sequence is a fine resolution of the sheaf $\mathcal{H}$ of germs of harmonic functions on $X$, we obtain the isomorphism
\begin{align*}
H^1(X,\mathcal{H})\cong \frac{\Ker (d: \mathcal{A}^{1,1}(X) \to \mathcal{A}^2(X))}{\Im (\del\delbar: \mathcal{A}(X) \to \mathcal{A}^{1,1}(X))} = H^{1,1}_{BC}(X).
\end{align*}
\end{remark}

\end{section}

\begin{section}{Aeppli cohomology of two-dimensional non-compact toroidal groups}\label{sec_Aeppli}
In this section, we consider theta toroidal groups $X =\C^2/\Lambda_{\tau,p,q}$. 
However, we remark that the isomorphisms between the cohomology and a vector space, which we prove in this section, hold for a general toroidal group.
We observe that the determination of Aeppli cohomology groups of type $(0,1)$ and of type $(1,1)$ can be reduced to a problem of convergence of power series. 

\begin{subsection}{The special sheaves on toroidal groups}\label{acyclic}

Let $\mathscr{U}\coloneqq\{U_j\}$ be a sufficiently fine finite Stein covering of the elliptic curve $\C/\langle 1,\tau \rangle$. 
Then $\mathscr{V}\coloneqq\{\pi^{-1}(U_j)\}$ is finite Stein covering of $X$. 
We prove that the covering $\mathscr{V}$ is acyclic for the sheaf $\mathcal{G}^{1,0}\oplus \mathcal{G}^{0,1}$. 
\begin{prop}\label{G_vani}
For any $p\geq 1$, 
\begin{align*}
H^p(\mathscr{V}, \mathcal{G}^{1,0}\oplus \mathcal{G}^{0,1})=0.
\end{align*}
\end{prop}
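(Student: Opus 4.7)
The plan is to adapt the partition-of-unity argument from the proof of Lemma $\ref{Fvani}$. Since \v{C}ech cohomology commutes with finite direct sums, one has
\[
H^p(\mathscr{V},\mathcal{G}^{1,0}\oplus\mathcal{G}^{0,1})\cong H^p(\mathscr{V},\mathcal{G}^{1,0})\oplus H^p(\mathscr{V},\mathcal{G}^{0,1}),
\]
so it suffices to establish the vanishing for each summand separately; the argument will be identical for both.

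The preparation step is to produce a partition of unity that is compatible with the defining differential operators. Let $\{\rho_j\}$ be a $C^\infty$ partition of unity on $\C/\langle 1,\tau\rangle$ subordinate to $\mathscr{U}$, and set $\rho_j^*:=\pi^*\rho_j$. Because $\pi$ is induced by the first projection $(z_1,z_2)\mapsto z_1$, each $\rho_j^*$ depends only on $z_1$ and $\bar{z}_1$; hence
\[
\del_{z_2}\rho_j^*=\delbar_{z_2}\rho_j^*=0.
\]
A direct Leibniz-rule computation then shows that multiplication by $\rho_j^*$ preserves the sheaves $\mathcal{F}=\Ker\delbar_{z_2}$, $\bar{\mathcal{F}}=\Ker\del_{z_2}$, and $\mathcal{G}=\Ker\del_{z_2}\delbar_{z_2}$, and therefore preserves sections of both $\mathcal{G}^{1,0}$ and $\mathcal{G}^{0,1}$.

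Once this preservation is in place, the standard \v{C}ech recipe applies verbatim. Given a cocycle $\{f_{j_0,\dots,j_p}\}\in \check{Z}^p(\mathscr{V},\mathcal{G}^{1,0})$, I would define
\[
g_{j_0,\dots,j_{p-1}}:=\sum_j \rho_j^*\, f_{j,j_0,\dots,j_{p-1}},
\]
which lies in $\mathcal{G}^{1,0}$ over the appropriate intersection by the preservation property, and verify via the usual telescoping identity that $\delta g=\{f_{j_0,\dots,j_p}\}$. The same calculation handles $\mathcal{G}^{0,1}$.

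The main (and essentially only) non-routine point I expect to check is that multiplication by $\rho_j^*$ genuinely preserves the second-order condition $\del_{z_2}\delbar_{z_2}f=0$ defining $\mathcal{G}$: this would fail for an arbitrary smooth cut-off function and works here precisely because $\rho_j^*$ is pulled back from the base, hence annihilated by both $\del_{z_2}$ and $\delbar_{z_2}$. This is the single feature that makes the proof of Lemma $\ref{Fvani}$ carry over to the present setting; everything else is routine adaptation.
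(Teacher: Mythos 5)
Your proposal is correct and follows essentially the same route as the paper: the paper's proof concludes with exactly your $\delta$-primitive $\{\sum_m (\pi^\ast\rho_m)\, f_{m,j_0,\dots,j_{p-1}}\}$, resting on the same key observation that $\pi^\ast\rho_m$ is pulled back from the base, hence annihilated by $\del_{z_2}$ and $\delbar_{z_2}$, so that multiplication by it preserves $\mathcal{F}$, $\bar{\mathcal{F}}$, $\mathcal{G}$ and therefore sections of $\mathcal{G}^{1,0}$ and $\mathcal{G}^{0,1}$. The only difference is that the paper additionally computes $H^p(U_j\times\C^\ast,\mathcal{G}^{1,0})=0$ via K\"{u}nneth's formula and the exact sequence $0\to\C\to\mathcal{O}\oplus\bar{\mathcal{O}}\to\mathcal{H}\to 0$; that local acyclicity is not needed for the stated vanishing of $H^p(\mathscr{V},\cdot)$, which your partition-of-unity argument establishes directly, but it is what later permits identifying \v{C}ech cohomology on $\mathscr{V}$ with sheaf cohomology.
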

\begin{proof}
It suffices to show that $H^p(\mathscr{V},\mathcal{G}^{1,0})=0$. 
By the local trivialization of $\mathscr{V}=\{\pi^{-1}(U_j) \}$, 
\begin{align*}
\pi^{-1}(U_j)\cong U_j\times \C^\ast.
\end{align*}
By an argument similar to Remark \ref{tensor}, 
\begin{align*}
\mathcal{G}^{1,0}(U_j\times \C^\ast) \cong ( \mathcal{A}^{1,0}(U_j)\otimes \mathcal{H}^{0,0}(\C^\ast))\oplus(\mathcal{A}^{0,0}(U_j)\otimes \Omega^1(\C^\ast)).
\end{align*}
Using K\"{u}nneth's formula \cite{Kaup}, we have
\begin{align*}
H^p(U_j\times \C^\ast, \mathcal{G}^{1,0}) = \bigoplus_{s+t=p} \left( H^s(U_j, \mathcal{A}^{1,0}) \otimes H^t(\C^\ast, \mathcal{H}^{0,0}) \right) \oplus \left( H^s(U_j, \mathcal{A}^{0,0}) \otimes H^t(\C^\ast, \Omega^1) \right)
\end{align*}
for any $p\geq1$. 
Then $H^s(U_j,\mathcal{A}^{k,0})=0$ and $H^s(\C^\ast, \Omega^1)=0$ for any $s\geq 1$. 
Hence, we are reduced to showing that $H^t(\C^\ast, \mathcal{H}^{0,0})=0$ for any $t\geq1$. 

From the short exact sequence $0\to \C\to \mathcal{O}\oplus\bar{\mathcal{O}}\to \mathcal{H}\to 0$ (see Remark $\ref{harm_ex}$), we obtain the long exact sequence
\begin{align*}
\cdots\to H^k(\C^\ast, \C)\to H^k(\C^\ast, \mathcal{O}\oplus\bar{\mathcal{O}})\to H^k(\C^\ast, \mathcal{H})\to H^{k+1}(\C^\ast, \C)\to \cdots.
\end{align*}
Note that $H^k(\C^\ast, \mathcal{O}\oplus\bar{\mathcal{O}})=0$ for any $k\geq1$, and $H^k(\C^\ast,\C)=0$ for any $k\geq2$. 
From the long exact sequence, we have that $H^k(\C^\ast, \mathcal{H})=0$ for any $k\geq1$. 
It follows that $H^p (U_j\times\C^\ast, \mathcal{G}^{1,0})=0$ for any $p\geq1$. 

Finally, we prove that $H^p(\mathscr{V}, \mathcal{G}^{1,0})=0$ for $p\geq1$. 
Let  $w\coloneqq \{(U_{j_0,\dots, j_p}, f_{j_0,\dots, j_p})\}\in \check{Z}^p(\mathscr{V}, \mathcal{G}^{1,0})$. 
Let $\{\rho_j\}$ be a partition of unity subordinate to the covering $\mathscr{U}=\{U_j\}$ of $\C/\langle 1,\tau \rangle$. 
Then $\{\pi^\ast \rho_j\}$ is a partition of unity subordinate to the covering $\mathscr{V}$ of $X$. 
Now, an element
\begin{align*}
\{(U_{j_0,\cdots, j_{p-1}}, \sum_{m}(\pi^\ast \rho_m) f_{m, j_0, \dots, j_{p-1}})\}\in \check{C}^{p-1}(\mathscr{V}, \mathcal{G}^{1,0})
\end{align*}
is a $\delta$-primitive solution for $w$. 
Therefore $H^p(\mathscr{V}, \mathcal{G}^{1,0})=0$ for $p\geq1$. 

\end{proof} 

The following statement follows immediately from the argument in the proof of Proposition $\ref{G_vani}$.
\begin{cor}\label{G_cor}
For any $p\geq1$, 
\begin{align*}
H^p(\mathscr{V}, \mathcal{G})=0.
\end{align*}
\end{cor}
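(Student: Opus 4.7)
The plan is to adapt the proof of Proposition \ref{G_vani} verbatim, replacing $\mathcal{G}^{1,0}$ by $\mathcal{G}$ throughout; the structure is identical, and most of the hard work has already been done.

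First, I would use the local trivialization $\pi^{-1}(U_j) \cong U_j \times \C^\ast$ to describe sections of $\mathcal{G}$ over each covering set. A section of $\mathcal{G}$ on $U_j \times \C^\ast$ is a $C^\infty$ function $f(z_1,z_2)$ with $\partial_{z_2}\bar\partial_{z_2} f = 0$, i.e. harmonic in the second variable for each fixed first variable. By the same nuclear-Fréchet/Sobolev tensor product argument sketched in Remark \ref{tensor}, this gives
\begin{align*}
\mathcal{G}(U_j\times\C^\ast) \cong \mathcal{A}(U_j) \otimes \mathcal{H}(\C^\ast),
\end{align*}
where $\mathcal{H}$ is the sheaf of real-analytic harmonic functions on $\C^\ast$.

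Next, I would apply Künneth's formula \cite{Kaup} to obtain, for $p\geq1$,
\begin{align*}
H^p(U_j\times\C^\ast,\mathcal{G}) \;=\; \bigoplus_{s+t=p} H^s(U_j,\mathcal{A}) \otimes H^t(\C^\ast,\mathcal{H}).
\end{align*}
Since $\mathcal{A}$ is a fine sheaf, $H^s(U_j,\mathcal{A})=0$ for every $s\geq 1$. The vanishing $H^t(\C^\ast,\mathcal{H})=0$ for $t\geq 1$ was already established inside the proof of Proposition \ref{G_vani} via the short exact sequence $0 \to \C \to \mathcal{O}\oplus\bar{\mathcal{O}} \to \mathcal{H}\to 0$ and the induced long exact sequence; it is reusable here verbatim. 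Consequently, $H^p(U_j\times\C^\ast,\mathcal{G})=0$ for $p\geq 1$, so the covering $\mathscr{V}$ is acyclic for $\mathcal{G}$.

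Finally, to descend from the covering-level vanishing to Čech cohomology, I would run the same partition of unity trick. Take a partition of unity $\{\rho_j\}$ subordinate to $\mathscr{U}$; each $\pi^{\ast}\rho_j$ depends only on $z_1$, hence satisfies $\partial_{z_2}(\pi^{\ast}\rho_j) = \bar\partial_{z_2}(\pi^{\ast}\rho_j) = 0$, so $\partial_{z_2}\bar\partial_{z_2}\bigl((\pi^{\ast}\rho_j)\,f\bigr) = (\pi^{\ast}\rho_j)\,\partial_{z_2}\bar\partial_{z_2}f = 0$ whenever $f\in\mathcal{G}$. Thus multiplication by $\pi^{\ast}\rho_j$ preserves $\mathcal{G}$, and for any cocycle $\{(U_{j_0,\dots,j_p},f_{j_0,\dots,j_p})\}\in\check{Z}^p(\mathscr{V},\mathcal{G})$ the formula
\begin{align*}
\Bigl\{\Bigl(U_{j_0,\dots,j_{p-1}},\,\sum_{m}(\pi^{\ast}\rho_m)\,f_{m,j_0,\dots,j_{p-1}}\Bigr)\Bigr\}
\end{align*}
provides a $\delta$-primitive in $\check{C}^{p-1}(\mathscr{V},\mathcal{G})$, finishing the proof. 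The only subtlety to justify carefully is the tensor product identification in step one, but this is precisely the content of Remark \ref{tensor} adapted from $\mathcal{F}$ to $\mathcal{G}$, so it is routine; everything else is immediate from the proof of Proposition \ref{G_vani}.
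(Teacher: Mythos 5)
Your proposal is correct and matches the paper exactly: the paper proves Corollary \ref{G_cor} by noting it ``follows immediately from the argument in the proof of Proposition \ref{G_vani}'', whose first summand $\mathcal{A}(U_j)\otimes\mathcal{H}^{0,0}(\C^\ast)$ in the decomposition of $\mathcal{G}^{1,0}(U_j\times\C^\ast)$ is precisely your identification $\mathcal{G}(U_j\times\C^\ast)\cong\mathcal{A}(U_j)\otimes\mathcal{H}(\C^\ast)$, followed by the same K\"{u}nneth, long-exact-sequence, and partition-of-unity steps. Your added observation that $\pi^\ast\rho_j$ depends only on $z_1$, so that multiplication by it preserves $\Ker\del_{z_2}\delbar_{z_2}$, is a welcome explicit justification of a point the paper leaves implicit.
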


\begin{remark}\label{harm_ex}
The sequence $0\to \C\to \mathcal{O}\oplus\bar{\mathcal{O}}\to \mathcal{H}\to 0$ is exact. 
Here, we consider the map $\C\to \mathcal{O}\oplus\bar{\mathcal{O}}; c\mapsto (c,-c)$, and $\mathcal{O}\oplus\bar{\mathcal{O}}\to \mathcal{H}; (f,g)\mapsto f+g$. 
In fact, the exactness follows from Lemma $\ref{Sch_ex}$. 

\end{remark}
\end{subsection}
\begin{subsection}{Aeppli cohomology of type $(0,0)$}\label{sec_Aeppli00}

In order to show an isomorphism between Aeppli cohomology of type $(0,0)$ and a certain vector space, we will use the following exact sequence of sheaves. 
\begin{lem}\label{fine00}
The sequence 
\begin{align*}
0\to \C\overset{\nu_0}{\to} \mathcal{O}\oplus\bar{\mathcal{O}}\overset{\nu_1}{\to} \mathcal{A}\overset{\del\delbar}{\to} \mathcal{A}^{1,1}\overset{d}{\to} \cdots
\end{align*}
is exact, where $\nu_0: c\mapsto (c,-c)$ and $\nu_1: (f,g)\mapsto f+g$.
\end{lem}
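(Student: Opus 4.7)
The plan is to check exactness of this sequence of sheaves stalk by stalk, which reduces to verifying the kernel-image equality at each spot on a sufficiently small open ball $U \subset \C^2$. With that reduction, every step becomes either a formal identity or a direct case of the local resolution lemma (Lemma \ref{Sch_ex}) established in \S\ref{local}.

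At the leftmost positions the verification is purely algebraic: $\nu_0(c) = (c,-c)$ vanishes only when $c=0$, and any $(f,g) \in \Ker \nu_1$ satisfies $f = -g$ on $U$, so $f$ is simultaneously holomorphic and antiholomorphic, hence locally constant; this yields $(f,g) = (c,-c) \in \Im \nu_0$. At the position $\mathcal{A}$, the composition $\del\delbar \circ \nu_1$ vanishes because $\del\delbar f = 0 = \del\delbar g$ for $f \in \mathcal{O}(U)$ and $g \in \bar{\mathcal{O}}(U)$; conversely, any $h \in \mathcal{A}(U)$ with $\del\delbar h = 0$ decomposes as $h = f + g$ with $f \in \mathcal{O}(U)$ and $g \in \bar{\mathcal{O}}(U)$ by the $p = q = 0$ case of Lemma \ref{Sch_ex}(iii).

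At $\mathcal{A}^{1,1}$, the inclusion $\Im \del\delbar \subset \Ker d$ is immediate from $d\del\delbar = 0$, and the reverse inclusion is exactly the assertion that a $d$-closed smooth $(1,1)$-form on $U$ is $\del\delbar$-exact, which is the $(p,q) = (1,1)$ case of Lemma \ref{Sch_ex}(ii). For the remaining spots implicit in the $\cdots$, namely $\mathcal{A}^{1,1} \overset{d}{\to} \mathcal{A}^3 \overset{d}{\to} \mathcal{A}^4 \to 0$, exactness of $d$ at $\mathcal{A}^3$ and $\mathcal{A}^4$ is the usual Poincar\'e lemma.

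Since every non-trivial step is already packaged into Lemma \ref{Sch_ex}, no serious technical obstacle remains. The content of the statement lies in reorganizing this local data to splice the short exact sequence $0 \to \C \to \mathcal{O} \oplus \bar{\mathcal{O}} \to \Ker(\del\delbar) \to 0$ onto a fine resolution of $\Ker(\del\delbar)$ in positive degrees, which is the structural input needed for the subsequent computation of $H^{0,0}_A(X)$.
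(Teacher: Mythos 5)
Your proof is correct and follows essentially the same route as the paper: the paper's own proof is a one-line citation of Lemma \ref{Sch_ex} together with Remark \ref{harm_ex} (the decomposition of pluriharmonic germs as sums of holomorphic and antiholomorphic germs), and your write-up simply makes those stalkwise verifications explicit. One small repair is needed at the position $\mathcal{A}^3$: the usual Poincar\'e lemma only produces \emph{some} $2$-form primitive of a $d$-closed $3$-form, with no control on its bidegree components, whereas exactness of the sequence there requires a primitive of pure type $(1,1)$. This is exactly what Lemma \ref{Sch_ex}(i) supplies: on a ball in $\C^2$ a $d$-closed $3$-form has nonzero components only in bidegrees $(2,1)$ and $(1,2)$, so applying (i) with $p_1=1$ and $p_2=2$ yields a primitive whose components have bidegree $(p,q)$ with $1\leq p\leq 1$, i.e.\ a $(1,1)$-form. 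Exactness at $\mathcal{A}^4$ is fine as you state it, since every $4$-form is automatically $d$-closed. With that citation corrected, your argument coincides with the paper's intended proof.
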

\begin{proof}
This follows immediately from the assertions in Lemma $\ref{Sch_ex}$ and Remark $\ref{harm_ex}$.
\end{proof}
From the sequence in Lemma $\ref{fine00}$, we obtain $H^{0,0}_A (X)\cong H^1(\mathscr{V}, \Ker \nu_1)$ for a Stein covering $\mathscr{V}$. 
Next, we consider the following exact sequence which decomposes $\Ker\nu_1$. 
\begin{lem}\label{ex00}
The sequence
\begin{align*}
0\to \Ker\nu_1 \overset{i}{\hookrightarrow} \mathcal{O}\oplus\bar{\mathcal{O}}\overset{\tilde{\nu_1}}{\to} \mathcal{G}\overset{\del\delbar}{\to} \Im\del\delbar \to 0
\end{align*}
is exact, where $\tilde{\nu_1}: (f,g)\mapsto f+g$
\end{lem}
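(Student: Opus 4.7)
The plan is to verify exactness at each of the four positions in the sequence. Three of them follow immediately from the definitions of the maps, and the nontrivial content is concentrated at $\mathcal{G}$, where I will invoke Lemma~\ref{Sch_ex}\,$(iii)(d)$. Throughout, I interpret $\Im\del\delbar$ as the image sheaf $\del\delbar(\mathcal{G})\subset\mathcal{A}^{1,1}$, so surjectivity at the rightmost position is built into the definition.

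Before checking exactness, I would verify that $\tilde{\nu_1}$ lands in $\mathcal{G}$. Given $(f,g)\in\mathcal{O}\oplus\bar{\mathcal{O}}$, the holomorphicity of $f$ gives $\bar{\del}_{z_2}f=0$, the anti-holomorphicity of $g$ gives $\del_{z_2}g=0$, and the commutativity of $\del_{z_2}$ and $\bar{\del}_{z_2}$ yields $\del_{z_2}\bar{\del}_{z_2}(f+g)=0$; hence $f+g\in\mathcal{G}$.

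Exactness at $\Ker\nu_1$ is clear since $i$ is an inclusion, and exactness at $\mathcal{O}\oplus\bar{\mathcal{O}}$ is tautological because the kernel of $\tilde{\nu_1}$ consists of pairs $(f,g)$ with $f+g=0$, which is precisely $\Ker\nu_1$. The main step is exactness at $\mathcal{G}$, namely $\Im\tilde{\nu_1}=\Ker(\del\delbar : \mathcal{G}\to\Im\del\delbar)$. The inclusion $\Im\tilde{\nu_1}\subset\Ker\del\delbar$ is immediate since $\del\delbar f=\del(\bar{\del}f)=0$ for $f\in\mathcal{O}$ and $\del\delbar g=-\bar{\del}(\del g)=0$ for $g\in\bar{\mathcal{O}}$. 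For the reverse inclusion, given $v\in\mathcal{G}$ with $\del\delbar v=0$ on a sufficiently small neighborhood, Lemma~\ref{Sch_ex}\,$(iii)(d)$ provides a decomposition $v=h+\bar{k}$ with $h\in\mathcal{O}$ and $\bar{k}\in\bar{\mathcal{O}}$, which exhibits $v$ as $\tilde{\nu_1}(h,\bar{k})$; passing to stalks yields the required inclusion.

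No serious obstacle arises. The only point requiring care is the interpretation of $\Im\del\delbar$ as the image sheaf of $\del\delbar|_{\mathcal{G}}$, which is a proper subsheaf of $\del\delbar(\mathcal{A})$ (for instance, elements of the former have vanishing $dz_2\wedge d\bar{z}_2$ component, while general elements of the latter do not). This distinction does not affect the present exactness argument but will matter in the subsequent cohomological computations based on this sequence.
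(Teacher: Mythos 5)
Your proof is correct and takes essentially the same route as the paper: the only nontrivial point is exactness at $\mathcal{G}$, which the paper likewise settles by decomposing a $\del\delbar$-closed germ as a sum of a holomorphic and an anti-holomorphic function (Lemma~\ref{Sch_ex}\,(iii)(d), cf.\ Remark~\ref{harm_ex}). Your additional checks---that $\tilde{\nu_1}$ indeed lands in $\mathcal{G}$, and that $\Im\del\delbar$ is to be read as the image sheaf of $\del\delbar|_{\mathcal{G}}$---are consistent with how the paper uses the resulting short exact sequence in Proposition~\ref{cong00}.
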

\begin{proof}
Let $U\subset X$ be a sufficiently small open ball. 
Clearly, $\Ker\nu_1=\Ker\tilde{\nu_1}$. 
By the definition of $\tilde{\nu_1}$, $\Im\tilde{\nu_1}\subset\mathcal{G}(U)$. 
Let $w\in\Ker\del\delbar\cap\mathcal{G}(U)$. 
Since $w\in\mathcal{H}(U)$, there exist $f\in\mathcal{O}(U)$ and $g\in\bar{\mathcal{O}}(U)$ such that $w=f+g$. 
Hence, $\Im\tilde{\nu_1}=\mathcal{G}(U)$.
\end{proof}

\begin{prop}\label{cong00}
\begin{align*}
H^{0,0}_A(X)\cong \frac{\Ker\del\delbar\cap\mathcal{G}(X)}{\mathcal{O}(X)+\bar{\mathcal{O}}(X)}.
\end{align*}
\end{prop}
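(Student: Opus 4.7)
The plan is to follow the two-step method suggested by the placement of Lemmas~\ref{fine00} and \ref{ex00}, which compute both sides of the desired isomorphism as the same cohomological kernel. For the first step, I would split the exact sequence of Lemma~\ref{fine00} at the middle term $\mathcal{A}$, extracting the short exact sequence of sheaves
\begin{equation*}
0 \to \C \to \mathcal{O}\oplus\bar{\mathcal{O}} \to \Ker\del\delbar \to 0,
\end{equation*}
where the rightmost sheaf is $\Im\nu_1 = \Ker(\del\delbar:\mathcal{A}\to\mathcal{A}^{1,1})$ by exactness at $\mathcal{A}$. Passing to the associated long exact cohomology sequence for the Stein covering $\mathscr{V}$, and observing that the image of $\Gamma(X,\mathcal{O}\oplus\bar{\mathcal{O}})\to \Gamma(X,\Ker\del\delbar)=\Ker\del\delbar\cap\mathcal{A}(X)$ is precisely $\mathcal{O}(X)+\bar{\mathcal{O}}(X)$, I would obtain
\begin{equation*}
H^{0,0}_A(X) \cong \Ker\bigl(H^1(\mathscr{V},\C)\to H^1(\mathscr{V},\mathcal{O}\oplus\bar{\mathcal{O}})\bigr).
\end{equation*}

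Next I would apply the analogous procedure to Lemma~\ref{ex00}: splitting it at $\mathcal{G}$ yields the short exact sequence
\begin{equation*}
0 \to \C \to \mathcal{O}\oplus\bar{\mathcal{O}} \to \mathcal{K}_{\mathcal{G}} \to 0,
\end{equation*}
where $\mathcal{K}_{\mathcal{G}} \coloneqq \Ker(\del\delbar:\mathcal{G}\to\Im\del\delbar)$. The corresponding long exact cohomology sequence, together with the identification $\Gamma(X,\mathcal{K}_{\mathcal{G}})=\Ker\del\delbar\cap\mathcal{G}(X)$, produces
\begin{equation*}
\frac{\Ker\del\delbar\cap\mathcal{G}(X)}{\mathcal{O}(X)+\bar{\mathcal{O}}(X)} \cong \Ker\bigl(H^1(\mathscr{V},\C)\to H^1(\mathscr{V},\mathcal{O}\oplus\bar{\mathcal{O}})\bigr).
\end{equation*}
Comparing the two displays finishes the proof.

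The only nontrivial point, and the reason the two short exact sequences give the same kernel, is the compatibility $\mathcal{K}_{\mathcal{G}} = \Ker(\del\delbar:\mathcal{A}\to\mathcal{A}^{1,1})$ as subsheaves of $\mathcal{A}$; equivalently, $\Ker\del\delbar\subset\mathcal{G}$. This is immediate because the $dz_2\wedge d\bar{z}_2$-component of $\del\delbar f$ is exactly $\del_{z_2}\delbar_{z_2}f\, dz_2\wedge d\bar{z}_2$, so $\del\delbar f = 0$ forces $\del_{z_2}\delbar_{z_2}f=0$. I do not foresee any real obstacle; once this compatibility is noted, the two short exact sequences above are literally identical, and the statement is forced. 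The same scheme, with $\mathcal{G}$ replaced by the other sheaves introduced in \S\ref{sheaves}, is what will be needed for the harder $(1,0)$ and $(1,1)$ Aeppli computations, where the analogous compatibility is no longer automatic.
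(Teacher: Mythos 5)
Your proposal is correct, and structurally it is the paper's own proof: both split the exact sequences of Lemma~\ref{fine00} and Lemma~\ref{ex00} into the same short exact sequence $0\to\C\to\mathcal{O}\oplus\bar{\mathcal{O}}\to\Ker\del\delbar\to0$ and compare the induced cohomology sequences over the Stein covering $\mathscr{V}$. The genuine difference lies at the decisive step. The paper asserts $H^1(\mathscr{V},\mathcal{O}\oplus\bar{\mathcal{O}})=0$ and on that basis identifies $H^{0,0}_A(X)$ with the \emph{full} group $H^1(\mathscr{V},\Ker\nu_1)$; but since $\mathscr{V}$ is a Stein covering and $\mathcal{O}$ is coherent, Leray gives $H^1(\mathscr{V},\mathcal{O})\cong H^{0,1}(X,\mathcal{O})$, which is one-dimensional for a theta toroidal group by Corollary~\ref{dol-cohom}, so the asserted vanishing fails; the paper's two intermediate isomorphisms are off by the same correction and cancel in the composite, leaving the proposition intact. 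Your version, which matches both quotients only against the common kernel $\Ker\bigl(H^1(\mathscr{V},\C)\to H^1(\mathscr{V},\mathcal{O}\oplus\bar{\mathcal{O}})\bigr)$, sidesteps this error entirely. Better still, your closing compatibility remark already finishes the proof with no cohomology at all: since the $dz_2\wedge d\bar{z}_2$-component of $\del\delbar f$ is $\del_{z_2}\delbar_{z_2}f$, one has $\Ker(\del\delbar:\mathcal{A}\to\mathcal{A}^{1,1})\subset\mathcal{G}$, hence $\Ker\del\delbar\cap\mathcal{A}(X)=\Ker\del\delbar\cap\mathcal{G}(X)$, and the two denominators are literally both $\mathcal{O}(X)+\bar{\mathcal{O}}(X)$; so the two sides of Proposition~\ref{cong00} are equal as quotient spaces, not merely isomorphic. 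One caution if you keep the cohomological dressing: the long exact sequence in \v{C}ech cohomology on the \emph{fixed} covering $\mathscr{V}$ requires $\mathcal{O}\oplus\bar{\mathcal{O}}\to\Ker\del\delbar$ to be surjective on sections over all intersections, and it is not (e.g.\ $\log|z_2|$ on $U_{jk}\times\C^\ast$ is pluriharmonic but not of the form $f+\bar{g}$), so one should pass to sheaf cohomology $H^{\bullet}(X,-)$ or refine the covering. For the $(0,0)$ case this is harmless, since your tautological argument bypasses it; but it is exactly where the paper's machinery (acyclicity of $\mathscr{V}$ for $\mathcal{G}^{p,q}$, Corollary~\ref{G_cor}) earns its keep in the $(0,1)$ and $(1,1)$ cases, where, as you anticipate, the compatibility $\Ker\del\delbar\subset\mathcal{G}^{p,q}$ is no longer automatic.
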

\begin{proof}
Note that $H^{0,0}_A (X)\cong H^1(\mathscr{V}, \Ker \nu_1)$ for a Stein covering $\mathscr{V}$. 
From Lemma $\ref{ex00}$, we obtain the following short exact sequence
\begin{align*}
0&\to \Ker\nu_1 \to \mathcal{O}\oplus\bar{\mathcal{O}} \to \Ker\del\delbar \to 0. 
\end{align*}
Using the long exact sequence induced by this short exact sequence, we obtain the following exact sequence of quotient spaces: 
\begin{align*}
0\to \Im(\nu_1: H^0(\mathscr{V}, \mathcal{O}\oplus\bar{\mathcal{O}}) \to H^1(\mathscr{V}, &\Im\nu_1))\to H^0(\mathscr{V}, \Ker\del\delbar)\\ 
&\to H^1(\mathscr{V}, \Ker\nu_1) \to H^1(\mathscr{V}, \mathcal{O}\oplus\bar{\mathcal{O}})=0.
\end{align*}
Thus, we obtain $H^1(\mathscr{V}, \Ker\nu_1)\cong \Ker\del\delbar\cap\mathcal{G}(X)/\mathcal{O}(X)+\bar{\mathcal{O}}(X)$.
\end{proof}

Subsequently, using Proposition $\ref{cong00}$, we identify Aeppli cohomology of type $(0,0)$. 
Take $w\in\Ker\del\delbar\cap \mathcal{G}(X)$. 
From the general theory of toroidal groups, we obtain the following Fourier series expansion for $w$: 
\begin{align}\label{Fourier}
w(z_1,z_2)= \sum_{\sigma=(\sigma_1, \sigma_2, \sigma_3)\in\Z^3} a^\sigma (\Im z_2) \exp\langle \sigma, t'\rangle, 
\end{align}
where the coefficients $a^\sigma$ are certain $C^\infty$ functions of $t_4=\Im z_2$, $t'=(t_1, t_2, t_3)$, and $\exp{\langle \sigma, t' \rangle}\coloneqq \exp{(2\pi\sqrt{-1}(\sigma_1 t_1+\sigma_2 t_2+ \sigma_3 t_3))}$. 
Here, each $t_i$ is defined as in \S $\ref{2-dim}$. 
 
We set $\textstyle A^\sigma\coloneqq \frac{\pi\sqrt{-1}}{\Im\tau} (\sigma_1(\Im\tau-\sqrt{-1}\Re\tau) +\sigma_2( -p(\Im\tau-\sqrt{-1}\Re\tau)-q\sqrt{-1}) +\sqrt{-1} \sigma_3)$ and $B^\sigma\coloneqq \i\pi\sigma_2$. 
We denote by $\bar{A^\sigma}$ the conjugate of $A^\sigma$. 
Then, since $\del\delbar w=0$, we find that
\begin{align*}
0=\del_{z_1}\delbar_{z_1}w= \sum_{\sigma\in\Z^3} \left(-|A^\sigma|^2\right)a^\sigma \exp\langle \sigma, t'\rangle,
\end{align*}
Note that $\sigma=0$ is equivalent to $A^\sigma=0$. 
It follows that $a^\sigma=0$ for any $\sigma\neq0$, that is, $w=a^0(t_4)$.
Furthermore, since $w\in \mathcal{G}(X)$, we have $\del_{z_2}\delbar_{z_2}w=0$. 
By simple calculations, $d^2 a^0(t_4)/d (t_4)^2=0$. 
From the above discussion, we have shown that $w=C_2 t_4+C_1$ for certain constants $C_1$ and $C_2$. 
Therefore, we conclude that $h^{0,0}_A=1$ from Proposition $\ref{cong00}$.

\end{subsection}


\begin{subsection}{Aeppli cohomology of type $(0,1)$}\label{sec_Aeppli01}
In this subsection, we also observe a vector space isomorphic to Aeppli cohomology of type $(0,1)$ using a similar method as in \S $\ref{sec_Aeppli00}$. 
\begin{lem}\label{fine01}
The sequence
\begin{align}
0\to \C \overset{\iota_0}{\to} \mathcal{O}\oplus\bar{\mathcal{O}} \overset{\iota_1}{\to} \mathcal{A}\oplus\bar{\Omega}^1 \overset{\iota_2}{\to} \mathcal{A}^{0,1} \overset{\del\delbar}{\to} \mathcal{A}^{1,2}\to 0 
\end{align}
is exact, where $\iota_0: c\mapsto (c,-c), \iota_1: (f,g)\mapsto (f+g,-\delbar g)$, and $\iota_2: (s,t)\mapsto \delbar s+t$.
\end{lem}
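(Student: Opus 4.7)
Exactness is a local claim, so it suffices to check it on a sufficiently small ball $U\subset\C^2$, where Lemma~\ref{Sch_ex} is available. The strategy mirrors the $(0,0)$-case treated in Lemma~\ref{ex00}: at each position verify by a direct computation that the relevant composition vanishes, and then use the appropriate part of Lemma~\ref{Sch_ex} to produce a preimage for every kernel element.

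The three leftmost exactness points are dispatched quickly. Injectivity of $\iota_0$ is immediate. For $(f,g)\in\Ker\iota_1$ one has $f+g=0$ and $\delbar g=0$; combined with $\del g=0$ (since $g\in\bar{\mathcal{O}}$), this forces $g$ locally constant, and writing $g=-c$ identifies $(f,g)=\iota_0(c)$. For $(s,t)\in\Ker\iota_2$, the equation $\delbar s=-t$ together with $\del t=0$ (because $t\in\bar{\Omega}^1$) gives $\del\delbar s=0$, and Lemma~\ref{Sch_ex}(iii)(d) decomposes $s=f+g$ with $f\in\mathcal{O}(U)$, $g\in\bar{\mathcal{O}}(U)$; then $t=-\delbar g$, so $(s,t)=\iota_1(f,g)$.

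Exactness at $\mathcal{A}^{0,1}$ is the conceptual core. That $\del\delbar\circ\iota_2=0$ follows from $\delbar^2=0$ and the identity $\del\delbar t=-\delbar\del t=0$ for $t\in\bar{\Omega}^1$. Conversely, if $u\in\mathcal{A}^{0,1}(U)$ satisfies $\del\delbar u=0$, then Lemma~\ref{Sch_ex}(iii)(c), which is precisely the $(0,1)$ case, produces a decomposition $u=\delbar s+t$ with $s\in\mathcal{A}(U)$ and $t\in\bar{\Omega}^1(U)$; this exhibits $u=\iota_2(s,t)$.

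The main obstacle is the final surjectivity $\del\delbar\colon\mathcal{A}^{0,1}(U)\to\mathcal{A}^{1,2}(U)$. My plan is to start with $\omega\in\mathcal{A}^{1,2}(U)$; since $\del\delbar\omega\in\mathcal{A}^{2,3}=0$ vanishes automatically in complex dimension two, Lemma~\ref{Sch_ex}(iii)(a) with $(p,q)=(1,2)$ furnishes a splitting $\omega=\delbar\alpha+\del\beta$ with $\alpha\in\mathcal{A}^{1,1}(U)$ and $\beta\in\mathcal{A}^{0,2}(U)$. Applying Dolbeault's lemma to the automatically $\delbar$-closed form $\beta$ gives $\beta=\delbar\gamma$ for some $\gamma\in\mathcal{A}^{0,1}(U)$, converting $\del\beta$ into $\del\delbar\gamma$. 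To absorb the residual $\delbar\alpha$ into a $\del\delbar$-image from $\mathcal{A}^{0,1}$, one considers $\del\omega\in\mathcal{A}^{2,2}(U)$---automatically $d$-closed by top-bidegree reasons---and applies Lemma~\ref{Sch_ex}(ii)(a) to write $\del\omega=\del\delbar\tau$ for some $\tau\in\mathcal{A}^{1,1}(U)$; the form $\omega-\delbar\tau$ is then $\del$-closed, and the conjugate Dolbeault lemma followed by one more application of Dolbeault should deliver the desired $\phi\in\mathcal{A}^{0,1}(U)$ with $\del\delbar\phi=\omega$. I expect the delicate coordination between these successive primitives---making sure the $\delbar\alpha$ and $\delbar\tau$ leftovers can be reshuffled consistently into a single $\del\delbar$-exact representative---to be where the real technical content of the lemma sits.
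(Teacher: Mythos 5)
Your handling of the four left-hand exactness spots is correct and essentially identical to the paper's own proof: injectivity of $\iota_0$; for $\Ker\iota_1$, the observation that $g$ is simultaneously holomorphic and anti-holomorphic, hence constant; the decomposition $s=h_1+h_2$ from Lemma \ref{Sch_ex} to show $\Ker\iota_2\subset\Im\iota_1$; and Lemma \ref{Sch_ex}(iii)(c) for $\Ker\del\delbar\cap\mathcal{A}^{0,1}=\Im\iota_2$. Up to that point there is nothing to add.

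The step you flagged as ``the real technical content,'' however, is not merely delicate: it is impossible, and your plan necessarily stalls at the leftover $\delbar\tau$. Since $\del^2=0$, every form in $\Im\left(\del\delbar\colon\mathcal{A}^{0,1}\to\mathcal{A}^{1,2}\right)$ is $\del$-closed, while a general $(1,2)$-form on a ball $U\subset\C^2$ is not: for $\omega=z_1\,dz_2\wedge d\bar{z}_1\wedge d\bar{z}_2$ one has $\del\omega=dz_1\wedge dz_2\wedge d\bar{z}_1\wedge d\bar{z}_2\neq0$, so $\omega$ admits no $\del\delbar$-preimage on any neighbourhood. In fact $\Im(\del\delbar)=\Ker\del\cap\mathcal{A}^{1,2}$ exactly: if $\del\omega=0$ then $\omega$ is $d$-closed (as $\delbar\omega\in\mathcal{A}^{1,3}=0$ in complex dimension $2$), and Lemma \ref{Sch_ex}(ii)(a) gives $\omega\in\del\delbar\mathcal{A}^{0,1}(U)$; this is the one legitimate use of (ii)(a) here, and it shows the obstruction to absorbing your $\delbar\tau$ is precisely $\del\omega$. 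Consequently the sequence of Lemma \ref{fine01} is not exact at $\mathcal{A}^{1,2}$ as printed. You should know that the paper's own proof fares no better: it asserts the surjectivity ``by Lemma \ref{Sch_ex},'' which that lemma does not contain, so you have uncovered an error in the statement rather than missed an idea. The repair is to end the sequence as $\mathcal{A}^{0,1}\overset{\del\delbar}{\to}\mathcal{A}^{1,2}\overset{\del}{\to}\mathcal{A}^{2,2}\to0$, which is exact by the argument above together with the conjugate Dolbeault lemma. The defect is harmless downstream: the identification $H^{0,1}_A(X)\cong H^1(\mathscr{V},\Ker\iota_2)$ uses only exactness through $\mathcal{A}^{0,1}$, i.e.\ the short exact sequence $0\to\Ker\iota_2\to\mathcal{A}\oplus\bar{\Omega}^1\to\Ker\del\delbar\cap\mathcal{A}^{0,1}\to0$, which your argument (and the paper's) does establish.
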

\begin{proof}
Let $U\subset X$ be a sufficiently small open ball. 
Clearly, $\iota_0$ is injective.
By Lemma $\ref{Sch_ex}$, we see that $\Im\iota_2=\Ker\del\delbar$ and $\del\delbar: \mathcal{A}^{0,1}(U)\to \mathcal{A}^{1,2}$ is surjective. 

It is clear that $\Im\iota_0\subset\Ker\iota_1$. 
Let $(f,g)\in\Ker\iota_1$. 
Then $g$ is holomorphic and anti-holomorphic on $U$. 
Hence, $g=-f$ is constant. 
We find that $\iota_0 (f)=(f,g)$, that is, $\Im\iota_0=\Ker\iota_1$. 

Finally, we prove $\Im\iota_1=\Ker\iota_2$. 
A straightforward calculation shows that $\Im\iota_1\subset\Ker\iota_2$. 
Take $(s,t)\in\Ker\iota_2$. 
Since $\delbar s+t=0$ and $t$ is $\del$-closed, we obtain $\del\delbar s=0$. 
By Lemma $\ref{Sch_ex}$, there exist $h_1\in \mathcal{O}(U)$ and $h_2\in\bar{\mathcal{O}}(U)$ such that $s=h_1+h_2$. 
Then, 
\begin{align*}
\iota_1(h_1,h_2)=(h_1+h_2,-\delbar h_2)= (s, -\delbar s)=(s,t).
\end{align*}
\end{proof}

Note that $\mathcal{O}$ and $\bar{\Omega}^q$ are coherent, and $\mathscr{V}$ is a Stein covering. 
It follows from Lemma $\ref{fine01}$ that 
\begin{align*}
H^{0,1}_A(X)&= \frac{\Ker\del\delbar\cap\mathcal{A}^{0,1}(X)}{\Im(\iota_2: \mathcal{A}(X)\oplus\bar{\Omega}^1(X)\to \mathcal{A}^{0,1}(X))}\\
&\cong H^1(\mathscr{V}, \Ker\iota_2)\\
&\cong H^2(\mathscr{V},\Ker\iota_1).
\end{align*}

Next, we consider the following exact sequence which decomposes $\Ker\iota_1$. 
\begin{lem}\label{ex01}
The sheaf sequence
\begin{align*}
0\to \Ker\iota_1 \hookrightarrow \mathcal{O}\oplus\bar{\mathcal{O}}\overset{\tilde{\iota}_1}{\to} \mathcal{G}\oplus\bar{\Omega}^1\overset{\tilde{\iota}_2}{\to} \mathcal{G}^{0,1}\overset{\del\delbar}{\to} \Im \del\delbar \to 0
\end{align*}
is exact, where $\tilde{\iota}_1: (f,g)\mapsto (f+g,-\delbar g)$ and $\tilde{\iota}_2: (s,t)\mapsto (\delbar s+t)$.
\end{lem}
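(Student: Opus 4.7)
The plan is to work locally on a sufficiently small open ball $U\subset X$ and verify exactness at each spot of the sequence in turn, mirroring the structure of the proof of Lemma \ref{fine01} but invoking Lemma \ref{Sch_ex}(iii) in place of (ii) so that the primitives I construct automatically land in the smaller sheaves $\mathcal{G}$, $\bar{\Omega}^1$, and $\mathcal{G}^{0,1}$. The first step will be to check well-definedness of the two middle maps: for $(f,g)\in\mathcal{O}\oplus\bar{\mathcal{O}}$, the sum $f+g$ is pluriharmonic, so in particular $\del_{z_2}\delbar_{z_2}(f+g)=0$ and $f+g\in\mathcal{G}$, while $-\delbar g\in\bar{\Omega}^1$ because $g$ is antiholomorphic; for $(s,t)\in\mathcal{G}\oplus\bar{\Omega}^1$, expanding $\delbar s+t=(s_{\bar{z}_1}+t_1)d\bar{z}_1+(s_{\bar{z}_2}+t_2)d\bar{z}_2$ and commuting partial derivatives shows that the $d\bar{z}_1$-coefficient is annihilated by $\del_{z_2}\delbar_{z_2}$ and the $d\bar{z}_2$-coefficient by $\del_{z_2}$, so $\delbar s+t\in\mathcal{G}^{0,1}$.

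Next I would dispatch the formal exactness statements. At $\mathcal{O}\oplus\bar{\mathcal{O}}$ there is nothing to do, since $\tilde{\iota}_1$ and $\iota_1$ share the same defining formula and differ only in their codomains, so $\Ker\tilde{\iota}_1=\Ker\iota_1$ automatically. At $\mathcal{G}\oplus\bar{\Omega}^1$, the inclusion $\Im\tilde{\iota}_1\subset\Ker\tilde{\iota}_2$ is the direct calculation $\delbar(f+g)+(-\delbar g)=\delbar f=0$. For the reverse, given $(s,t)\in\mathcal{G}(U)\oplus\bar{\Omega}^1(U)$ with $\delbar s+t=0$, I note that $\del\delbar s=-\del t=0$, apply Lemma \ref{Sch_ex}(iii)(d) to write $s=h_1+h_2$ with $h_1\in\mathcal{O}(U)$, $h_2\in\bar{\mathcal{O}}(U)$, and then $\tilde{\iota}_1(h_1,h_2)=(s,-\delbar h_2)=(s,-\delbar s)=(s,t)$. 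Surjectivity of the final $\del\delbar$ onto $\Im\del\delbar$ is tautological.

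The crux of the argument is exactness at $\mathcal{G}^{0,1}$. The easy inclusion $\Im\tilde{\iota}_2\subset\Ker(\del\delbar)$ follows from $\delbar^2=0$ together with $\del t=0$. For the reverse, given $w=f_1d\bar{z}_1+f_2d\bar{z}_2\in\mathcal{G}^{0,1}(U)$ with $\del\delbar w=0$, I would apply Lemma \ref{Sch_ex}(iii)(c) to produce $h\in\mathcal{A}(U)$ and $t\in\bar{\Omega}^1(U)$ with $w=\delbar h+t$. The main obstacle is then to verify that this $h$ already lies in $\mathcal{G}(U)$, not merely in $\mathcal{A}(U)$, so that $(h,t)$ is a legitimate element of $\mathcal{G}(U)\oplus\bar{\Omega}^1(U)$. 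I plan to do this by equating $d\bar{z}_2$-coefficients: this gives $f_2=h_{\bar{z}_2}+t_2$, and since $f_2\in\bar{\mathcal{F}}(U)$ and $\del_{z_2}t_2=0$, I obtain $\del_{z_2}h_{\bar{z}_2}=\del_{z_2}f_2=0$, which is exactly the condition $h\in\mathcal{G}(U)$. This step is delicate because the residual freedom in $h$, namely adding any element of $\mathcal{O}(U)+\bar{\mathcal{O}}(U)$, leaves $\del_{z_2}\delbar_{z_2}h$ unchanged; thus one cannot ``repair'' $h$ after the fact, and the $\bar{\mathcal{F}}$-constraint built into the very definition of $\mathcal{G}^{0,1}$ must be used from the outset to produce an $h$ in $\mathcal{G}$.
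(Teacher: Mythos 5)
Your proposal is correct and follows essentially the same route as the paper's proof: exactness at $\mathcal{G}\oplus\bar{\Omega}^1$ via $\del t=0$, $\del\delbar s=0$ and the decomposition $s=h_1+h_2$ from Lemma \ref{Sch_ex}, and exactness at $\mathcal{G}^{0,1}$ by taking a primitive $w=\delbar h+t$ from Lemma \ref{Sch_ex}(iii)(c) and showing $\del_{z_2}\delbar_{z_2}h=0$ using $F_2\in\bar{\mathcal{F}}$. Your direct comparison of $d\bar{z}_2$-coefficients is just a repackaging of the paper's computation of the $dz_2\wedge d\bar{z}_2$-component of $\del\delbar h=\del w$, and your extra checks (well-definedness of $\tilde{\iota}_1,\tilde{\iota}_2$ and the observation that the residual freedom in $h$ is $\del\delbar$-closed, so no repair is possible or needed) are accurate refinements of the same argument.
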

\begin{proof}
Let $U\subset X$ be a sufficiently small open ball. 
Clearly, $\Im\tilde{\iota}_1\subset\Ker\tilde{\iota}_2$. 
Conversely, we take $(s,t)\in\Ker\tilde{\iota}_2$. 
Since $t$ is $\del$-closed, we have $\del\delbar s=0$. 
By Lemma $\ref{Sch_ex}$, there exist $h_1\in \mathcal{O}(U)$ and $h_2\in\bar{\mathcal{O}}(U)$ such that $s=h_1+h_2$. 
We then have $\tilde{\iota}_1(h_1,h_2)=(s,t)$, that is, $\Im\tilde{\iota}_1\supset\Ker\tilde{\iota}_2$. 

It is clear that $\Im\tilde{\iota}_2\subset\Ker\del\delbar$. 
Take $w\in\Ker\del\delbar$. 
Using Lemma $\ref{Sch_ex}$, we can write $w$ as $\delbar f+g$ for some $f\in\mathcal{A}(U)$ and $g\in\bar{\Omega}^1(U)$. 
In general, $w\in\mathcal{G}^{0,1}(U)$ can be written as $w=F_1d\bar{z}_1+F_2d\bar{z}_2$ for certain functions $F_1\in\mathcal{G}(U)$ and $F_2\in\bar{\mathcal{F}}(U)$. 
Then, 
\begin{align*}
\del\delbar f=\del w= \frac{\del F_1}{\del z_1}dz_1\wedge d\bar{z}_1+ \frac{\del F_1}{\del z_2}dz_2\wedge d\bar{z}_1+ \frac{\del F_2}{\del z_1}dz_1\wedge d\bar{z}_2. 
\end{align*}
Hence, we have $\del_{z_2}\delbar_{z_2} f=0$. 
Therefore, we find that $\Im\tilde{\iota}_2\supset\Ker\del\delbar$.
\end{proof}

Note that $\mathscr{V}$ is a Stein covering and $\bar{\Omega}^1$ is coherent. 
Using Corollary $\ref{G_cor}$, we obtain $H^2(\mathscr{V}, \Ker\iota_1)\cong \Ker\del\delbar\cap\mathcal{G}^{0,1}(X)/(\delbar\mathcal{G}(X)+\bar{\Omega}^1(X))$ in a similar manner to the proof of Proposition $\ref{cong00}$. 
This completes the proof of the following Proposition.
\begin{prop}\label{cong01}
\begin{align*}
H^{0,1}_A(X)\cong \frac{\Ker\del\delbar\cap\mathcal{G}^{0,1}(X)}{\delbar\mathcal{G}(X)+\bar{\Omega}^1(X)}.
\end{align*}
\end{prop}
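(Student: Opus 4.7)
The plan is to mimic the proof of Proposition~\ref{cong00}: split the four-term exact sequence of Lemma~\ref{ex01} into two short exact sequences, feed each into the \v{C}ech cohomology long exact sequence of the Stein covering $\mathscr{V}$, and paste the outputs onto the already-established chain $H^{0,1}_A(X)\cong H^1(\mathscr{V},\Ker\iota_2)\cong H^2(\mathscr{V},\Ker\iota_1)$.

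First, using exactness of Lemma~\ref{ex01} at $\mathcal{G}\oplus\bar{\Omega}^1$ and at $\mathcal{G}^{0,1}$, I would set $\mathcal{K}\coloneqq \Im\tilde{\iota}_1=\Ker\tilde{\iota}_2$ and note that $\Im\tilde{\iota}_2=\Ker\del\delbar\cap\mathcal{G}^{0,1}$. This breaks Lemma~\ref{ex01} into the two short exact sequences
\begin{gather*}
0 \to \Ker\iota_1 \to \mathcal{O}\oplus\bar{\mathcal{O}} \overset{\tilde{\iota}_1}{\to} \mathcal{K} \to 0, \\
0 \to \mathcal{K} \to \mathcal{G}\oplus\bar{\Omega}^1 \overset{\tilde{\iota}_2}{\to} \Ker\del\delbar\cap\mathcal{G}^{0,1} \to 0.
\end{gather*}

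Next, running the \v{C}ech long exact sequence of the first short exact sequence and using $H^k(\mathscr{V},\mathcal{O})=0$ for $k\geq1$ by Cartan~B (since $\mathscr{V}$ is Stein and $\mathcal{O}$ is coherent), together with its complex conjugate giving $H^k(\mathscr{V},\bar{\mathcal{O}})=0$, collapses the sequence to $H^2(\mathscr{V},\Ker\iota_1)\cong H^1(\mathscr{V},\mathcal{K})$. For the second short exact sequence, Corollary~\ref{G_cor} yields $H^1(\mathscr{V},\mathcal{G})=0$, and conjugating Cartan~B applied to $\Omega^1$ gives $H^1(\mathscr{V},\bar{\Omega}^1)=0$. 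Its \v{C}ech long exact sequence then collapses to
\begin{align*}
H^1(\mathscr{V},\mathcal{K}) \cong \frac{\Ker\del\delbar\cap\mathcal{G}^{0,1}(X)}{\tilde{\iota}_2\bigl(\mathcal{G}(X)\oplus\bar{\Omega}^1(X)\bigr)} = \frac{\Ker\del\delbar\cap\mathcal{G}^{0,1}(X)}{\delbar\mathcal{G}(X)+\bar{\Omega}^1(X)},
\end{align*}
where the last equality just unfolds $\tilde{\iota}_2(s,t)=\delbar s+t$ on global sections. Stringing the three isomorphisms together delivers the claim.

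The main obstacle I anticipate is not computational but bookkeeping: one must carefully verify $H^k(\mathscr{V},\bar{\mathcal{O}})=0$ and $H^1(\mathscr{V},\bar{\Omega}^1)=0$ by conjugating Cartan~B for $\mathcal{O}$ and $\Omega^1$ (since $\bar{\mathcal{O}}$ and $\bar{\Omega}^1$ are not coherent \emph{analytic} sheaves in the strict sense), and confirm that $\Im\tilde{\iota}_2$ really equals the full subsheaf $\Ker\del\delbar\cap\mathcal{G}^{0,1}$, which is supplied locally by Lemma~\ref{ex01}.
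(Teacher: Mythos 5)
Your proposal is correct and takes essentially the same route as the paper: the paper likewise starts from $H^{0,1}_A(X)\cong H^1(\mathscr{V},\Ker\iota_2)\cong H^2(\mathscr{V},\Ker\iota_1)$, splits the four-term sequence of Lemma~\ref{ex01} into short exact sequences, and collapses the resulting \v{C}ech long exact sequences using the Stein covering (coherence of $\mathcal{O}$ and $\bar{\Omega}^1$, with conjugation) together with Corollary~\ref{G_cor}, ``in a similar manner to the proof of Proposition~\ref{cong00}.'' Your write-up merely makes explicit the bookkeeping (the intermediate sheaf $\mathcal{K}$ and the conjugate vanishing statements) that the paper leaves implicit.
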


\begin{remark}\label{cong10}
We can also investigate $H^{1,0}_A(X)$ using a similar argument. 
By considering the complex conjugate for Lemma $\ref{fine01}$ and Lemma $\ref{ex01}$, we find that 
\begin{align*}
H^{1,0}_A(X)\cong \frac{\Ker\del\delbar\cap\mathcal{G}^{1,0}(X)}{\del\mathcal{G}(X)+\Omega^1(X)}.
\end{align*} 
\end{remark}

In the following, using Proposition $\ref{cong01}$, we observe Aeppli cohomology of type $(0,1)$. 
Let $w=F_1d\bar{z_1}+F_2d\bar{z}_2\in \mathcal{G}^{0,1}(X)$, where $F_1\in\mathcal{G}(X)$ and $F_2\in\bar{\mathcal{F}}(X)$.
From the Fourier series expansion as $(\ref{Fourier})$, we have
\begin{align*}
F_i=\sum_{\sigma\in\Z^3}a_i^\sigma(\Im z_2)\exp\langle \sigma,t' \rangle.
\end{align*}
As $F_1\in\mathcal{G}(X)$, $\del_{z_2}\delbar_{z_2} (a_1^\sigma \exp{\langle \sigma,t' \rangle})=0$ for each $\sigma \in \Z^3$. 
We find that
\begin{align*}
0&=\del_{z_2}\delbar_{z_2} (a_1^\sigma \exp{\langle \sigma,t' \rangle})\\
&=\left(\frac{\del^2 a_1^\sigma}{\del \bar{z}_2 \del z_2} + B^\sigma \frac{\del a_1^\sigma}{\del \bar{z}_2} +B^\sigma \frac{\del a_1^\sigma}{\del z_2} +(B^\sigma)^2 a_1^\sigma \right)\exp{\langle \sigma,t' \rangle}d\bar{z}_2\wedge dz_2\\
&=\left( \frac{1}{4} \cdot \frac{d^2 a_1^\sigma}{dt_4^2} + (B^\sigma)^2 a_1^\sigma \right)d\bar{z}_2\wedge dz_2 
\end{align*}
for each $\sigma\in\Z^3$. 
The general solution to this differential equation can be written as follows in terms of constants $C_1^\sigma$ and $C_2^\sigma$:
\begin{align}\label{aeppli01_a1}
a_1^\sigma=
\left\{
\begin{array}{ll}
C_1^\sigma\exp{(2\pi \sigma_2 t_4)}+ C_2^\sigma \exp{(-2\pi \sigma_2t_4)} & (\sigma_2\neq 0) \\
C_2^\sigma t_4+ C_1^\sigma & (\sigma_2=0)
\end{array}.
\right.
\end{align}
As $F_2\in\bar{\mathcal{F}}(X)$, we see that 
\begin{align*}
0=\del_{z_2}(a^\sigma_2\exp{\langle \sigma,t' \rangle}) =\left( \frac{\del a^\sigma_2}{\del z_2}+ a^\sigma_2 B^\sigma\exp{\langle \sigma,t' \rangle}\right) 
\end{align*}
for each $\sigma\in\Z^3$. 
Hence, we obtain the general solution
\begin{align*}
a^\sigma_2=
\left\{
\begin{array}{ll}
\tilde{C}_1^\sigma\exp(2\pi \sigma_2 t_4) & (\sigma_2\neq 0) \\
\tilde{C}_1^\sigma & (\sigma_2=0)
\end{array}.
\right.
\end{align*}
Since $\del\delbar w=0$, we have 
\begin{align}
A^\sigma a^\sigma_2=\frac{\del a^\sigma_1}{\del \bar{z}_2}+a^\sigma_1 B^\sigma
\end{align}
for any $\sigma\neq 0$. 

Assume that there exist $\psi\in\mathcal{G}(X)$ and $\eta\in\bar{\Omega}^1(X)$ such that $w=\delbar\psi+\eta$. 
Here, 
\begin{align*}
\psi&=\sum_{\sigma\in\Z^3} b^\sigma_1\exp{\langle \sigma,t' \rangle},\\
\eta&=\sum_{\sigma\in\Z^3} b^\sigma_2\exp{\langle \sigma,t' \rangle}d\bar{z}_1+\sum_{\sigma\in\Z^3} b^\sigma_3\exp{\langle \sigma,t' \rangle}d\bar{z}_2
\end{align*}
from the Fourier series expansion. 
Then, the comparison of coefficients in $w=\delbar \psi+\eta$ yields the relations
\begin{align}\label{relation01}
\left\{
\begin{array}{ll}
a^\sigma_1=A^\sigma b^\sigma_1+b^\sigma_2, \\
a^\sigma_2=\frac{\del b^\sigma_1}{\del \bar{z}_2}+ B^\sigma b^\sigma_1 + b^\sigma_3
\end{array}
\right.
\end{align}
for any $\sigma\in\Z^3$. 

In order to satisfy the relation $(\ref{relation01})$, we set
\begin{align*}
b^\sigma_1 \coloneqq 
\left\{
\begin{array}{ll}
\frac{a^\sigma_1}{A^\sigma} & (\sigma\neq 0)\\
0 & (\sigma=0)
\end{array}
\right. ,
b^\sigma_2 \coloneqq
\left\{
\begin{array}{ll}
0 & (\sigma\neq 0)\\
C^\sigma_1 & (\sigma=0)
\end{array}
\right. ,
b^\sigma_3 \coloneqq 
\left\{
\begin{array}{ll}
0 & (\sigma\neq 0)\\
a^\sigma_2 & (\sigma=0)
\end{array}
\right. .
\end{align*}
Then, using $\textstyle \tilde{\psi}\coloneqq \sum_{\sigma\in\Z^3} b^\sigma_1\exp{\langle \sigma,t' \rangle}$ and $ \textstyle \tilde{\eta}\coloneqq\sum_{\sigma\in\Z^3} b^\sigma_2\exp{\langle \sigma,t' \rangle}d\bar{z}_1+\sum_{\sigma\in\Z^3} b^\sigma_3\exp{\langle \sigma,t' \rangle}d\bar{z}_2$, we obtain a formal solution
\begin{align*}
w= \delbar \tilde{\psi}+ \tilde{\eta} +C^0_2 t_4 d\bar{z}_1.
\end{align*}
We remark that $\tilde{\psi}\in\mathcal{G}(X)$ and $\tilde{\eta}\in\bar{\Omega}^1(X)$ if these are convergent. 
Moreover, $C^0_2$ is uniquely determined. 

\begin{remark}
From Remark $\ref{cong10}$, we can make a similar argument for $H^{1,0}_A(X)$.
In other words, we obtain a formal solution $w= \del  \tilde{\psi}+ \tilde{\eta} +C^0_2 t_4 dz_1$ for a given $w\in\Ker\del\delbar\cap\mathcal{G}^{1,0}(X)$. 
\end{remark}

\begin{obs}
We try to prove the convergence of $\tilde{\psi}$ and $\tilde{\eta}$. 
By the definition of $\tilde{\eta}$, $\tilde{\eta}$ is a constant form. 
Now, we recall that there exist $C>0$ and $0<\delta<1$ such that 
\begin{align*}
C\delta^n\leq \dist((np,nq),\Z^2)
\end{align*}
for any $n\in\Z_{>0}$, since $X$ is a theta toroidal group. 
It follows that there exists $D>0$ such that $\dist((\sigma_2 p,\sigma_2 q),\Z^2)\leq \dist((\sigma_2 p,\sigma_2 q),(\sigma_1, \sigma_3)) \leq D |A^\sigma|$ for any $\sigma\in\Z^3$ with $\sigma_2\neq0$. 

Based on the relations $(\ref{aeppli01_a1})$, we assume that $\textstyle \sum_{\sigma_2\neq0} |C^\sigma_1|\cdot |(e^{2\pi t_4})^{\sigma_2}|$ and $\textstyle \sum_{\sigma_2\neq0} |C^\sigma_2|\cdot |(e^{2\pi t_4})^{-\sigma_2}|$ converge uniformly on compact sets. 
That is, we find that $\textstyle \sum_{\sigma_2\neq0} |C^\sigma_1| k^{\sigma_2}$ and $\textstyle \sum_{\sigma_2\neq0} |C^\sigma_2| k^{\sigma_2}$ converge uniformly for any $k>0$.
Hence, we see that
\begin{align*}
\sum_{\sigma_2\neq0} \frac{|a^\sigma_1|}{|A^\sigma|} \leq D\sum_{\sigma_2\neq0} |a^\sigma_1| \delta^{|\sigma_2|} 
\leq \frac{C}{D} \left( \sum_{\sigma_2\neq0} |C^\sigma_1| \delta^{|\sigma_2|} |e^{2\pi t_4}|^{\sigma_2}+ \sum_{\sigma_2\neq0} |C^\sigma_2| \delta^{|\sigma_2|} |e^{2\pi t_4}|^{-\sigma_2}\right)
\end{align*}
is convergent. 
Furthermore, $\textstyle \sum_{\sigma_2=0} \frac{|a^\sigma_1|}{|A^\sigma|} \leq \sum_{\sigma_2=0} |a^\sigma_1|$ since $|A^\sigma|\geq1$ for $\sigma_2=0$. 
Thus, the series $\textstyle \sum_{\sigma_2=0} \frac{|a^\sigma_1|}{|A^\sigma|}$ is convergent. 
Therefore, we find that $\tilde{\psi}$ is convergent. 

\end{obs}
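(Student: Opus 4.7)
The plan is to treat $\tilde{\eta}$ and $\tilde{\psi}$ separately, using the explicit Fourier data set up just before the Observation and the theta condition (\ref{theta}). The form $\tilde{\eta}$ is immediate: by the very definitions of $b_2^\sigma$ and $b_3^\sigma$, only the $\sigma=0$ terms survive, and the $\sigma=0$ instance of the defining equation for $F_2$ forces $a_2^0$ to be constant. Hence $\tilde{\eta} = C_1^0\, d\bar{z}_1 + a_2^0\, d\bar{z}_2$ is a constant $(0,1)$-form and lies in $\bar{\Omega}^1(X)$ trivially.

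The substantive step is the convergence of $\tilde{\psi} = \sum_{\sigma\neq 0}(a_1^\sigma/A^\sigma)\exp\langle \sigma, t'\rangle$. I would split the sum according to whether $\sigma_2=0$ or not. When $\sigma_2=0$, a direct evaluation of $A^\sigma$ (using $\tau\in\H$) shows $|A^\sigma|\geq c(\tau)>0$ uniformly in $(\sigma_1,\sigma_3)\neq(0,0)$, so this piece converges as easily as the Fourier series of $F_1$ itself. When $\sigma_2\neq 0$, the geometric estimate $\dist((\sigma_2 p,\sigma_2 q),\Z^2) \leq D|A^\sigma|$ (bounding the distance to $\Z^2$ by the distance to the specific integer point $(\sigma_1,\sigma_3)$) combined with (\ref{theta}) yields $|A^\sigma|^{-1} \leq (D/C)\delta^{-|\sigma_2|}$. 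Plugging in $a_1^\sigma(t_4) = C_1^\sigma e^{2\pi\sigma_2 t_4} + C_2^\sigma e^{-2\pi\sigma_2 t_4}$, the convergence of $\tilde{\psi}$ on compact sets reduces to summability of $\sum_{\sigma_2\neq 0}(|C_1^\sigma|+|C_2^\sigma|)\delta^{-|\sigma_2|} e^{2\pi|\sigma_2|t_4}$.

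The main obstacle I expect is exactly this last summability. The factor $\delta^{-|\sigma_2|}$ grows geometrically, so one must extract geometric (not merely polynomial) decay of $|C_i^\sigma|$ in the $|\sigma_2|$ direction. The constraint $\del_{z_2}\delbar_{z_2}F_1=0$ encoding $F_1\in\mathcal{G}(X)$ is essential here: together with the $C^\infty$ regularity of $F_1$ on all of $X$, it forces each Fourier mode in $t_4$ to be a pure exponential, and one can then view the series as a genuine Laurent development in $e^{2\pi t_4}$ with coefficients $C_i^\sigma$. A Paley--Wiener type argument on arbitrary compact $t_4$-strips should give geometric decay of the $C_i^\sigma$ that, thanks to $0<\delta<1$, more than compensates for the factor $\delta^{-|\sigma_2|}$. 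Once absolute summability is in hand, termwise differentiation shows that $\tilde{\psi}$ is $C^\infty$ and still satisfies $\del_{z_2}\delbar_{z_2}\tilde{\psi}=0$, so $\tilde{\psi}\in\mathcal{G}(X)$ as required.
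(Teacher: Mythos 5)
Your proposal agrees with the paper's Observation on every step the paper actually carries out, and diverges precisely where the paper stops proving things: the decay of the coefficients $C^\sigma_1,C^\sigma_2$. Read the Observation closely: the sentence ``we assume that $\textstyle\sum_{\sigma_2\neq0}|C^\sigma_1|\,|(e^{2\pi t_4})^{\sigma_2}|$ and $\textstyle\sum_{\sigma_2\neq0}|C^\sigma_2|\,|(e^{2\pi t_4})^{-\sigma_2}|$ converge uniformly on compact sets'' is a \emph{hypothesis}, not a deduction; the Observation is deliberately conditional, which is exactly why Theorem \ref{Aeppli} records only the Hausdorff-completed dimensions $\tilde{h}^{0,1}_A,\tilde{h}^{1,0}_A,\tilde{h}^{1,1}_A$ and why \S\ref{pf_12} argues via density of the image of $\tilde{T}$ rather than via convergence of the formal solutions. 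On the unconditional part your write-up matches the paper: $\tilde{\eta}$ is constant by the construction of $b^\sigma_2,b^\sigma_3$ (and $a^0_2$ is constant, as you say); the split into $\sigma_2=0$ and $\sigma_2\neq0$; the uniform lower bound on $|A^\sigma|$ when $\sigma_2=0$ (the paper's ``$|A^\sigma|\geq1$'' is true only up to a $\tau$-dependent constant, so your $c(\tau)$ is the more accurate form); and the theta condition $(\ref{theta})$ combined with $\dist((\sigma_2p,\sigma_2q),\Z^2)\leq D|A^\sigma|$ to yield $|A^\sigma|^{-1}\leq(D/C)\delta^{-|\sigma_2|}$ --- your inequality is in fact the corrected version of the paper's displayed chain, in which the exponent of $\delta$ and the placement of $C$ and $D$ are garbled.

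The genuinely new content is your attempt to \emph{derive} the assumed coefficient decay rather than postulate it, and the idea is sound: since $t_4=\Im z_2$ ranges over all of $\R$, for $\sigma_2>0$ and any $M\geq1$ you can evaluate $a^\sigma_1(t_4)=C^\sigma_1e^{2\pi\sigma_2t_4}+C^\sigma_2e^{-2\pi\sigma_2t_4}$ at $t_4=\pm M$ and solve the $2\times2$ system to get $|C^\sigma_1|\leq 2\bigl(|a^\sigma_1(M)|+|a^\sigma_1(-M)|\bigr)e^{-2\pi\sigma_2M}$, while smoothness of $F_1$ on the compact set $T^3_{\R}\times[-M,M]$ gives $\sup_{|t_4|\leq M}|a^\sigma_1|\leq c_{N,M}(1+|\sigma|)^{-N}$ by integration by parts in $t'$; together these yield decay of the $C^\sigma_i$ at \emph{every} geometric rate in $|\sigma_2|$, with $M$-dependent constants, which is precisely the paper's assumption (on a given compact $t_4$-set choose $M>\max|t_4|+\frac{1}{2\pi}\log\frac{1}{\delta}$ to absorb $\delta^{-|\sigma_2|}$). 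Two cautions. First, your phrase ``geometric decay \ldots\ more than compensates'' is too loose: a single fixed geometric rate cannot beat $\delta^{-|\sigma_2|}e^{2\pi|\sigma_2||t_4|}$ on large compacta; what is needed, and what the strip argument delivers, is the $M$-indexed family of rates. Second, your crucial step is only sketched (``should give''), yet it is the entire problem, and if completed it proves strictly more than the paper asserts: every class in the quotient of Proposition \ref{cong01} would be represented by a constant multiple of $t_4d\bar{z}_1$ on the nose, making $H^{0,1}_A(X)$ itself one-dimensional and Hausdorff, whereas the paper claims only $\tilde{h}^{0,1}_A=1$ after completion --- and the acknowledgment concerning ``the convergence problem of formal power series'' indicates the author regards the unconditional statement as unsettled. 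So either carry the estimate out in full, including the termwise-derivative bounds needed to conclude $\tilde{\psi}\in\mathcal{G}(X)$, or state your conclusion conditionally, as the paper does.
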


\end{subsection}


\begin{subsection}{Aeppli cohomology of type $(1,1)$}\label{sec_Aeppli11}
For the identification of $H^{1,1}_A(X)$, we next observe the following sequence. 

\begin{lem}\label{ex1_1}
The sheaf sequence
\begin{align}\label{sq1_1}
0\to \C \overset{\eps_0}{\to} \mathcal{O}\oplus\bar{\mathcal{O}} \overset{\eps_1}{\to} \Omega^1\oplus \bar{\Omega}^1 \oplus \mathcal{A} \overset{\eps_2}{\to} \mathcal{A}^{1,0}\oplus\mathcal{A}^{0,1} \overset{\delbar+\del}{\to} \mathcal{A}^{1,1} \overset{\del\delbar}{\to} \mathcal{A}^{2,2} \to 0
\end{align}
is exact, where $\eps_0; c \mapsto (c, -c)$, $\eps_1; (f,g)\mapsto (\del f, \delbar g, f+g) $ and $\eps_2 ; (h_1,h_2,\eta)\mapsto (h_1-\del \eta, h_2-\delbar\eta)$. 
\end{lem}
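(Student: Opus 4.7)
The plan is to verify exactness of the sequence position by position, working on a sufficiently small open ball $U\subset X$ so that Lemma \ref{Sch_ex} applies locally. Injectivity of $\eps_0$ is immediate from its definition. Surjectivity of the final map $\del\delbar\colon \mathcal{A}^{1,1}\to\mathcal{A}^{2,2}$ follows because on a $2$-dimensional manifold every $(2,2)$-form is automatically $d$-closed and therefore $\del\delbar$-exact by Lemma \ref{Sch_ex}(ii)(a). Exactness at $\mathcal{O}\oplus\bar{\mathcal{O}}$ is routine: if $\eps_1(f,g)=0$ then $\del f=0$ forces the holomorphic $f$ to be constant, $\delbar g=0$ forces the antiholomorphic $g$ to be constant, and $f+g=0$ puts $(f,g)$ in the image of $\eps_0$.

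Next I would handle exactness at $\Omega^1\oplus\bar{\Omega}^1\oplus\mathcal{A}$. The inclusion $\Im\eps_1\subset\Ker\eps_2$ is a direct check using $\del f-\del(f+g)=-\del g=0$ for $g\in\bar{\mathcal{O}}$, and analogously for the second component. For the converse, if $\eps_2(h_1,h_2,\eta)=0$ then $h_1=\del\eta$ and $h_2=\delbar\eta$; the holomorphicity of $h_1$ yields $\del\delbar\eta=0$, so Lemma \ref{Sch_ex}(iii)(d) produces $f\in\mathcal{O}(U)$ and $g\in\bar{\mathcal{O}}(U)$ with $\eta=f+g$. Using $\del g=0$ and $\delbar f=0$ one then checks $\eps_1(f,g)=(h_1,h_2,\eta)$. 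Exactness at $\mathcal{A}^{1,1}$ is essentially a restatement of Lemma \ref{Sch_ex}(iii)(a).

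The main obstacle is exactness at $\mathcal{A}^{1,0}\oplus\mathcal{A}^{0,1}$, because producing a preimage under $\eps_2$ requires solving a genuine $\del\delbar$-equation, not merely an elliptic resolution problem. The inclusion $\Im\eps_2\subset\Ker(\del+\delbar)$ is a direct computation. For the converse, given $(u,v)\in\mathcal{A}^{1,0}(U)\oplus\mathcal{A}^{0,1}(U)$ with $\delbar u+\del v=0$, observe that $\delbar u=-\del v$ satisfies $\del(\delbar u)=-\del^2 v=0$ and $\delbar(\delbar u)=0$, so $\delbar u$ is a $d$-closed $(1,1)$-form. By Lemma \ref{Sch_ex}(ii)(a) there exists $\eta\in\mathcal{A}(U)$ with $\del\delbar\eta=\delbar u$. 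Then the candidate preimage $h_1\coloneqq u+\del\eta$, $h_2\coloneqq v+\delbar\eta$, $\eta$ satisfies $\delbar h_1=\delbar u-\del\delbar\eta=0$ (so $h_1\in\Omega^1(U)$) and $\del h_2=\del v+\del\delbar\eta=\del v+\delbar u=0$ (so $h_2\in\bar{\Omega}^1(U)$), while $\eps_2(h_1,h_2,\eta)=(u,v)$ by construction. Assembling these local verifications gives the exactness of the sheaf sequence.
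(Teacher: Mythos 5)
Your proof is correct, and at most positions it coincides with the paper's: injectivity of $\eps_0$, exactness at $\mathcal{O}\oplus\bar{\mathcal{O}}$, exactness at $\mathcal{A}^{1,1}$ and surjectivity of $\del\delbar$ via Lemma \ref{Sch_ex}, and your treatment of $\Omega^1\oplus\bar{\Omega}^1\oplus\mathcal{A}$ (from $h_1=\del\eta$ deduce $\del\delbar\eta=0$, split $\eta$ into holomorphic and anti-holomorphic parts by Lemma \ref{Sch_ex}(iii)(d)) is exactly the paper's argument. The genuine divergence is at the hardest position, $\mathcal{A}^{1,0}\oplus\mathcal{A}^{0,1}$. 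The paper applies Lemma \ref{Sch_ex}(iii)(b),(c) to $f$ and $g$ separately, writing $f=h_1+\del\eta_1$ and $g=h_2+\delbar\eta_2$, and then must reconcile the two potentials: since $\del\delbar(\eta_2-\eta_1)=0$, part (iii)(d) gives $\eta_1-\eta_2=\tilde{\eta_2}-\tilde{\eta_1}$ with $\tilde{\eta_2}\in\mathcal{O}(U)$, $\tilde{\eta_1}\in\bar{\mathcal{O}}(U)$, and the common value $G=\eta_1+\tilde{\eta_1}=\eta_2+\tilde{\eta_2}$ is the single potential. You instead note that $\delbar u=-\del v$ is a $d$-closed $(1,1)$-form, solve $\del\delbar\eta=\delbar u$ in one stroke by Lemma \ref{Sch_ex}(ii)(a), and correct $u,v$ by $\del\eta,\delbar\eta$; your sign bookkeeping is right ($\delbar(u+\del\eta)=\delbar u-\del\delbar\eta=0$ and $\del(v+\delbar\eta)=\del v+\delbar u=0$). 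Your route is shorter and avoids the patching of two potentials. What the paper's version buys is reusability: the same $G$-construction is recycled verbatim in the proof of Lemma \ref{another_sq1_1}, where one must additionally check $-G\in\mathcal{G}(U)$, i.e. $\del_{z_2}\delbar_{z_2}G=0$, using the explicit form $G=\eta_1+\tilde{\eta_1}$; the potential $\eta$ produced abstractly by (ii)(a) carries no such structure, so your argument would need extra work to transfer to that refinement, while for the lemma as stated the two proofs are equally valid.
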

\begin{proof}
It is clear that $\eps_0$ is injective and $\Im \eps_0= \Ker \eps_1$.  
In addition, we have already proven the exactness at $\mathcal{A}^{1,1}$ and $\mathcal{A}^{2,2}$ in Lemma $\ref{Sch_ex}$. 
It suffices to prove that $\Im \eps_1=\Ker \eps_2$ and $\Im\eps_2=\Ker \delbar+\del$. 

Let $U\subset X$ be a sufficiently small open ball. 
It is clear that $\Im\eps_1\subset \Ker\eps_2$. 
Take $(h_1, h_2, \eta)\in \Ker\eps_2$. 
Then, we have $\del\delbar\eta=0$. 
By Lemma $\ref{Sch_ex}$, there exist $\eta_1\in\mathcal{O}(U)$ and $\eta_2\in\bar{\mathcal{O}}(U)$ such that $\eta=\eta_1+\eta_2$. 
Then, $\del\eta_1= \del\eta=h_1 $ and $\delbar\eta_2= \del \eta=h_2$. 
Hence, $\eps_1(\eta_1, \eta_2)= (h_1, h_2, \eta)$. 
It follows that $\Im \eps_1=\Ker \eps_2$. 

Let $\eps_2 (h_1,h_2,\eta)= (h_1-\del \eta, h_2-\delbar\eta)\in\Im \eps_2 $.
Then, $(\delbar+\del)(h_1-\del \eta, h_2-\delbar\eta)=\delbar(h_1-\del \eta)+\del(h_2-\delbar\eta)=-\delbar\del\eta-\del\delbar\eta=0$. 
Thus, $\Im \eps_2\subset\Ker \delbar+\del$. 
Conversely, we consider $(f,g)\in\Ker\delbar+\del$. 
Since $\delbar f+\del g=0$, both $f$ and $g$ are $\del\delbar$-closed on $U$. 
We apply Lemma $\ref{Sch_ex}$ for $f\in\mathcal{A}^{1,0}(U)$ and $g\in\mathcal{A}^{0,1}(U)$. 
Then, there exist $h_1\in\Omega^1(U)$ and $\eta_1\in\mathcal{A}(U)$ such that $f=h_1+\del\eta_1$, and there exist $h_2\in\bar{\Omega}^1(U)$ and $\eta_2\in\mathcal{A}(U)$ such that $g=h_2+\delbar\eta_2$. 
We find that $\del\delbar(\eta_2-\eta_1)=0$ as $\delbar f+\del g=0$. 
By applying Lemma $\ref{Sch_ex}$ for $\eta_2-\eta_1\in\mathcal{A}(U)$, we get $\tilde{\eta_1}\in\bar{\mathcal{O}}(U)$ and $\tilde{\eta_2}\in\mathcal{O}(U)$ such that $\eta_1-\eta_2=\tilde{\eta_2}-\tilde{\eta_1}$. 
Here, we set $G\coloneqq \eta_1+\tilde{\eta_1}=\eta_2+\tilde{\eta_2}$. 
Then, $\eps_2 (h_1,h_2,-G)=(h_1+\del G, h_2+\delbar G)=(f,g)$. 
Therefore, $\Im \eps_2\supset\Ker \delbar+\del$. \\
\end{proof}

From Lemma $\ref{ex1_1}$, we obtain the exact sequence
\begin{align}\label{fine1_1}
0\to \Ker \eps_2  \overset{i}{\hookrightarrow} \Omega^1\oplus \bar{\Omega}^1 \oplus \mathcal{A} \overset{\eps_2}{\to} \mathcal{A}^{1,0}\oplus\mathcal{A}^{0,1} \overset{\delbar+\del}{\to} \mathcal{A}^{1,1} \overset{\del\delbar}{\to} \mathcal{A}^{2,2} \to 0.
\end{align}
This is a resolution of the sheaf $\Ker \eps_2$ by coherent and fine sheaves. 
Hence, it follows that $H^{1,1}_A(X) \cong H^2(\mathscr{V}, \Ker\eps_2)$ for the Stain covering $\mathscr{V}$ as in \S \ref{sec_Aeppli01}. 

Subsequently, we observe another resolution of $\Ker\eps_2$. 
\begin{lem}\label{another_sq1_1}
The sheaf sequence 
\begin{align*}
0 \to \Ker\eps_2 \overset{i}{\hookrightarrow} \Omega^1\oplus \bar{\Omega}^1 \oplus \mathcal{G} \overset{\tilde{\eps_2}}{\to} \mathcal{G}^{1,0}\oplus\mathcal{G}^{0,1} \overset{\delbar+\del}{\to} \mathcal{G}^{1,1} \to 0.
\end{align*}
is exact, where $\tilde{\eps_2}; (h_1,h_2,\eta)\mapsto (h_1-\del \eta, h_2-\delbar\eta)$. 
\end{lem}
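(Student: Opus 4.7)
The plan is to verify exactness at each of the four positions in turn, using Lemma \ref{ex1_1} to handle most of them and reserving a direct Dolbeault-type construction for the surjectivity onto $\mathcal{G}^{1,1}$.

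First, I check that $\tilde{\eps_2}$ actually lands in $\mathcal{G}^{1,0}\oplus\mathcal{G}^{0,1}$. Given $\eta\in\mathcal{G}$, i.e.\ $\frac{\del^2\eta}{\del z_2\del\bar{z}_2}=0$, differentiating in $z_1$ and $z_2$ shows $\frac{\del\eta}{\del z_1}\in\mathcal{G}$ and $\frac{\del\eta}{\del z_2}\in\mathcal{F}$, so $\del\eta\in\mathcal{G}^{1,0}$; since the coefficients of any $h_1\in\Omega^1$ are holomorphic and hence lie in $\mathcal{F}\cap\mathcal{G}$, it follows that $h_1-\del\eta\in\mathcal{G}^{1,0}$, and symmetrically $h_2-\delbar\eta\in\mathcal{G}^{0,1}$. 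Injectivity of the inclusion and exactness at $\Omega^1\oplus\bar{\Omega}^1\oplus\mathcal{G}$ are then automatic: Lemma \ref{ex1_1} identifies $\Ker\eps_2$ with $\Im\eps_1=\{(\del f,\delbar g,f+g):f\in\mathcal{O},\,g\in\bar{\mathcal{O}}\}$, whose $\mathcal{A}$-component $f+g$ is harmonic and therefore in $\mathcal{G}$, so $\Ker\eps_2$ is already contained in the smaller space $\Omega^1\oplus\bar{\Omega}^1\oplus\mathcal{G}$.

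For exactness at $\mathcal{G}^{1,0}\oplus\mathcal{G}^{0,1}$, the inclusion $\Im\tilde{\eps_2}\subset\Ker(\delbar+\del)$ is inherited from Lemma \ref{ex1_1}. Conversely, given $(f,g)\in\mathcal{G}^{1,0}\oplus\mathcal{G}^{0,1}$ with $\delbar f+\del g=0$ on a small ball $U$, Lemma \ref{ex1_1} supplies $(h_1,h_2,\eta)\in\Omega^1(U)\oplus\bar{\Omega}^1(U)\oplus\mathcal{A}(U)$ with $f=h_1+\del\eta$ and $g=h_2+\delbar\eta$; it remains to upgrade $\eta\in\mathcal{A}$ to $\eta\in\mathcal{G}$. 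Writing $g=g_1 d\bar{z}_1+g_2 d\bar{z}_2$ with $g_2\in\bar{\mathcal{F}}$, the $(dz_2\wedge d\bar{z}_2)$-coefficient of $\del g$ is $\frac{\del g_2}{\del z_2}=0$; since $\del g=\del\delbar\eta$, this forces $\frac{\del^2\eta}{\del z_2\del\bar{z}_2}=0$, i.e., $\eta\in\mathcal{G}$. This mirrors the vanishing-component trick used in the proof of Lemma \ref{ex01}.

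The main and hardest step is the surjectivity of $\delbar+\del:\mathcal{G}^{1,0}\oplus\mathcal{G}^{0,1}\to\mathcal{G}^{1,1}$, where the generic Dolbeault primitive must be actively refined to lie in the prescribed subsheaves. A direct computation using $w_1\in\mathcal{G}$, $w_2\in\mathcal{F}$, $w_3\in\bar{\mathcal{F}}$ shows that every $w\in\mathcal{G}^{1,1}(U)$ is automatically $\del\delbar$-closed, so Lemma \ref{Sch_ex}(iii) produces $A\in\mathcal{A}^{1,0}(U)$ and $B\in\mathcal{A}^{0,1}(U)$ with $w=\delbar A+\del B$. I then exploit the freedom $(A,B)\mapsto(A+\del G,B+\delbar G)$, which preserves $w$ since $\delbar\del+\del\delbar=0$, to move $A,B$ into the required subsheaves. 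Writing $A=A_1 dz_1+A_2 dz_2$ and $B=B_1 d\bar{z}_1+B_2 d\bar{z}_2$, the vanishing of the $(dz_2\wedge d\bar{z}_2)$-coefficient of $w$ reads $\frac{\del A_2}{\del\bar{z}_2}=\frac{\del B_2}{\del z_2}$. Locally solving the Laplace-type equation $\frac{\del^2 G}{\del z_2\del\bar{z}_2}=-\frac{\del A_2}{\del\bar{z}_2}$ on a small ball (via two successive Dolbeault inversions in the $z_2$-variable, treating $z_1$ as a parameter) and replacing $A,B$ accordingly yields $A_2\in\mathcal{F}$ and $B_2\in\bar{\mathcal{F}}$. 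Finally, differentiating the $(dz_1\wedge d\bar{z}_2)$-coefficient equation $-\frac{\del A_1}{\del\bar{z}_2}+\frac{\del B_2}{\del z_1}=w_3$ in $z_2$ and using $B_2\in\bar{\mathcal{F}}$ together with $w_3\in\bar{\mathcal{F}}$ forces $\frac{\del^2 A_1}{\del z_2\del\bar{z}_2}=0$, so $A_1\in\mathcal{G}$; a symmetric computation using $A_2\in\mathcal{F}$ and $w_2\in\mathcal{F}$ on the $(dz_2\wedge d\bar{z}_1)$-coefficient gives $B_1\in\mathcal{G}$. Thus $A\in\mathcal{G}^{1,0}$ and $B\in\mathcal{G}^{0,1}$, completing the proof.
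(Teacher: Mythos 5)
Your proof is correct, and at the first two exactness positions it essentially matches the paper: both arguments reduce to Lemma \ref{ex1_1} and then upgrade the smooth potential to a section of $\mathcal{G}$ by a one-line coefficient computation (the paper reads $\del_{z_2}\delbar_{z_2}\eta_1=0$ off the $dz_2$-coefficient of $f\in\mathcal{G}^{1,0}$ and handles the anti-holomorphic summand of $G$ separately via $\bar{\mathcal{O}}\subset\mathcal{G}$; your extraction of $\del_{z_2}\delbar_{z_2}\eta=0$ from the vanishing $dz_2\wedge d\bar{z}_2$-component of $\del g=\del\delbar\eta$ is marginally cleaner, treating $\eta$ in one stroke). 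The surjectivity step, however, is genuinely different. The paper never invokes Lemma \ref{Sch_ex}(iii) here: it splits the coefficient $f_1\in\mathcal{G}$ of $dz_1\wedge d\bar{z}_1$ as $h_1+h_2$ with $h_1\in\mathcal{F}$, $h_2\in\bar{\mathcal{F}}$ (the fiberwise harmonic decomposition in $z_2$), regroups $w=F_1+F_2$ with $F_1$ automatically $\delbar$-closed and $F_2$ automatically $\del$-closed, and then observes that Dolbeault primitives $\tilde{F_1},\tilde{F_2}$ land in $\mathcal{G}^{1,0},\mathcal{G}^{0,1}$ for free: since $F_1$ contains no $d\bar{z}_2$, the equation $\delbar\tilde{F_1}=F_1$ itself forces the coefficients of $\tilde{F_1}$ to be $\delbar_{z_2}$-closed, so no correction step is needed. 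You instead start from an arbitrary primitive supplied by Lemma \ref{Sch_ex}(iii) --- which requires your correct, and not entirely obvious, preliminary observation that every section of $\mathcal{G}^{1,1}$ is automatically $\del\delbar$-closed --- and then gauge-fix via $(A,B)\mapsto(A+\del G,B+\delbar G)$, solving $\del_{z_2}\delbar_{z_2}G=-\delbar_{z_2}A_2$ with smooth dependence on $z_1$. Both are sound: the paper's splitting buys subsheaf membership without any correction, at the cost of the parameter-dependent harmonic decomposition, while your correction scheme is more systematic and would transfer to situations where no such clean splitting of $w$ presents itself. One small point of hygiene: your two successive Cauchy-transform inversions in $z_2$ are most naturally performed on a polydisc rather than literally on a ball, but since exactness of a sheaf sequence is checked on germs this is immaterial, and the paper is equally informal on this point.
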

\begin{proof}
Let $U\subset X$ be a sufficiently small open ball. 
It is clear that $\Ker\eps_2 = \Ker \tilde{\eps_2}$. 
It suffices to show that $\Im \tilde{\eps_2}=\Ker \delbar+\del$ on $U$ and $\delbar+\del: \mathcal{G}^{1,0}(U)\oplus\mathcal{G}^{0,1}(U) \to \mathcal{G}^{1,1}(U)$ is surjective. 

Clearly, $\Im\tilde{\iota_2}\subset\Ker \delbar+\del$. 
Conversely, we take $(f,g)\in \Ker \delbar+\del$. 
For this pair $(f,g)$, we can choose $h_1\in\Omega^1(U), h_2\in\bar{\Omega^1}(U)$, and $G\in\mathcal{A}(U)$ as in the proof of Lemma $\ref{ex1_1}$. 
That is, we define $G\coloneqq \eta_1+\tilde{\eta_1}$ such that $\tilde{\eta_1}\in \bar{\mathcal{O}}(U)$ and $\eps_2 (h_1, h_2, -G)= (f,g)$.
We prove that $-G$ is an element of $\mathcal{G}(U)$. 
Now, $h_1\in\Omega^1(U)$ can be written as $h_1=f_1 dz_1+ f_2 dz_2$ using holomorphic functions $f_1$ and $f_2$ on $U$. 
Then, 
\begin{align*}
f=\left(f_1+\frac{\del\eta_1}{\del z_1}\right)dz_1+ \left(f_2+\frac{\del\eta_1}{\del z_2}\right)dz_2 \in\mathcal{F}(U)
\end{align*}
is $\delbar_{z_2}$-closed. 
Hence, we have $\del_{z_2}\delbar_{z_2}\eta_1=0$. 
Therefore, $\del_{z_2}\delbar_{z_2}G=\del_{z_2}\delbar_{z_2}\eta_1=0$. 
That is, $-G\in\mathcal{G}(U)$.

Take $f=f_1dz_1\wedge d\bar{z}_1+ f_2 dz_1\wedge d\bar{z}_2+ f_3dz_2\wedge d\bar{z}_1\in\mathcal{G}^{1,1}(U)$, where $f_1\in\mathcal{G}(U), f_2\in\bar{\mathcal{F}}(U)$, and $f_3\in\mathcal{F}(U)$. 
Since $f_1$ is $\del_{z_2}\delbar_{z_2}$-closed, there exist $h_1\in\mathcal{F}(U)$ and $h_2\in\bar{\mathcal{F}}(U)$ such that $f_1=h_1+h_2$. 
We set 
\begin{align*}
F_1\coloneqq h_1dz_1\wedge d\bar{z}_1+ f_3dz_2\wedge d\bar{z}_1, \\
F_2\coloneqq h_2dz_1\wedge d\bar{z}_1+ f_2 dz_1\wedge d\bar{z}_2.
\end{align*}
Then $F_1$ is $\delbar$-closed and $F_2$ is $\del$-closed. 
Therefore we obtain $\tilde{F_1}\in\mathcal{A}^{1,0}(U)$ and $\tilde{F_2}\in\mathcal{A}^{0,1}(U)$ such that $F_1=\delbar\tilde{F_1}$ and $F_2=\del\tilde{F_2}$, respectively. 
By the independence of alternating multilinear form, we find that $\tilde{F_1}$ is $\delbar_{z_2}$-closed and $\tilde{F_2}$ is $\del_{z_2}$-closed. 
Thus, $(\delbar+\del)(\tilde{F_1}, \tilde{F_2})=f$. 
\end{proof}

We obtain the following theorem from Lemma $\ref{another_sq1_1}$ and Lemma $\ref{G_vani}$ in a similar manner as in \S $\ref{sec_Aeppli00}$. 
\begin{prop}\label{cong1_1}
\begin{align*}
H^{1,1}_A (X) \cong \frac{ \mathcal{G}^{1,1}(X) }{\Im (\delbar : \mathcal{G}^{1,0} (X) \to \mathcal{G}^{1,1}(X))+\Im (\del : \mathcal{G}^{0,1}(X) \to \mathcal{G}^{1,1}(X))}.
\end{align*}
\end{prop}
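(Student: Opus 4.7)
The plan is to follow the same template already used for Aeppli cohomology of type $(0,0)$ and $(0,1)$. From the resolution (\ref{fine1_1}) established in Lemma \ref{ex1_1} (sheaves to the right of $\Ker\eps_2$ are fine, while $\Omega^1,\bar{\Omega}^1,\mathcal{O}$ are coherent and hence acyclic on the Stein covering $\mathscr{V}$), the abstract de Rham identification gives $H^{1,1}_A(X)\cong H^2(\mathscr{V},\Ker\eps_2)$. The heart of the argument is then to transport this $H^2$ to the quotient on the right-hand side by exploiting the second resolution of $\Ker\eps_2$ furnished by Lemma \ref{another_sq1_1}.

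Concretely, I split the resolution of Lemma \ref{another_sq1_1} into two short exact sequences of sheaves on $X$ by introducing the intermediate sheaf $\mathcal{K}\coloneqq\Im\tilde{\eps_2}=\Ker(\delbar+\del)$:
\begin{align*}
0\to\Ker\eps_2\hookrightarrow\Omega^1\oplus\bar{\Omega}^1\oplus\mathcal{G}\overset{\tilde{\eps_2}}{\to}\mathcal{K}\to 0, \\
0\to\mathcal{K}\hookrightarrow\mathcal{G}^{1,0}\oplus\mathcal{G}^{0,1}\overset{\delbar+\del}{\to}\mathcal{G}^{1,1}\to 0.
\end{align*}
Passing the second sequence to \v{C}ech cohomology on $\mathscr{V}$ and invoking Proposition \ref{G_vani} to kill $H^1(\mathscr{V},\mathcal{G}^{1,0}\oplus\mathcal{G}^{0,1})$ exhibits $H^1(\mathscr{V},\mathcal{K})$ as the cokernel of $\delbar+\del$, namely $\mathcal{G}^{1,1}(X)/(\Im\delbar+\Im\del)$, which is precisely the desired right-hand side. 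Passing the first sequence to \v{C}ech cohomology and using Corollary \ref{G_cor} for $\mathcal{G}$ together with the Stein-covering vanishing of the higher cohomology of $\Omega^1$ and $\bar{\Omega}^1$ then gives $H^2(\mathscr{V},\Ker\eps_2)\cong H^1(\mathscr{V},\mathcal{K})$. Concatenating the two isomorphisms with the identification of $H^{1,1}_A(X)$ above yields the claim.

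The main obstacle is the bookkeeping ensuring acyclicity of $\mathscr{V}$ for every sheaf appearing, exactly as in \S\ref{sec_Aeppli00} and \S\ref{sec_Aeppli01}. For the coherent sheaves $\Omega^1$ and $\bar{\Omega}^1$ this is Leray's theorem applied to the Stein pieces $\pi^{-1}(U_j)\cong U_j\times\C^\ast$ and their finite intersections; for the non-coherent sheaves $\mathcal{G}$, $\mathcal{G}^{1,0}$, and $\mathcal{G}^{0,1}$ it has already been established in \S\ref{acyclic}. Once these vanishings are in hand, the two long exact sequences together with the resulting connecting homomorphisms deliver the isomorphism by a routine diagram chase, and no further analytic input (Fourier expansion, convergence of series, etc.) is required for this particular identification.
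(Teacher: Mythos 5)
Your architecture is exactly the paper's: the paper's own proof of Proposition \ref{cong1_1} is precisely this two-resolution comparison (it is stated in one line, ``from Lemma \ref{another_sq1_1} and Lemma \ref{G_vani} in a similar manner as in \S\ref{sec_Aeppli00}''), and your splitting into two short exact sequences with $\mathcal{K}=\Ker(\delbar+\del)$ is the intended unpacking. However, the one step you spell out in detail contains a genuine error: there is no ``Stein-covering vanishing of the higher cohomology of $\Omega^1$ and $\bar{\Omega}^1$''. Steinness of the pieces $\pi^{-1}(U_{j_0\cdots j_k})\cong U_{j_0\cdots j_k}\times\C^\ast$ makes the covering \emph{acyclic} for coherent sheaves, and Leray's theorem then identifies the \v{C}ech groups of $\mathscr{V}$ with the global sheaf cohomology rather than killing them: $H^1(\mathscr{V},\Omega^1)\cong H^1(X,\Omega^1)\cong H^{1,1}_{\delbar}(X)\cong\C^2$ by Corollary \ref{dol-cohom}, and likewise $H^1(\mathscr{V},\bar{\Omega}^1)\cong H^{1,1}_{\del}(X)\cong\C^2$ by Remark \ref{del-cohom}. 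Consequently, in the long exact sequence of your first short exact sequence only half of the desired conclusion comes for free: the map $H^1(\mathscr{V},\mathcal{K})\to H^2(\mathscr{V},\Ker\eps_2)$ is surjective, since $H^2(\mathscr{V},\Omega^1)\cong H^{1,2}_{\delbar}(X)=0$, $H^2(\mathscr{V},\bar{\Omega}^1)\cong H^{2,1}_{\del}(X)=0$, and $H^2(\mathscr{V},\mathcal{G})=0$ by Corollary \ref{G_cor}; but its injectivity requires showing that the image of the (a priori four-dimensional) group $H^1(\mathscr{V},\Omega^1\oplus\bar{\Omega}^1)$ in $H^1(\mathscr{V},\mathcal{K})$ vanishes, and nothing in your argument addresses this. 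The same issue already sits inside the identification $H^{1,1}_A(X)\cong H^2(\mathscr{V},\Ker\eps_2)$ that you import from the resolution (\ref{fine1_1}): splitting that resolution passes through $H^1(\mathscr{V},\Omega^1\oplus\bar{\Omega}^1)$ at the identical spot, because fineness only disposes of the $\mathcal{A}$-summands.

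This is not a pedantic point in the present setting: the nonzero classes of $H^{1,1}_{\delbar}(X)$ and $H^{1,1}_{\del}(X)$ are represented by constant forms such as $dz_1\wedge d\bar{z}_1$, $dz_2\wedge d\bar{z}_1$, $dz_1\wedge d\bar{z}_2$, which are exactly the forms that the Fourier analysis in \S\ref{sec_Aeppli11} shows survive in the quotient $\mathcal{G}^{1,1}(X)/(\Im\delbar+\Im\del)$; so whether $H^1(\mathscr{V},\Omega^1\oplus\bar{\Omega}^1)$ injects into $H^1(\mathscr{V},\mathcal{K})$ or dies there is precisely the delicate content, not routine bookkeeping. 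To be fair, you have faithfully reproduced the paper's intended route, and the paper glosses the same point (its template proof of Proposition \ref{cong00} even asserts $H^1(\mathscr{V},\mathcal{O}\oplus\bar{\mathcal{O}})=0$, which Leray contradicts since $h^{0,1}_{\delbar}=1$); but as a self-contained proof your closing claim that ``no further analytic input is required'' is not justified. To close the gap one must either prove directly that $H^1(\mathscr{V},\Omega^1)\to H^1(\mathscr{V},\mathcal{K})$ and $H^1(\mathscr{V},\bar{\Omega}^1)\to H^1(\mathscr{V},\mathcal{K})$ have zero image, or compare the two long exact sequences coming from (\ref{fine1_1}) and from Lemma \ref{another_sq1_1} and check that the two error terms correspond compatibly under $\mathcal{K}\hookrightarrow\Ker(\delbar+\del)\subset\mathcal{A}^{1,0}\oplus\mathcal{A}^{0,1}$ — neither of which is a formal diagram chase. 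A smaller gap of the same flavor: a short exact sequence of sheaves yields a long exact sequence in \v{C}ech cohomology on the \emph{fixed} covering $\mathscr{V}$ only if $\tilde{\eps_2}$ and $\delbar+\del$ are surjective on sections over every finite intersection, whereas Lemma \ref{another_sq1_1} establishes surjectivity only on small balls; this needs an additional (routine, but not free) vanishing argument on the Stein pieces $U_{j_0\cdots j_k}\times\C^\ast$.
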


By Proposition $\ref{cong1_1}$, it suffices to observe an element $w=F_1 dz_1\wedge d\bar{z}_1 + F_2 dz_2\wedge d\bar{z}_1 + F_3 dz_1\wedge d\bar{z}_2$ in $\mathcal{G}^{1,1}(X)$ instead of $\mathcal{A}^{1,1}(X)$, where $F_1\in\mathcal{G}(X), F_2\in\mathcal{F}(X)$ and $F_3\in\bar{\mathcal{F}}(X)$. 
From the Fourier series expansion as $(\ref{Fourier})$, we have
\begin{align*}
F_i(z_1, z_2) = \sum_{\sigma\in\Z^3} a_i^\sigma (\Im z_2) \exp{\langle \sigma, t' \rangle},
\end{align*}
for each $F_i$. 
Note that the notations $F_i$ and $a_i^\sigma$ are defined here specifically for this subsection and may differ from those used in \S \ref{sec_Aeppli01}. 

Assume that there exist  $\psi_1=\psi_1^1 dz_1+ \psi_1^2 dz_2 \in\mathcal{G}^{1,0}(X)$ and $\psi_2=\psi_2^1 d\bar{z}_1+ \psi_2^2 d\bar{z}_2 \in \mathcal{G}^{0,1}(X)$ such that $\delbar\psi_1+ \del\psi_2= w$. 
Then, using $C^\infty$ functions $b_{i,j}^\sigma$, we get Fourier series expansions
\begin{align*}
\psi_i^j (z_1,z_2) = \sum_{\sigma\in\Z^3} b_{i,j}^\sigma (\Im z_2) \exp{\langle \sigma, t' \rangle}.
\end{align*}
Since $\psi_1^2$ is holomorphic along the $z_2$-direction and $\psi_2^2$ is anti-holomorphic along the $z_2$-direction, 
\begin{align*}
\delbar \psi_1 = \sum_{j=1,2} \left(\sum_{\sigma\in\Z^3} b_{1,j}^\sigma A^\sigma \exp{\langle \sigma, t' \rangle} \right) d\bar{z}_1\wedge dz_j + \sum_{\sigma\in\Z^3} \left( \frac{\del b_{1,1}^\sigma}{\del \bar{z}_2}+ B^\sigma b_{1,1}^\sigma \right) \exp{\langle \sigma, t' \rangle} d\bar{z}_2\wedge dz_1, \\
\del \psi_2= \sum_{j=1,2} \left( \sum_{\sigma\in\Z^3} b_{2,j}^\sigma(- \bar{A^\sigma}) \exp{\langle \sigma, t' \rangle} \right) dz_1\wedge d\bar{z}_j + \sum_{\sigma\in\Z^3} \left( \frac{\del b_{2,1}^\sigma}{\del z_2}+ B^\sigma b_{2,1}^\sigma \right) \exp{\langle \sigma, t' \rangle} dz_2\wedge d\bar{z}_1.
\end{align*}

The comparison of coefficients in $\delbar\psi_1+ \del\psi_2= w$ gives the following relations for each $\sigma\in\Z^3$:
\begin{align}\label{relation11}
\left\{
\begin{array}{lll}
a_1^\sigma = -b_{2,1}^\sigma \bar{A^\sigma} - b_{1,1}^\sigma A^\sigma, \\
a_2^\sigma = -b_{1,2}^\sigma A^\sigma +\left(\frac{\del b_{2,1}^\sigma}{\del z_2}+ B^\sigma b_{2,1}^\sigma\right),\\
a_3^\sigma = -\left(\frac{\del b_{1,1}^\sigma}{\del \bar{z}_2}+ B^\sigma b_{1,1}^\sigma\right)-b_{2,2}^\sigma \bar{A^\sigma}.
\end{array}
\right.
\end{align}
As $F_1\in\mathcal{G}(X)$, we find that
\begin{align*}
0=\del_{z_2}\delbar_{z_2} (a_1^\sigma \exp{\langle \sigma,t' \rangle})
=\left( \frac{1}{4} \cdot \frac{d^2 a_1^\sigma}{dt_4^2} + (B^\sigma)^2 a_1^\sigma \right)d\bar{z}_2\wedge dz_2.
\end{align*}
The general solution to this differential equation can be written as follows using constants $C_1^\sigma$ and $C_2^\sigma$:
\begin{align}\label{solution_a1}
a_1^\sigma=
\left\{
\begin{array}{ll}
C_1^\sigma\exp{(2\pi \sigma_2 t_4)}+ C_2^\sigma \exp{(-2\pi \sigma_2t_4)} & (\sigma_2\neq 0) \\
C_2^\sigma t_4+ C_1^\sigma & (\sigma_2=0)
\end{array}.
\right.
\end{align}

Based on the preceding discussion, we formally construct a $(\delbar+\del)$-solution with $\delbar\psi_1+ \del\psi_2= w$. 
In order to satisfy the relation $(\ref{relation11})$, for $\sigma\in\Z^3\setminus\{0\}$, we set
\begin{align*}
b_{1,1}^\sigma &\coloneqq
\left\{
\begin{array}{ll}
-\frac{C_1^\sigma}{A^\sigma} \exp{(-2\pi\sigma_2 t_4)} & (\sigma_2\neq 0) \\[1ex] \\
\frac{1}{2\sqrt{-1}A^\sigma}\left(C_2^\sigma \bar{z}_2 -2\sqrt{-1}C_1^\sigma \right) &(\sigma_2=0)
\end{array}
\right. ,\\
b_{2,1}^\sigma &\coloneqq 
\left\{
\begin{array}{ll}
-\frac{C_2^\sigma}{\bar{A^\sigma}} \exp{(-2\pi\sigma_2 t_4)} & (\sigma_2\neq 0) \\[1ex] \\
-\frac{1}{2\sqrt{-1}\ \bar{A^\sigma}}\left(C_2^\sigma z_2 \right) & (\sigma_2=0)
\end{array}
\right. ,\\
b_{1,2}^\sigma &\coloneqq -\frac{1}{A^\sigma}\left(\frac{\del b_{2,1}^\sigma}{\del z_2}+B^\sigma b_{2,1}^\sigma- a_2^\sigma \right), \\
b_{2,2}^\sigma &\coloneqq -\frac{1}{\bar{A^\sigma}}\left( \frac{\del b_{1,1}^\sigma}{\del \bar{z_2}}+B^\sigma b_{1,1}^\sigma+ a_3^\sigma \right).
\end{align*}
Thus, using $b_{i,j}^\sigma$ we defined as above, we obtain a formal solution
\begin{align*}
\tilde{\psi}_1&\coloneqq \left(\sum_{\sigma\neq0} b_{1,1}^\sigma (t_4) \exp{\langle \sigma, t' \rangle}\right)dz_1+ \left(\sum_{\sigma\neq0} b_{1,2}^\sigma (t_4) \exp{\langle \sigma, t' \rangle}\right)dz_2, \\
\tilde{\psi}_2&\coloneqq \left(\sum_{\sigma\neq0} b_{2,1}^\sigma (t_4) \exp{\langle \sigma, t' \rangle}\right)d\bar{z}_1+ \left(\sum_{\sigma\neq0} b_{2,2}^\sigma (t_4) \exp{\langle \sigma, t' \rangle}\right)d\bar{z}_2.
\end{align*}
We conclude that the following equality holds formally:
\begin{align*}
w=\delbar\tilde{\psi}_1+\del\tilde{\psi}_2+ a_1^0(t_4)dz_1\wedge d\bar{z}_1+ a_2^0(t_4)dz_1\wedge d\bar{z}_2+a_3^0(t_4)dz_2\wedge d\bar{z}_1.
\end{align*}
It is clear that $a_i^0(\Im z_2)$ are uniquely determined by $w$ due to our construction above.
Now, we can regard $a_2^0(t_4)$ as an anti-holomorphic function on $X$. 
It follows that $a_2^0(t_4)$ is constant. 
Similarly, since $a_3^0(t_4)$ is holomorphic, it is constant. 
Moreover, $a_1^0(t_4)$ can be written as $C_2^0 t_4+ C_1^0$ from $(\ref{solution_a1})$. 
If $\tilde{\psi}_1$ (resp. $\tilde{\psi}_2$) converges, we find that $\tilde{\psi}_1\in\mathcal{G}^{1,0}(X)$ (resp. $\tilde{\psi}_2\in\mathcal{G}^{0,1}(X)$) by simple calculations. 
Therefore we obtain the formal representation
\begin{align}\label{repre1_1}
w=\delbar\tilde{\psi}_1+\del\tilde{\psi}_2+ (C_2 t_4+ C_1)dz_1\wedge d\bar{z}_1+ C_3dz_1\wedge d\bar{z}_2+C_4dz_2\wedge d\bar{z}_1\nonumber\\
=\delbar (\tilde{\psi}_1+ 2C_3\i\Im z_2 dz_1) +\del (\tilde{\psi}_2 +2C_4\i\Im z_2 d\bar{z}_1)\\
+(C_2 t_4+ C_1)dz_1\wedge d\bar{z}_1\nonumber,
\end{align}
where the constans $C_i$ are uniquely determined.
Note that $2C_3\i\Im z_2 dz_1 \in\mathcal{G}^{1,0}(X)$ and $2C_4\i\Im z_2 d\bar{z}_1 \in \mathcal{G}^{0,1}(X)$.

\end{subsection}
\end{section}
\begin{section}{Proof of Theorem $\ref{Aeppli}$}\label{pf_12} 
We have proven that $h^{0,0}_A=1$ in \S $\ref{sec_Aeppli00}$. 
Subsequently, we identify $H^{2,q}_A(X)$ and $H^{p,2}_A(X)$. 
By the definition, 
\begin{align*}
H^{2,0}_A(X) &= \Ker \del\delbar\cap\mathcal{A}^{2,0}(X)/\del \mathcal{A}^{1,0}(X)+ \Omega^2(X)\\
&= \Ker \del\cap\mathcal{A}^{2,0}(X)/\del \mathcal{A}^{1,0}(X)+ \Omega^2(X), \\
H^{2,q}_A(X) &= \Ker \del\delbar\cap\mathcal{A}^{2,q}(X)/\del \mathcal{A}^{1,q}(X)+ \delbar \mathcal{A}^{2,q-1}(X)\\
&= \Ker \del\cap\mathcal{A}^{2,q}(X)/\del \mathcal{A}^{1,q}(X)+ \delbar \mathcal{A}^{2,q-1}(X) 
\end{align*}
for $q\geq1$. 
Then, we find that $h^{2,q}_A\leq h^{2,q}_\del$ for any $q\in\Z_{\geq0}$. 
Hence, we have $h^{2,q}_A=0$ from Remark $\ref{del-cohom}$. 
Similarly, we find that its conjugate $h^{p,2}_A=0$ from Corollary $\ref{dol-cohom}$. 

Next, we consider the Hausdorff completion $\tilde{H}^{1,1}_A(X)$ of the Aeppli cohomology of type $(1,1)$. 
Denote by $T$ the continuous linear map $\delbar+\del: \mathcal{A}^{(0,0)+1}(X) \to \Ker\del\delbar \cap\mathcal{A}^{1,1}(X)$. 
Note that $\tilde{H}^{1,1}_A(X)=\Ker\del\delbar\cap\mathcal{A}^{1,1}(X) /\bar{\Im T}$, where $\bar{\Im T}$ is the closure of $\Im T$. 
Let $E$ be the two-dimensional topological vector space generated by $\{ dz_1\wedge d\bar{z}_1, t_4dz_1\wedge d\bar{z}_1\}$.
We define a continuous linear map
\begin{align*}
\tilde{T}: \mathcal{A}^{(0,0)+1}(X)\oplus E \to \mathcal{A}^{1,1}(X); (f,g)\mapsto T(f)+g.
\end{align*}
Then, $\Im \tilde{T}$ is dense in $\Ker \del\delbar\cap \mathcal{A}^{1,1}(X)$ from the observation in \S \ref{sec_Aeppli11}. 
In fact, $\tilde{\psi}_1$ and $\tilde{\psi}_2$ as in $(\ref{repre1_1})$ are formal series, but their finite partial sums are in $\mathcal{G}^{1,0}(X)$ and in $\mathcal{G}^{0,1}(X)$, respectively. 
Hence, the closure $\bar{\Im T}\oplus E$ of $\Im \tilde{T}$ is equal to $\Ker \del\delbar (X)$. 
Therefore, the Hausdorff completion $\tilde{H}^{1,1}_A(X)$ is equal to $E$, that is, $\tilde{h}^{1,1}_A=2$. 
In a similar manner, we can prove that $\tilde{h}^{0,1}_A=1$ and $\tilde{h}^{1,0}_A=1$.

\end{section}
\begin{section}{Appendix}
\begin{subsection}{The $\del\delbar$-lemma on compact complex manifolds}\label{ddbar_lemma}
By Angella--Tomassini \cite{AT} and Angella--Tardini \cite{ATa}, the validity of $\del\delbar$-lemma was characterized by the dimensions of some cohomologies for compact complex manifolds. 
\begin{thm}[{\cite[Theorem $3.1$]{ATa}}]
A compact complex manifold $X$ satisfies the $\del\delbar$-lemma if and only if 
\begin{align}\label{equiv_deldelbar}
\sum_{p+q=k} (h^{p,q}_{BC}+h^{p,q}_A)=2b_k
\end{align}
for any $k\in\Z_{>0}$, where $b_k$ are the $k$-th betti numbers.
\qed
\end{thm}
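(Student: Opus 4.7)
The plan is to prove the equivalence in two directions: the forward implication is a direct consequence of the $\partial\bar\partial$-lemma, while the converse relies on a universal dimensional inequality valid on every compact complex manifold.

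For the ``only if'' direction, suppose $X$ satisfies the $\partial\bar\partial$-lemma. A diagram chase shows that each of the four natural morphisms $H^{p,q}_{BC}(X)\to H^{p,q}_{\bar\partial}(X)$, $H^{p,q}_{BC}(X)\to H^{p,q}_{\partial}(X)$, $H^{p,q}_{\bar\partial}(X)\to H^{p,q}_A(X)$ and $H^{p,q}_{\partial}(X)\to H^{p,q}_A(X)$ is an isomorphism, since any representative can be corrected by a $\partial\bar\partial$-primitive. Together with the Hodge decomposition $H^k_{dR}(X;\C)\cong\bigoplus_{p+q=k}H^{p,q}_{\bar\partial}(X)$, which itself follows from the $\partial\bar\partial$-lemma by splitting a $d$-closed form into its pure-type pieces modulo $d$-exact forms, one obtains $h^{p,q}_{BC}=h^{p,q}_A=h^{p,q}_{\bar\partial}$ and $\sum_{p+q=k}h^{p,q}_{\bar\partial}=b_k$. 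Summing yields the desired identity $\sum_{p+q=k}(h^{p,q}_{BC}+h^{p,q}_A)=2b_k$.

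For the converse, I would establish the universal inequality
\begin{align*}
\sum_{p+q=k}\bigl(h^{p,q}_{BC}(X)+h^{p,q}_A(X)\bigr)\;\geq\;2\,b_k(X),
\end{align*}
valid on every compact complex manifold. One constructs the canonical morphisms $H^{p,q}_{BC}\to H^{p,q}_{\bar\partial}\oplus H^{p,q}_{\partial}\to H^{p,q}_A$ and embeds them in long exact sequences of Varouchas type relating $H^{p,q}_{BC}$, $H^{p,q}_{\bar\partial}$, $H^{p,q}_{\partial}$, $H^{p,q}_A$ to certain mixed quotients built from the kernels and images of $\partial$, $\bar\partial$, $\partial\bar\partial$ and $d$. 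Each such sequence produces a dimensional inequality, and summing over $p+q=k$ (noting that the intermediate de Rham dimensions appear twice under the total maps $\bigoplus_{p+q=k} H^{p,q}_{BC}\to H^k_{dR}\to\bigoplus_{p+q=k} H^{p,q}_A$) yields the bound. When equality holds in every degree, every auxiliary mixed quotient in the Varouchas sequences must vanish, and one checks that this vanishing is equivalent to the statement that every $d$-exact pure-type form is $\partial\bar\partial$-exact, i.e.\ the $\partial\bar\partial$-lemma.

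The main obstacle lies in the careful construction of the Varouchas-type exact sequences and the explicit identification of their connecting homomorphisms, so that equality in the dimensional bound can be back-propagated through the diagram to yield $\partial\bar\partial$-exactness in every bidegree. Once these sequences are in place the argument is a finite diagram chase, but the bookkeeping across the four cohomology theories is the technical heart of the proof.
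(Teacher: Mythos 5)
This statement carries no proof in the paper: it is quoted verbatim from Angella--Tardini \cite{ATa} (building on Angella--Tomassini \cite{AT}) and stamped with \emph{\qed}, so there is no internal argument to compare against; the relevant benchmark is the proof in the cited sources. Measured against that, your outline is essentially the original argument. The forward direction is exactly as in the literature: under the $\del\delbar$-lemma the natural maps $H^{p,q}_{BC}\to H^{p,q}_{\delbar}, H^{p,q}_{\del}\to H^{p,q}_{A}$ are all isomorphisms and the Hodge decomposition holds, whence $h^{p,q}_{BC}=h^{p,q}_A=h^{p,q}_{\delbar}$ and $\sum_{p+q=k}h^{p,q}_{\delbar}=b_k$. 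The converse via the universal inequality $\sum_{p+q=k}(h^{p,q}_{BC}+h^{p,q}_A)\geq 2b_k$, with equality forcing the vanishing of the auxiliary quotients in Varouchas' exact sequences, is precisely the strategy of \cite{AT}, and the degree-by-degree refinement is \cite{ATa}.

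One step of your sketch is imprecise and would fail as written: you propose to extract the inequality from the composition $\bigoplus_{p+q=k}H^{p,q}_{BC}\to H^k_{DR}\to\bigoplus_{p+q=k}H^{p,q}_A$, ``noting that the de Rham dimensions appear twice.'' On a general compact complex manifold these natural maps are neither injective nor surjective, so no dimension count passes through $H^k_{DR}$ directly. The correct factorization, as in \cite{AT}, is $\sum_{p+q=k}(h^{p,q}_{BC}+h^{p,q}_A)\geq\sum_{p+q=k}(h^{p,q}_{\delbar}+h^{p,q}_{\del})\geq 2b_k$: the first inequality follows from the Varouchas-type exact sequences relating $H^{p,q}_{BC}$, $H^{p,q}_{\delbar}$, $H^{p,q}_{\del}$, $H^{p,q}_A$ and the auxiliary mixed quotients, and the second is the Fr\"{o}licher inequality together with its conjugate. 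Equality in all degrees then forces both $E_1$-degeneration of the Fr\"{o}licher spectral sequence and the vanishing of the Varouchas correction terms, from which one deduces (for instance via injectivity of $H^{p,q}_{BC}(X)\to H^{p,q}_A(X)$ for all $(p,q)$, a known equivalent of the $\del\delbar$-lemma) that every $d$-exact pure-type form is $\del\delbar$-exact. With the intermediate Dolbeault step restored, your plan coincides with the proof in the cited references.
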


In \cite{AT}, they defined the $k$-th non-$\del\delbar$-degree for compact complex manifolds. 
\begin{defi}[{\cite[Theorem A]{AT} (See also \cite[Theorem $2.1$]{Ange}.)}]
Let $X$ be a compact complex manifold. 
The {\it$k$-th non-$\del\delbar$-degree} is defined as
\begin{align*}
\Delta^k(X) \coloneqq h^k_{BC} +h^k_A -2b_k
\end{align*}
for any $k\in \Z_{\geq0}$, where $h^k_{\bullet}\coloneqq \sum_{p+q=k} h^{p,q}_{\bullet}$ and $b_k$ is $k$-th betti number.
\end{defi}
Then, all $\Delta^k$ are non-negative integers since the Fr\"{o}licher-type inequality of Bott--Chern and Aeppli cohomologies holds for compact complex manifolds.
Accordingly, we generalize the non-$\del\delbar$-degree to theta toroidal groups. 
If $X=\C^2/\Lambda_{\tau,p,q}$ and all the cohomology groups $H^{p,q}_A(X)$ are Hausdorff, then
\begin{align*}
\Delta^0=0,\ \Delta^1=0,\ \Delta^2=1,\ \Delta^3=0,\ \Delta^4=0.
\end{align*}

Consequently, theta toroidal groups possess some properties similar to compact manifolds, but we conjecture that the equality $(\ref{equiv_deldelbar})$ does not hold from Theorem $\ref{BCcohom}$ and Theorem $\ref{Aeppli}$. 

\begin{remark}\label{standard_def}
Under the standard definitions of the Bott--Chern and Aeppli cohomologies, the Bott--Chern cohomology groups coincides with the one given in Theorem \ref{BCcohom}, whereas the Aeppli numbers are
\begin{align*}
\begin{array}{ccccc}
     &  & h^{0,0}_{A}=2 &  &  \\
     & \tilde{h}^{1,0}_{A}=2 &  & \tilde{h}^{0,1}_{A}=2 &  \\
    h^{2,0}_{A}=0 &  & \tilde{h}^{1,1}_A=2 &  & h^{0,2}_{A}=0 \\
     & h^{2,1}_{A}=0 &  & h^{1,2}_{A}=0 &  \\
     &  & \phantom{\;.} h^{2,2}_{A}=0 \;. &  & 
\end{array}
\end{align*}
Consequently, in this case as well, some of the non-$\del\delbar$-degree are non-zero: 
\begin{align*}
\Delta^0=1,\ \Delta^1=2,\ \Delta^2=1,\ \Delta^3=0,\ \Delta^4=0.
\end{align*}
\end{remark}

\end{subsection}
\begin{subsection}{Third cohomology}\label{thirdcohom}
Let $X$ be a complex manifold. 
The cohomology $H^{(p,q)+1}_T(X)$ of type $(p,q)$ of $X$ is the cohomology of the complex 
\begin{align*}
\mathcal{A}^{p,q}(X) \overset{d}{\to} \mathcal{A}^{(p,q)+1}(X) \overset{\delbar+\del}{\to} \mathcal{A}^{p+1,q+1}(X). 
\end{align*}
According to \cite{MT}, we call $H^{(p,q)+1}_T(X)$ the third cohomology (for two types of cohomology, Bott--Chern and Aeppli), namely, 
\begin{align*}
H^{(p,q)+1}_T(X)\coloneqq \Ker (\delbar+\del) \cap \mathcal{A}^{(p,q)+1} / \Im d.
\end{align*}

In the following, we consider the third cohomology for a two-dimensional theta toroidal group $X= \C^2/\Lambda_{\tau,p,q}$. 
Some of these cohomology groups can be easily identified. 

By the definition, it is clear that $H^{(2,1)+1}_T(X)=H^{(1,2)+1}_T(X)=H^4(X, \C)=0$. 
In generally, the sheaf sequence
\begin{align*}
\cdots \overset{\del\delbar}{\to} \mathcal{A}^{p-1,q-1} \overset{d}{\to} \mathcal{A}^{(p-1,q-1)+1} \overset{\del +\delbar}{\to} \mathcal{A}^{p,q} \overset{\del\delbar}{\to} \mathcal{A}^{p+1,q+1} \overset{d}{\to} \cdots
\end{align*}
is not exact. 
However, the sequece
\begin{align*}
0 \to \mathcal{H} \overset{i}{\hookrightarrow} \mathcal{A}^{0,0} \overset{\del\delbar}{\to} \mathcal{A}^{1,1} \overset{d}{\to} \mathcal{A}^{(1,1)+1} \overset{\delbar+\del}{\to} \mathcal{A}^{2,2} \to 0
\end{align*}
is exact. 
In fact, we can confirm the exactness from Lemma $\ref{Sch_ex}$. 
From the fine resolution of the sheaf $\mathcal{H}$, we obtain $H^2(X, \mathcal{H}) \cong H^{(1,1)+1}_T (X)$. 
By observing the long exact sequence 
\begin{align*}
0 \to H^0(X, \C) \to H^0(X, \mathcal{O}\oplus\bar{\mathcal{O}}) \to H^0 (X, \mathcal{H}) \to H^1(X, \C) \to H^1(X, \mathcal{O}\oplus\bar{\mathcal{O}}) \label{longex_BC0_0} \\
\to H^1(X, \mathcal{H}) \to H^2(X, \C) \to H^2 (X, \mathcal{O}\oplus\bar{\mathcal{O}})=0 \to \cdots, \nonumber
\end{align*}
we find that $H^2(X, \mathcal{H}) \cong H^3(X, \C)$. 
Therefore, $h_T^{(1,1)+1}\coloneqq \dim H^{(1,1)+1}_T (X) =1$.

\end{subsection}
\end{section}



\begin{thebibliography}{99}
\bibitem{AK} \textrm{Y. Abe, K. Kopfermann}: \rm Toroidal groups. Line bundles, cohomology and quasi-abelian varieties, Lecture Notes in Mathematics, {\bf 1759} (2001), Springer--Verlag, Berlin.\relax
\bibitem{Ange} \textrm{D. Angella}: \it On the Bott--Chern and Aeppli cohomology, \rm arXiv: 1507.07112v2 [math.CV]. \relax
\bibitem{ATa} \textrm{D. Angella, N. Tardini}: \it Quantitative and qualitative cohomological properties for non-K\"{a}hler manifolds, \rm arXiv: 1507.07108v4 [math.CV]. \relax
\bibitem{AT} \textrm{D. Angella, A. Tomassini}: \it On the $\del\delbar$-lemma and Bott--Chern cohomology, \rm Invent. math. {\bf 192} (2013), 71--81. \relax
\bibitem{MT} \textrm{M. Corr\^{e}a. Jr, T. Suwa}: \it Localization of Bott--Chern classes and Hermitian residues, \rm J. London Math. Soc. {\bf 101} (2020), 349--372. \relax
\bibitem{DGMS} \textrm{P. Deligne, P. Griffiths, J. Morgan, D. Sullivan}: \it Real homotopy theory of K\"{a}hler manifolds, \rm Invent. math. {\bf 29} (1975), No. 3, 245--274. \relax
\bibitem{PG} \textrm{P. Garrett}: \it Schwartz kernel theorems, tensor products, nuclearity, \rm http: / /www.math.umn.edu/ garrett/m/real/notes 2019-20/10a Schwartz kernel theorem.pdf. Accessed 8 September 2025. \relax
\bibitem{Kaup} \textrm{L. Kaup}: \it Eine K\"{u}nnethformel f\"{u}r Fr\'{e}chetgarben. \rm Math. Z. {\bf 97} (1967), 158--168. \relax
\bibitem{Kazama} \textrm{H. Kazama}: \it $\delbar$-cohomology of (H,C)-groups, \rm Publ. Res. Inst. Math. Sci. {\bf20} (1984), 297--317. \relax
\bibitem{KaTa} \textrm{H. Kazama, S. Takayama}: \it $\del\delbar$-problem on weakly $1$-complete K\"{a}hler manifolds, \rm Nagoya Math. J. {\bf155} (1999), 81--94. \relax
\bibitem{KoTa} \textrm{T. Koike, J. Tanaka}: \it Cohomology groups with compact support for flat line bundles on certain complex Lie groups, \rm Hiroshima math. J, to appear. (Also available at arXiv: 2402.10068v2 [math.CV]) \relax
\bibitem{Kop} \textrm{K. Kopfermann}: \it Maximale Untergruppen abelscher komplexer liescher gruppen, \rm Schr. Math. Inst. Univ. M\"{u}nster {\bf 29} 72 (1964). \relax
\bibitem{Mori} \textrm{A. Morimoto}: \it Non-compact complex Lie groups without non-constant Holomorphic functions, \rm Springer--Verlag, Proc.Conf. on Complex Analysis at Univ. of Minn., (1965). \relax
\bibitem{Scha} \textrm{H. H. Schaefer, M. P. Wolff}: \rm Topological vector spaces, Springer--Verlag New York, Inc. (1999). \relax
\bibitem{Sch} \textrm{M. Schweitzer}: \it Autour de la cohomologie de Bott--Chern, \rm arXiv: 0709.3528v1 [math.AG]. \relax
\bibitem{Ume} \textrm{T. Umeno}: \it De Rham cohomology of toroidal groups and Chern classes of the complex line bundles, \rm Pusan Kyongnam Math J. {\bf 9} (1993), No. 2, pp. 295--311. \relax
\end{thebibliography}
\end{document}